\definecolor{myblue}{rgb}{0,0,0.6}         % used for links in hyperref
\definecolor{gray}{rgb}{0.5,0.5,0.5}
\definecolor{amcol}{rgb}{0.8,0,0}
\newcommand{\am}[1]{{\color{amcol}{#1}}}
\newcommand*{\der}[2]{\frac{\partial #1}{\partial #2}}
\newcommand{\di}{\,\mathrm{d}}                              % for integrals
\newcommand{\iin}{\;\text{in}\;}
\newcommand{\oon}{\;\text{on}\;}
\newcommand{\deO}{{\partial\Omega}}
\newcommand*{\conj}[1]{\overline{#1}}
\newcommand*{\N}[1]{\left\|#1\right\|}
\newcommand*{\abs}[1]{\left|#1\right|}
\newcommand{\Tnorm}[1]{|||#1|||}
\newcommand*{\jmp}[1]{[\![#1]\!]}                     % jump
\newcommand*{\mvl}[1]{\{\!\!\{#1\}\!\!\}}             % mean value
\newcommand{\curl} {\mathop{\rm curl}\nolimits}
\newcommand{\dive} {\mathop{\rm div}\nolimits}
\def\div{\mathop{\rm div}\nolimits}
\DeclareMathOperator{\diam}{diam} %
\DeclareMathOperator{\spn}{span}
\newcommand{\uu}[1]{\hbox{\boldmath$#1$}}            % bf also for greek letters
\newcommand{\Uu}[1]{{\mathbf{#1}}}                   % bf which scales size for latin letters
\newcommand{\IC}{\mathbb{C}}
\newcommand{\IN}{\mathbb{N}}\newcommand{\IP}{\mathbb{P}}
\newcommand{\IR}{\mathbb{R}}
\newcommand{\IT}{\mathbb{T}}\newcommand{\IU}{\mathbb{U}}
\newcommand{\IW}{\mathbb{W}}
\newcommand{\ba}{{\Uu a}}\newcommand{\bb}{{\Uu b}}
\newcommand{\bd}{{\Uu d}}\newcommand{\be}{{\Uu e}}%\newcommand{\bf}{{\Uu f}}
\newcommand{\bn}{{\Uu n}}
\newcommand{\bu}{{\Uu u}}
\newcommand{\bx}{{\Uu x}}
\newcommand{\by}{{\Uu y}}
\newcommand{\bF}{{\Uu F}}
\newcommand{\bM}{{\Uu M}}\newcommand{\bN}{{\Uu N}}
\newcommand{\bT}{{\Uu T}}
\newcommand{\bV}{{\Uu V}}\newcommand{\bX}{{\Uu X}}
\newcommand{\bY}{{\Uu Y}}
\newcommand{\bzeta}{{\uu\zeta}}         %\newcommand{\bEta}{{\uu \eta}} 
\newcommand{\bsigma}{{\boldsymbol \sigma}}\newcommand{\btau}{{\boldsymbol\tau}}
\newcommand{\bxi}{{\boldsymbol\xi}}	
\newcommand{\bPhi}{{\boldsymbol \Phi}}      \newcommand{\bPsi}{{\uu \Psi}}
           \newcommand{\bzero}{\Uu{0}}
\newcommand{\calA}{{\mathcal A}}
\newcommand{\calE}{{\mathcal E}}\newcommand{\calF}{{\mathcal F}}
\newcommand{\calJ}{{\mathcal J}}
\newcommand{\calL}{{\mathcal L}}
\newcommand{\calO}{{\mathcal O}}
\newcommand{\calS}{{\mathcal S}}\newcommand{\calT}{{\mathcal T}}
\newcommand{\malpha}{{\boldsymbol{\alpha}}}\newcommand{\mbeta} {{\boldsymbol{\beta}}}
\newcommand{\Hdiv}{H(\div;\Omega)}
\newtheorem{theorem}{Theorem}[section]
\newtheorem{lemma}[theorem]{Lemma}
\newtheorem{prop}[theorem]{Proposition}
\newtheorem{proposition}[theorem]{Proposition}
\newtheorem{cor}[theorem]{Corollary}
\newtheorem{rem}[theorem]{Remark}
\newtheorem{remark}[theorem]{Remark}
\newtheorem{assum}[theorem]{Assumption}
\newcommand{\hh}{{\tt h}}%\def\hh{{\tt h}}
\newcommand{\tta}{{\tt a}}
\newcommand{\ttb}{{\tt b}}
\newcommand{\ttc}{{\tt c}}
\newcommand{\supp}{\operatorname{supp}}
\newcommand{\OO}{{(\Omega)}}
\newcommand{\Rn}{{(\IR^n)}}
\newcommand{\ee}{{\rm e}}
\newcommand{\ri}{{\rm i}}
\newcommand{\beq}{\begin{equation}}      \newcommand{\eeq}{\end{equation}}
\newcommand{\beqs}{\begin{equation*}}    \newcommand{\eeqs}{\end{equation*}}
\newcommand{\bit}{\begin{itemize}}       \newcommand{\eit}{\end{itemize}}
\newcommand{\ben}{\begin{enumerate}}     \newcommand{\een}{\end{enumerate}}
\newcommand{\bal}{\begin{align}}         \newcommand{\eal}{\end{align}}
\newcommand{\bals}{\begin{align*}}       \newcommand{\eals}{\end{align*}}
\newcommand{\bse}{\begin{subequations}}	 \newcommand{\ese}{\end{subequations}}
\newcommand{\bpr}{\begin{proposition}}   \newcommand{\epr}{\end{proposition}}
\newcommand{\bre}{\begin{remark}}        \newcommand{\ere}{\end{remark}}
\newcommand{\bpf}{\begin{proof}}         \newcommand{\epf}{\end{proof}}
\newcommand{\ble}{\begin{lemma}}         \newcommand{\ele}{\end{lemma}}
\newcommand{\bco}{\begin{corollary}}     \newcommand{\eco}{\end{corollary}}
\newcommand{\bex}{\begin{example}}       \newcommand{\eex}{\end{example}}
\newcommand{\bth}{\begin{theorem}}       \newcommand{\enth}{\end{theorem}}
\newcommand{\Fh}{\calF_h}
\newcommand{\Th}{{(\calT_h)}}
\newcommand{\deK}{{\partial K}}
\newcommand{\GD}{{\Gamma_D}}
\newcommand{\GN}{{\Gamma_N}}
\newcommand{\GR}{{\Gamma_R}}
\newcommand{\FT}{{\Fh^T}}
\newcommand{\FO}{{\Fh^0}}
\newcommand{\FD}{{\Fh^D}}
\newcommand{\FN}{{\Fh^N}}
\newcommand{\FR}{{\Fh^R}}
\newcommand{\Fspa}{{\Fh^{\mathrm{space}}}}
\newcommand{\Ftime}{{\Fh^{\mathrm{time}}}}
\newcommand{\deKspa}{{\partial^{\mathrm{space}}K}}
\newcommand{\deKtime}{{\partial^{\mathrm{time}}K}}
\newcommand{\hp}{_{hp}}
\newcommand\Vhp{v\hp}
\newcommand\Shp{\bsigma\hp}
\newcommand\hVhp{\widehat v\hp}
\newcommand\hShp{\widehat \bsigma\hp}
\newcommand{\Cstab}{M_{\mathrm{stab}}}
\newcommand{\tCstab}{\widetilde{M}_{\mathrm{stab}}}
\newcommand*{\Norm}[1]{\left\|#1\right\|}
\newcommand{\DG}{_{\mathrm{DG}}}
\newcommand{\DGp}{_{\mathrm{DG^+}}}
\newcommand{\DGQ}{_{\mathrm{DG}(Q_\Sigma)}}
\newcommand{\DGQp}{_{\mathrm{DG}^+\!(Q_\Sigma)}}
\newcommand{\tht}{\vartheta}
\newcommand{\dersec}[2]{{\frac{\partial^2 #1}{\partial{#2}^2}}}
\newcommand{\LtQ}{{L^2(Q)}}
\newcommand{\dt}[1]{{\mathrm{d}_{#1}}}
\newcommand{\dH}[1]{{\mathrm{H}_{#1}}}
\newcommand{\bVp}{{\bV_p}}
\newcommand{\ITp}{{\IT_p}}
\newcommand{\IWp}{{\IW_p}}
\definecolor{ipcol}{rgb}{0,0,0.9}
\renewcommand{\am}[1]{{#1}}
\title{A space--time Trefftz discontinuous Galerkin method for the \am{acoustic wave equation in} first-order \am{formulation}}
\author{Andrea Moiola\thanks{Department of Mathematics and Statistics, University of Reading, Whiteknights PO Box 220, Reading RG6 6AX, UK.
Current address: Department of Mathematics, University of  Pavia, 27100 Pavia, Italy (\texttt{andrea.moiola@unipv.it})},
Ilaria Perugia\thanks{Faculty of Mathematics, University of Vienna,
  1090 Vienna, Austria, and Department of Mathematics, University of
  Pavia, 27100 Pavia, Italy (\texttt{ilaria.perugia@univie.ac.at})}, 
}
\date{\today}
\begin{document}

\maketitle

\begin{abstract}
We introduce a space--time Trefftz discontinuous Galerkin method for
the first-order transient acoustic wave equations in arbitrary space
dimensions, extending the \am{one dimensional} scheme of 
Kretzschmar \textit{et al.} (2016, \textit{IMA J.\ Numer.\ Anal.}, 36, 1599--1635).
Test and trial discrete functions are space--time piecewise polynomial solutions of the wave equations.
We prove well-posedness and a priori error bounds in both skeleton-based 
and mesh-independent norms.
The space--time formulation corresponds to an implicit time-stepping scheme, if posed on meshes partitioned in time slabs, or to an explicit scheme, if posed on ``tent-pitched'' meshes.
We describe two Trefftz polynomial discrete spaces, introduce bases for them and prove optimal, high-order $h$-convergence bounds.

\medskip\noindent
\textbf{AMS subject classification}: 65M60, 65M15, 41A10, 41A25,
35L05.

\medskip\noindent
\textbf{Keywords}: Space--time finite elements, Trefftz basis functions, discontinuous Galerkin methods,  wave propagation, {\em a priori} error analysis, approximation estimates.
\end{abstract}
 
\section{Introduction}
%
% \amnote{---Titles: was FE and not DG as usual, any reason? Happy to use DG acronym.\\
% Space--time Trefftz discontinuous Galerkin methods for the first-order formulation of the acoustic wave eq.\\
% Space--time Trefftz discontinuous Galerkin methods for the acoustic wave first-order system \\
% Space--time Trefftz discontinuous Galerkin methods for the acoustic wave equation
% \\
%---Should we change name of $\IS_p$ to something else? ($\IV,\IW,\IY$...?) 
%Just because at some point we might want to use $\IS^{n-1}$ for unit spheres and directions.
%\ip{I changed $\IS_p$ into $\IV_p$ (then we have $\IT$, $\IU$, and $\IV$).}
%Changed again to $\IW$, like it slightly more ($\IV$ seems a bit too general, W reminds waves...), but happy to go back.
%\\
%---conclusions? no...
% \\ \ip{
% To be quoted:
% \begin{itemize}
% % \item usual people: Sascha\&Co., Farhat, Banjai (anything new?)
% \item van der Vegt (only nonlin. water waves??)
% \item people from Prague (NO: only fluids and transport equation)
% \item people invited at the Workshop in Linz?
% \item you were talking about a workshop in Oberwolfach
% \item Carstensen is working on space--time DPG
% % \item mention tent-pitching?
% \end{itemize} }
% Not sure we need to cite them, how are they related?
%\\
%---Say somewhere ``Numerical experiments will be presented elsewhere...'' to avoid request from referees? Might be counterproductive...
%\\
%---Another advantage wrt \cite{BGL2016}: we don't have terms on n-1 dimensional skeleton, probably not worth saying.
%}
Standard finite element methods seek to approximate a particular solution of a partial differential equation (PDE) by piecewise polynomials.
To enhance accuracy and efficiency, in the case of linear homogeneous problems, a natural idea is to choose the approximating functions from a class of (piecewise) solutions of the same PDE: this is the idea at the heart of Trefftz methods, which are named after the seminal work \cite{Trefftz1926} of E.~Trefftz.
In the last decades Trefftz schemes have been used for several different linear, most often elliptic, PDEs; see e.g.\ \cite{Qin05,LLHC08}.
%{Qin05,Zie97,LLHC08,KK95}.
% Qin05 good - LLHC08 book only elliptic - KK95 and Zie97 not much
Trefftz methods turned out to be particularly effective, and popular, for wave-propagation problems in time-harmonic regime at medium and high frequencies, where the oscillatory nature of the solutions makes standard methods computationally too expensive; see the recent survey \cite{TrefftzSurvey} and references therein.

Much less work has been devoted to Trefftz methods for time-dependent (linear) wave phenomena, see in particular
%\cite{Maciag05,WTF14,PFT09,KSTW2014,EKSW15,EKSTWtransparent,KretzschmarPhD,LiK16,BGL2016,SpaceTimeTDG}
\am{\cite{Maciag05,WTF14,PFT09,BGL2016,KSTW2014,EKSW15,EKSTWtransparent,KretzschmarPhD,SpaceTimeTDG,LiK16} for numerical results and \cite{BGL2016,EKSW15,KretzschmarPhD,SpaceTimeTDG} for stability and convergence analyses.
Problems in one space dimension have been considered in \cite{KSTW2014,SpaceTimeTDG,PFT09}, while the other references studied two- and three-dimensional cases.}
% \amnote{\cite{EKSW15,KretzschmarPhD} have some stability analysis, not convergence...}
While Trefftz methods for time-harmonic problems require non-polynomial basis functions, when used to discretise transient wave problems they admit special space--time polynomials as discrete functions.
This feature prevents excessive ill conditioning, which notoriously haunts time-harmonic Trefftz schemes (see \cite[\S4]{TrefftzSurvey}).
The earliest Trefftz methods for time-domain wave problems were proposed by A.\ Maci{\c{a}}g in \cite{Maciag05} and subsequent articles; these schemes are sorts of ``spectral'' Trefftz methods, in the sense that a single space--time element is used.
In \cite{PFT09,WTF14} Trefftz methods posed on triangulations of the space--time domain were introduced for the (second-order) acoustic wave equation; inter-element continuity 
of the solution is enforced by Lagrange multipliers. 
A Trefftz-interior penalty formulation is introduced and studied in~\cite{BGL2016}.
Another Trefftz discontinuous Galerkin (DG) formulation for time-dependent electromagnetic problems formulated as first-order systems has been proposed in \cite{KSTW2014} and analysed in \cite{SpaceTimeTDG} in one space dimension; it has been extended to full three-dimensional Maxwell equations in \cite{EKSW15,EKSTWtransparent,KretzschmarPhD}.

We mention here that, independently of the Trefftz approach, space--time finite elements for 
linear wave propagation problems, originally introduced in \cite{HughesHulbert1988} (see also~\cite{French1993,Jo93}), have been used in combination with DG formulations e.g.\ in~\cite{FalkRichter1999,CostanzoHuang05,MoRi05} and, more recently, in~\cite{DFW2016,GMS15,GSW16,Lilienthal:2014fs}.

In this paper we extend the Trefftz-DG method of \cite{SpaceTimeTDG} to initial boundary value problems for the acoustic wave equations posed on Lipschitz polytopes in arbitrary dimensions.
We write the acoustic wave problem as a first-order system, as it is originally derived from the linearised Euler equations, \cite[p.~14]{COR98}; we consider piecewise-constant wave speed, Dirichlet, Neumann and impedance boundary conditions.
The main focus of this paper is on the \textit{a priori} error analysis of the %proposed 
Trefftz--DG scheme.

The DG formulation proposed can be understood as the translation to time-domain of the Trefftz-DG formulation for the Helmholtz equation of \cite{PVersion}, which in turn is a generalisation of the Ultra Weak Variational Formulation (UWVF) of \cite{CED98}.
%and a modification of both the interior penalty and the local DG schemes \cite[\S2.2.1]{TrefftzSurvey}.
The DG numerical fluxes are upwind in time and centred with a special jump penalisation in space.
Under a suitable choice of the numerical flux coefficients, combining the proposed formulation with standard discrete spaces and complementing it with suitable volume terms, one recovers the DG formulation of \cite{MoRi05}, cf.\ Remark~\ref{rem:MR} below.
The Trefftz formulation for Maxwell's equations of 
\cite{KSTW2014,EKSW15,EKSTWtransparent,KretzschmarPhD} corresponds to the ``unpenalised'' version of that one proposed here (the numerical experiments in \cite[\S7.5]{SpaceTimeTDG} show that the numerical error depends very mildly on the penalisation parameters).

%\ipdelete{The main focus of this paper is on the \textit{a priori} error analysis of the Trefftz--DG scheme.}

We first describe the IBVP under consideration in \S\ref{s:IBVP}, the assumptions on the mesh in  \S\ref{s:Mesh} and the Trefftz--DG formulation in \S\ref{s:TDG}.
Following the thread of \cite{PVersion,SpaceTimeTDG}, in \S\ref{s:WellP} and \S\ref{s:Energy} we prove that the scheme is well-posed, quasi-optimal, dissipative (quantifying dissipation using the jumps of the discrete solution), and derive error estimates for some traces of the solution on the mesh skeleton.
In \S\ref{s:MeshIndependent} we investigate how to control the Trefftz-DG error in a mesh-independent norm: 
after setting up a general duality framework in \S\ref{s:Duality},
we prove error bounds in $L^2(Q)$ norm ($Q$ being the space--time computational domain) under some restrictive assumptions on the mesh in \S\ref{s:Stability},
and in a weaker Sobolev norm in \S\ref{s:StabilityX} under different assumptions.
%\ipdelete{To our knowledge, this is the first error analysis in the literature for a space--time Trefftz schemes in more than one space dimension.}

The analysis carried out in \S\ref{s:Analysis} holds for any choice of discrete Trefftz spaces.
In \S\ref{s:Spaces} we describe two different polynomial Trefftz spaces: one, denoted $\IT_p\Th$, in \S\ref{s:Tp} for general IBVPs for the first-order acoustic wave equations,
%which we denote by $\IT_p\Th$, 
and one, denoted $\IW_p\Th$, in \S\ref{s:Sp} for IBVPs that are obtained from second-order problems.
%, which we denote by $\IS_p\Th$.
For both discrete spaces we introduce simple bases and prove approximation estimates, which lead to fully explicit,  high-order (in the meshwidth $h$), optimal-in-$h$ convergence estimates for the Trefftz-DG method; see Theorems~\ref{thm:ConvergenceTp} and \ref{thm:ConvergenceSp}.
Estimates ensuring convergence with respect to the polynomial degree $p$, such as those proved in \cite[\S5.3.2]{SpaceTimeTDG} for one space dimension, 
are still elusive in the general case; the same situation occurs in~\cite{BGL2016}.

\am{The analysis differs from that of \cite{SpaceTimeTDG} in several respects: 
we consider higher-dimensional problems (which is the most fundamental difference),
space-like element faces not necessarily perpendicular to the time axis,
error bounds in mesh-independent norms other than $L^2$ 
(since bounds in $L^2$ norm do not seem possible in this generality),
%\ip{[also when $L^2$ is not possible...]}, 
we use different techniques to prove approximation properties of Trefftz polynomials (restricted to $h$-convergence only).
We expect that all results presented here, except possibly those of \S\ref{s:StabilityX} on error bounds in mesh-independent norm in the presence of time-like faces, can be extended to the case of Maxwell's equations in three space dimensions in a straightforward way.
}

Comparing against the Trefftz scheme of \cite{BGL2016} which is of interior penalty type, our error analysis does not use inverse estimates for polynomials, 
thus the analysis holds for any discrete Trefftz space (including non-polynomial ones, cf.\ Remark~\ref{rem:NonPolyTrefftz}) and the numerical flux parameters in the definition of the 
formulation are more easily determined (i.e.\ no parameter has to be ``large enough'').

One of the strengths of the Trefftz-DG method compared to non-Trefftz schemes is the much better asymptotic behaviour in terms of accuracy per number of degrees of freedom; 
this has already been described in details and demonstrated numerically in %the previous work
\cite{SpaceTimeTDG}.
More precisely, for a problem in $n$ space dimensions, the Trefftz approach allows to reduce
from $\calO(p^{n+1})$ to $\calO(p^n)$
the dimension of local space--time approximating spaces with effective $h$-approximation order
%degree 
$p$.
%from $\calO(p^{n+1})$ to $\calO(p^n)$.
%\ipnote{Volevo dire che la dimensione \`e quella che si avrebbe per un'approssimazione solo in spazio...}
%
A further advantage is that Trefftz schemes require quadrature to be performed on the mesh skeleton only, reducing the computational effort associated with the linear system assembly.

The Trefftz-DG formulation and its analysis admit the use of very general space--time meshes and discrete spaces, allowing local time-stepping, $hp$-refinement and interfaces not aligned to the space--time axes.
If the mesh elements can be collected in time slabs, the Trefftz-DG linear system is block-triangular, each block corresponding to a slab, thus its solution is completely analogue to an  unconditionally stable, implicit time-stepping.
One can also design a mesh in such a way that the Trefftz-DG system can be solved in an explicit fashion: this is the idea of ``{tent-pitched}'' meshes, 
see \cite{EGSU05,FalkRichter1999,GMS15,GSW16,UnSh02,APH06} and the comments in \S\ref{s:TDG} and \ref{s:Stability} below.
While the implementation of an explicit time-stepping method on a tent-pitched mesh might be quite cumbersome, the combination with a Trefftz discretisation can make it simpler as no volume quadrature on complicated shapes (the ``tents'') is needed, cf.~\S\ref{s:Stability}.

The description of the Trefftz-DG method and part of the analysis of \S\ref{s:Analysis} already appeared in the conference paper~\cite{CEDYA}.
%\ipnote{To be said here or in the submission letter? AM: Both!?}

% \amnote{If I want to recover U, how to postprocess v/sigma??? --- I wouldn't add comments on this.
% \ip{
% \begin{itemize}
% \item Continuous case: $U(\bx,t)=U_0(\bx)+\int_0^t v(\bx,\tau)\,d\tau$.
% \item Discrete case ?? 
% \end{itemize}
% }}

\section{The initial boundary value problem}\label{s:IBVP}

We consider an initial boundary value problem (IBVP) posed on a space--time domain $Q=\Omega\times I$, where $\Omega\subset\IR^n$ is an open, bounded, Lipschitz polytope with outward unit normal $\bn_\Omega^x$, $n\in\IN$ and $I=(0,T)$, $T>0$.
The boundary of $\Omega$ is divided in three parts, with mutually
disjoint interiors, denoted $\GD$, $\GN$ and $\GR$, corresponding to
Dirichlet, Neumann and Robin boundary conditions, respectively; one or two of them may be empty.
The first-order acoustic wave IBVP reads as
\begin{align}\label{eq:IBVP}
\left\{\begin{aligned}
&\nabla v+\der{\bsigma}t = \bzero &&\iin Q,\\
&\nabla\cdot\bsigma+c^{-2}\der{v}t = 0 &&\iin Q,%\nonumber
\\
&v(\cdot,0)=v_0, \quad \bsigma(\cdot,0)=\bsigma_0 &&\oon \Omega,%\nonumber
\\
&v=g_D, &&\oon \GD\times [0,T],%\nonumber
\\
&\bsigma\cdot\bn_\Omega^x=g_N, &&\oon \GN\times [0,T],%\nonumber
\\
&\frac\tht c v-\bsigma\cdot \bn_\Omega^x=g_R , &&\oon \GR\times [0,T].%\nonumber
\end{aligned}
\right.
\end{align}
Here $v_0,\bsigma_0,g_D,g_N,g_R$ are the problem data;
$c\ge c_0>0$ is the  wave speed, which is assumed to be piecewise constant and independent of $t$;
$\tht\in L^\infty(\GR\times[0,T])$ is an impedance parameter, which is assumed to be uniformly positive.
The gradient $\nabla$ and divergence $\nabla\cdot$ operators are meant
in the space variable $\bx$ only \am{and $\bn_\Omega^x$ is the outward-pointing unit normal vector on $\deO\times[0,T]$}.
%\amnote{Here $\tht\to0$ gives Neumann BCs, fluxes reflect this if $\delta c/\tht\to\beta$.
%For $\tht\to\infty$ we have Dirichlet, with $\tht(1-\delta)/c\to\alpha$, but only for homogeneous BCs!!!
%To have both non-homogeneous limits we need to use BCs with Q as in MoRi05, it would probably be a good idea since then there is only one boundary!}
If \eqref{eq:IBVP} is obtained from the linearisation of Euler's equations as in \cite[p.~14]{COR98}, $v$ and $\bsigma$ represent the small perturbations 
of pressure and velocity fields, respectively, around a static state
\am{$v_{\mathrm{eq}}=$ constant and $\bsigma_{\mathrm{eq}}=\bzero$}.
%$v_*=$ constant and $\bsigma_*=\bzero$.

% \ipnote{Possible relation to the 2nd order scalar wave equation in
% $u$ (either the pressure or the velocity potential): $v=u$, $\bsigma_t=-\nabla u$ (initial datum $\bsigma_0$ such that $c^2\nabla\cdot\bsigma_0$ is equal to
% the initial datum for $u_t$.) Or to the 2nd order vector-valued wave equation in $\bu$ (displacement): $\bsigma=\bu$, $v_t=-c^2\nabla\cdot\bu$ 
% (initial datum for $\bu_t$ should be curl-free $\to$ initial datum $v_0$ such that $\nabla v_0$ is equal to the initial datum for $\bu_t$).}

If the initial condition $\bsigma_0$ is the gradient of some scalar field, namely $\bsigma_0=-\nabla U_0$,
then, setting $v=\der Ut$ and $\bsigma=-\nabla U$, IBVP \eqref{eq:IBVP} is equivalent to the following one for the second-order scalar wave equation 
% \amnote{BCs for second-order problem are not nice, in particular Robin's are different from the standard ones.
% Maybe we should have used different BCs from the beginning?
% Not write them? 
% Use Monk's ``Q'' bc's?}
\begin{align}\label{eq:IBVP_U}
\left\{\begin{aligned}
&-\Delta U+c^{-2}\der{{^2}}{t^2}U=0&&\iin Q,
\\
&\der U t(\cdot,0)=v_0, \quad U(\cdot,0)=U_0 &&\oon \Omega,%\nonumber
\\
&\der Ut=g_D, &&\oon \GD\times [0,T],%\nonumber
\\
&-\bn_\Omega^x\cdot\nabla U=g_N, &&\oon \GN\times [0,T],%\nonumber
\\
&\frac\tht c \der Ut+\bn_\Omega^x\cdot\nabla U=g_R , &&\oon \GR\times [0,T].%\nonumber
\end{aligned}
\right.
\end{align}
The conditions on $\GR\times[0,T]$ differ from the standard Robin boundary conditions $\vartheta U+\bn_\Omega^x\cdot\nabla U=g_R$ for the wave equation; the ones we use are called ``impedance boundary conditions'' in \cite[eq.~(37)]{MoRi05}, % \am{and \cite[eq.~(1.39)]{CoPe16}}, 
``dynamic boundary conditions'' in \cite[\S1.5]{Say16} and include low-order absorbing conditions.

\begin{rem}\label{rem:IBVP1toIBVP2}
If the vector initial datum  $\bsigma_0$ belongs to $\Hdiv$, the solution of IBVP~\eqref{eq:IBVP} can be reduced to that of a second-order problem in the form \eqref{eq:IBVP_U} using 
a Hodge--Helmholtz decomposition.
We first define $U_0\in H^1\OO$ to be a solution of the Laplace--Neumann problem
$$
\begin{cases}
-\Delta U_0= \nabla\cdot\bsigma_0 &\iin \Omega,\\
-\bn_\Omega^x\cdot\nabla U_0 =\bn_\Omega^x\cdot\bsigma_0&\oon \deO,
\end{cases}
$$
and fix $\bPsi_0:=\bsigma_0+\nabla U_0$.
Then, if $U$ is the solution of \eqref{eq:IBVP_U}, where $U_0$ is used as initial condition, the pair $(\der Ut,\Psi_0-\nabla U)$ is solution of \eqref{eq:IBVP} (where $\bPsi_0$ is independent of $t$).
Thus the solution of a IBVP for the first-order wave equations with initial datum  $\bsigma_0$ in $\Hdiv$ can always be written as the sum of a term obtained from the first-order derivatives of the solution $U$ of a IBVP for the second-order wave equation, and a divergence-free, time-independent term $\bPsi_0$.
(In Remark~\ref{rem:TvsS} we derive a similar decomposition for spaces of polynomial solutions of the wave equation.)
\end{rem}

\begin{remark} 
The case of an inhomogeneous IBVP (namely when one or both the PDEs in \eqref{eq:IBVP} have a non-zero right-hand side, 
$\nabla v+\der{\bsigma}t = \bPhi$ or $\nabla\cdot\bsigma+c^{-2}\der{v}t = \psi$) can be reduced to a homogeneous one by extending the source terms 
$\bPhi$ and $\psi$ to $\IR^n\times[0,T]$ (e.g.\ by zero) and constructing a particular solution with Duhamel's principle; see \cite[Sect. 2.4.2]{EVA02}.
\end{remark}

\section{Space--time mesh and notation}\label{s:Mesh}

We partition the space--time domain $Q$ with a finite element mesh $\calT_h$.
We assume that all its elements are Lipschitz polytopes, so that each internal face $F=\partial K_1\cap \partial K_2$, for $K_1,K_2\in\calT_h$, with positive $n$-dimensional measure, is a subset of a hyperplane: % $\Pi_F$:
$$F\subset\Pi_F:=\big\{(\bx,t)\in \IR^{n+1}:\; \bx\cdot\bn^x_F+t\, n^t_F=C_F\big\},$$ 
where $(\bn^x_F,n^t_F)$ is a unit vector in $\IR^{n+1}$ and
$C_F\in\IR$. 
We make the following assumption:
\begin{align}
&\text{on an internal face } 
F=\partial K_1\cap \partial K_2 
\text{, either}
\nonumber\\
&\begin{cases}
c|\bn_F^x|< n_F^t & \text{and $F$ is called ``space-like'' face, or}\\
n_F^t=0 & \text{and $F$ is called ``time-like'' face.}
\end{cases}
\label{eq:HorVerFaces}
\end{align}
(See Remark \ref{rem:FullyUnstructured} for the extension to more general meshes.)
On space-like faces, by convention, we choose $n^t_F>0$, i.e.\ the unit normal vector $(\bn^x_F,n^t_F)$ points towards future time.
Intuitively, space-like faces are hypersurfaces lying under the characteristic cones and on which initial conditions might be imposed, while time-like faces are propagations in time of the faces of a mesh in space only.
We use the following notation:
\begin{align*}
\Fh&:=\bigcup\nolimits_{K\in\calT_h}\partial K \quad \text{(the mesh skeleton)},\\
\Fspa&:= \text{the union of the internal space-like faces,}\\
\Ftime&:= \text{the union of the internal time-like faces,}\\
\FO&:=\Omega\times\{0\},\hspace{12mm} \FT:=\Omega\times\{T\},\\ 
\FD&:=\GD\times[0,T],\qquad
\FN:=\GN\times[0,T],\qquad
\FR:=\GR\times[0,T]. %,\\
%\Fd&:=\deO\times[0,T]=\FD\cup\FN\cup\FR,\\
%\Khor&:=\partial K\cap(\Fspa\cup\FO\cup\FT),\\
%\Kver&:=\partial K\cap(\Ftime\cup\FD\cup\FN\cup\FR),\quad K\in \calT_h.
\end{align*}
(We will consider more specific meshes either with $\Ftime=\emptyset$ in \S\ref{s:Stability} or with $(\bn^x_F,n^t_F)=(\bzero,1)$ on $\Fspa$ in \S\ref{s:StabilityX}.)
We denote the outward-pointing unit normal vector on $\deK$ by
$(\bn^x_K,n^t_K)$. 
\am{We report in Figure~\ref{fig:stmesh} an example of a $(1+1)$--dimensional mesh.}

{
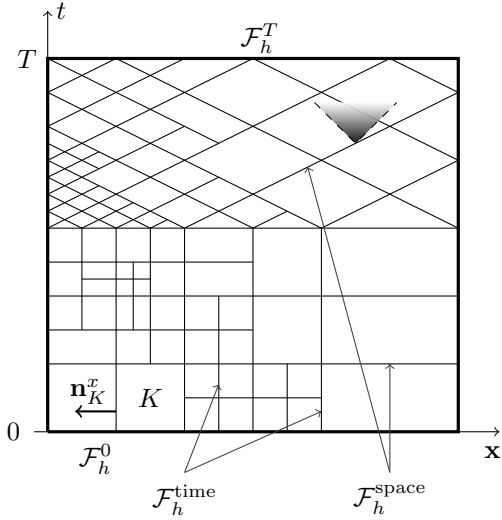
\begin{figure}
\centering
% \begin{center}
\begin{tikzpicture}[scale=0.9]
\draw [->] (0,-0.2)--(0,3.2);\draw(0.2,3.2)node{$t$};
\draw [->] (-0.2,-3) -- (6.6,-3);\draw(6.5,-3.3)node{$\bx$}; 
\draw [very thick] (0,-3)--(6,-3)--(6,2.5)--(0,2.5)--(0,-3);
\draw (0,0)--(5,2.5)--(6,2)--(2,0)--(0,1)--(3,2.5)--(6,1)--(4,0)--(0,2)--(1,2.5)--(6,0);
\draw (3.5,0.25)--(3,0)--(0,1.5)--(2,2.5);%--(2.5,2.25);
\draw (2.5,0.75)--(1,0)--(0,0.5)--(2,1.5); %....
\draw (0,2.5)--(2.5,1.25);
\draw (1.75,0.125)--(1.5,0)--(0,0.75)--(0.75,1.125);
\draw (1.25,0.375)--(0.5,0)--(0,0.25)--(0.75,0.625);
\draw (0.75,0.875)--(0,1.25);
\draw (0.5,-.75)--(1.5,-.75);
\draw (1.25,-.5)--(1.25,-1.5);
\draw (0,0)--(6,0);
\draw (0,-1)--(6,-1);
\draw (0,-2)--(6,-2);
\draw (1,0)--(1,-3);
\draw (2,0)--(2,-3);
\draw (3,0)--(3,-3);
\draw (4,0)--(4,-3);
\draw (0.5,0)--(0.5,-1.5);
\draw (1.5,0)--(1.5,-2);
\draw (0,-0.5)--(3,-0.5);
\draw (0,-1.5)--(3,-1.5);
\draw (2.5,-1)--(2.5,-3);
\draw (3.5,-2)--(3.5,-3);
\draw (2,-2.5)--(4,-2.5);
\draw(.7,-3.4)node{$\FO$}; 
\draw(3.1,2.8)node{$\FT$}; 
%\draw(-0.3,1.5)node{$\Fd$}; 
\draw(-0.5,-3)node{$0$}; 
\draw(-0.3,2.5)node{$T$}; 
\draw(1.5,-2.5)node{$K$};
\draw [->,thick] (1,-2.7) -- (0.4,-2.7);
\draw(.6,-2.4)node{$\bn_K^x$};
\draw(2,-4)node{$\Ftime$};
\draw [->,darkgray] (2,-3.6) -- (2.5,-2.3);
\draw [->,darkgray] (2,-3.6) -- (4,-2.7);
\draw(5,-4)node{$\Fspa$};
\draw [->,darkgray] (5,-3.6) -- (5,-2);
\draw [->,darkgray] (5,-3.6) -- (3.8,.9);
\shade[bottom color=black, top color=white](3.9,1.85)--(4.5,1.25)--(5.1,1.85);
\draw [dashed] (3.9,1.85)--(4.5,1.25)--(5.1,1.85);
\end{tikzpicture}
%\end{center}
\caption{\am{Example of a $(1+1)$--dimensional mesh.
The shaded triangle represent the characteristic cone of a point of a space-like face: the cone lies above the face.}}
\label{fig:stmesh}
\end{figure}
}

For piecewise-continuous scalar ($w$) and vector ($\btau$) fields, we
define averages $\mvl\cdot$, space normal jumps $\jmp{\cdot}_\bN$ and
time (full) jumps $\jmp{\cdot}_t$ on internal faces in the standard DG notation:
on $F=\deK_1\cap\deK_2$, $K_1,K_2\in\calT_h$,
\begin{align*}
\mvl{w}&:=\frac{w_{|_{K_1}}+w_{|_{K_2}}}2,\qquad
\mvl{\btau}:=\frac{\btau_{|_{K_1}}+\btau_{|_{K_2}}}2, \\
\jmp{w}_\bN&:= w_{|_{K_1}}\bn_{K_1}^x+w_{|_{K_2}} \bn_{K_2}^x,\\
\jmp{\btau}_\bN&:= \btau_{|_{K_1}}\cdot\bn_{K_1}^x+\btau_{|_{K_2}} \cdot\bn_{K_2}^x,\\
\jmp{w}_t&:= w_{|_{K_1}}n_{K_1}^t+w_{|_{K_2}} n_{K_2}^t=(w^--w^+)n^t_F,\\
\jmp{\btau}_t&:= \btau_{|_{K_1}} n_{K_1}^t+\btau_{|_{K_2}} n_{K_2}^t= (\btau^--\btau^+)n^t_F.
\end{align*}
The time jumps $\jmp{\cdot}_t$ are different from zero on space-like faces only. 
Here we have denoted by $w^-$ and $w^+$ the traces of the function $w$ from the adjacent elements at lower and higher times, respectively (and similarly for $\btau^\pm$).
We use the notation $\jmp{\cdot}_\bN$ to recall that $\jmp{w}_\bN$ is a vector field parallel to $\bn^x_F$ and $\jmp{\btau}_\bN$ is the jump of the normal component of $\btau$ only.
We recall also that in \am{these} formulas $|n^t_K|,|\bn^x_K|\le1$, and that
one of the two might be zero; in particular, as already pointed out, $\jmp{w}_t=0$ and $\jmp{\btau}_t=\bzero$ on $\Ftime$.
The following identities can easily be shown:
%\begin{align}\nonumber
%&n^t_F\Big(w^-\jmp{w}_t-\frac12 \jmp{w^2}_t\Big)=\frac1{2}\jmp{w}_t^2,
%\\
%&n^t_F\Big(\btau^-\jmp{\btau}_t-\frac12 \jmp{\btau^2}_t\Big)=\frac1{2}|\jmp{\btau}_t|^2,
%\label{eq:JumpIds}\\
%&n^t_F\big(w^-\jmp{\btau}_\bN+\btau^-\cdot\jmp{w}_\bN-\jmp{w\btau}_\bN\big)   %=(w^--w^+)\jmp{\btau}_\bN
%=\jmp{w}_t\jmp{\btau}_\bN,
%\nonumber\\
%&\mvl{w}\jmp{\btau }_\bN+\mvl{\btau}\cdot\jmp{ w }_\bN=\jmp{w\btau}_\bN,
%% \nonumber\\
%% &\mvl{w}\jmp{w}_t=\frac12\jmp{w^2}_t,
%% \qquad
%% \mvl{\btau}\cdot\jmp{\btau}_t=\frac12\jmp{|\btau|^2}_t.
%\nonumber
%\end{align}
% 
\begin{align}\nonumber
&w^-\jmp{w}_t-\frac12 \jmp{w^2}_t=\frac1{2n^t_F}\jmp{w}_t^2
&&{\oon\Fspa},\\
&\btau^-\jmp{\btau}_t-\frac12 \jmp{\btau^2}_t=\frac1{2n^t_F}|\jmp{\btau}_t|^2
&&{\oon\Fspa},\label{eq:JumpIds}\\
&w^-\jmp{\btau}_\bN+\btau^-\cdot\jmp{w}_\bN-\jmp{w\btau}_\bN   %=(w^--w^+)\jmp{\btau}_\bN
=\frac1{n^t_F}\jmp{w}_t\jmp{\btau}_\bN
&&{\oon\Fspa},\nonumber\\
&\mvl{w}\jmp{\btau }_\bN+\mvl{\btau}\cdot\jmp{ w }_\bN=\jmp{w\btau}_\bN
&&{\oon\Fspa\cup\Ftime}.
% \nonumber\\
% &\mvl{w}\jmp{w}_t=\frac12\jmp{w^2}_t,
% \qquad
% \mvl{\btau}\cdot\jmp{\btau}_t=\frac12\jmp{|\btau|^2}_t.
\nonumber
\end{align}
We assume that the mesh $\calT_h$ is chosen  so that the wave speed $c$ is constant in each element; as $c$ is independent of time, it may jump only across faces in $\Ftime$.
The assumptions on the mesh made so far are sufficient for part of the error analysis of \S\ref{s:Analysis}; in order to prove error bounds in mesh-independent norms and orders of 
convergence, additional assumptions on the shape of the elements will be specified in the following (see, in  particular, Corollaries~\ref{cor:NoTimeLike} and~\ref{cor:Hminus1}, 
Theorems~\ref{thm:ConvergenceTp} and \ref{thm:ConvergenceSp}).

Finally, we define %broken Sobolev spaces, 
local and global \emph{Trefftz spaces}:
\begin{align*} 
% H^s\Th:=&\big\{w\in L^2(Q),  \text{ s.t. } w_{|_K}\in H^s(K)\big\} \qquad \forall s>0,
% % USED ONLY IN SECTION ON POLYNOMIAL ERRORS... POSTPONE?
% \\
\bT(K):=&\Big\{(w ,\btau )\in L^2(K)^{1+n} \text{ s.t. }
\btau_{|_{\deK}}\in L^2(\deK)^n,%\\&
%\btau\cdot\bn^x_K\in L^2(\deK)
\der w t, \nabla\cdot \btau \in  L^2(K), \\&\qquad
\der \btau t, \nabla w \in L^2(K)^n,\quad % \\&
\underbrace{\nabla w+\der{\btau}t = \bzero, \;
\nabla\cdot\btau+c^{-2}\der{w}t = 0}_{\text{(Trefftz property)}}
 \Big\}\qquad\forall K\in \calT_h,
\\
\bT(\calT_h):=&\Big\{(w ,\btau )\in L^2(Q)^{1+n},\text{ s.t. }
(w_{|_K},\btau_{|_K})\in \bT(K) \;\forall K\in \calT_h\Big\}.
% \\
% \bT(\calT_h):=& \prod_{K\in\calT_h}\bT(K).
\end{align*}
The solution $(v,\bsigma)$ of IBVP \eqref{eq:IBVP} is assumed to
belong to $\bT\Th$\am{; as it clearly satisfies the Trefftz property,
this is an assumption on its smoothness (in general the solution of the IBVP \eqref{eq:IBVP} belongs to the graph space of the PDE, so for example $\der vt+\nabla\cdot\boldsymbol\sigma\in L^2(K)$ for all elements $K$, but not necessarily $\der vt\in L^2(K)$ and $\nabla\cdot\boldsymbol\sigma\in L^2(K)$;
similarly, the trace of $\boldsymbol\sigma$ on $\partial K$ might not be square-integrable)}.

\section{The Trefftz-discontinuous Galerkin method}\label{s:TDG}

To obtain the DG formulation, we multiply the first two equations of \eqref{eq:IBVP} with test fields $\btau$ and $w$ and integrate by parts on a single mesh element $K\in\calT_h$:
\begin{align}
\label{eq:Elemental0}
&-\int_K\bigg(v
\Big(%\underbrace{
\nabla\cdot\btau+c^{-2}\der wt %}_{{=0 \text{ if $(w,\btau)$ Trefftz}}}
\Big) +\bsigma\cdot\Big(%\underbrace{
\der\btau t+\nabla w  %}_{{=\bzero, \text{ if $(w,\btau)$ Trefftz}}}
\Big)\bigg)\di V
\\
&+\int_{\partial K} \bigg((v\,\btau + \bsigma\, w)\cdot\bn_K^x
+\Big(\bsigma\cdot\btau + c^{-2} v\,w\Big)\,n_K^t\bigg)\di S
% &+\int_{\partial K} \bigg((\widehat{v}\,\btau + \widehat\bsigma\, w)\cdot\bn_K^x
% +\Big(\widehat\bsigma\cdot\btau + \frac{1}{c^2}\,\widehat v\,w\Big)\,n_K^t\bigg)\di S
=0.
\nonumber
\end{align}
We look for a discrete solution $(v\hp,\bsigma\hp)$ approximating
$(v,\bsigma)$ in a finite-dimensional (arbitrary, at this stage)
Trefftz space $\bVp \Th\subset\bT\Th$, where the subscript $p$ is related to the dimension of the local spaces. 
%[I changed $\bV  \Th$ into $\bVp \Th$ otherwise there is no reason for writing $v\hp$  and $\bsigma\hp$. There is a macro at the beginning of this file;  easy to change back, if needed.]
We take the test field $(w,\btau)$ in the same space $\bVp\Th$, thus \am{the volume integral over $K$ in \eqref{eq:Elemental0} vanishes}.
The traces of $v\hp$ and $\bsigma\hp$ on the mesh skeleton are approximated by the (single-valued) \emph{numerical fluxes} $\hVhp$ and $\hShp$, so that \eqref{eq:Elemental0} is rewritten as:
\begin{align}\label{eq:Elemental1}
\int_{\partial K} %\bigg(
\hVhp\Big(\btau \cdot\bn_K^x+\frac{w}{c^2} n_K^t\Big)
+\hShp\cdot\big(w\bn_K^x+\btau\,n_K^t\big)%\bigg)
\di S=0.
\end{align}
We choose to define the numerical fluxes as:
\begin{align*}%\nonumber%\label{eq:Fluxes}  % EQ. NUMBER MESSES UP ALL
%&\hVhp := \hspace{27mm} \hShp:=\\
\hVhp :=\begin{cases}
\Vhp^- \\
\Vhp \\
v_0\\ 
\mvl{\Vhp}+\beta \jmp{\Shp}_\bN \\
g_D\\
v\hp+\beta(\bsigma\hp\cdot\bn_\Omega^x-g_N)\\
%v\hp-\delta(v\hp-c(\bsigma\hp\cdot\bn_\Omega^x)-cg_R)\\
(1-\delta)v\hp+\frac{\delta c}\tht (\bsigma\hp\cdot\bn_\Omega^x+g_R)
\end{cases}\hspace{-4mm}
\hShp:=
\begin{cases}
\Shp^- & \oon \Fspa,\\
\Shp &\oon \FT,\\
\bsigma_0 &\oon \FO,\\
%\mvl{(\Shp\cdot\bn^x_F)\bn^x_F}
%\red{(\mvl{\Shp}\cdot\bn^x_F)\bn^x_F??}
\mvl{\Shp}+\alpha \jmp{\Vhp}_\bN \hspace{-2mm}&\oon \Ftime,\\
\Shp+\alpha(v\hp-g_D)\bn_\Omega^x\hspace{-2mm}&\oon \FD,\\
g_N\bn_\Omega^x
&\oon \FN,\\
(1-\delta)(\tht\frac {v\hp}c-g_R)\bn_\Omega^x+\delta\bsigma\hp&\oon \FR.
\end{cases}
\end{align*}
The parameters $\alpha\in L^\infty(\Ftime\cup\FD)$,  $\beta\in
L^\infty(\Ftime\cup\FN)$ and  
$\delta\in L^\infty(\FR)$ will be chosen depending on the mesh. 
They may be used to tune the method, e.g.\ to deal with locally-refined
meshes (see \S\ref{s:StabilityX} below).

These fluxes are \emph{consistent}, in the sense that they coincide
with the traces of the exact solution $(v,\bsigma)$ of the IBVP \eqref{eq:IBVP} if they are applied to $(v,\bsigma)$ itself, which satisfies the boundary conditions and has no jumps across mesh faces.
Moreover, the fluxes satisfy %the boundary conditions, e.g.\ 
$\frac\tht c \hVhp-\hShp\cdot\bn_\Omega^x=g_R$ on $\FR$.
The numerical fluxes can be understood as upwind fluxes on the space-like faces and centred fluxes with jump penalisation on the time-like ones.

Summing the elemental TDG equation \eqref{eq:Elemental1} over the elements $K\in \calT_h$, with the fluxes defined above, we obtain the Trefftz-DG variational formulation:
\begin{align}
\text{Seek}&\; (\Vhp,\Shp)\in \bVp (\calT_h) \text{ such that }\;
\calA(\Vhp,\Shp; w ,\btau )=\ell( w ,\btau )\quad 
\forall ( w ,\btau )\in \bVp (\calT_h), \text{ where}
\nonumber\\
\calA(&\Vhp,\Shp; w ,\btau ):=
\int_{\Fspa}\big(c^{-2}\Vhp^-\jmp{w}_t+\Shp^-\cdot\jmp{\btau}_t+\Vhp^-\jmp{\btau}_\bN+\Shp^-\cdot\jmp{w}_\bN\big)\di S
\label{eq:TDG}
\\
&+\int_\FT (c^{-2}\Vhp  w +\Shp \cdot\btau )\di \bx
\nonumber\\
&+\int_\Ftime \big( \mvl{\Vhp}\jmp{\btau }_\bN+\mvl{\Shp}\cdot\jmp{ w }_\bN
+\alpha\jmp{\Vhp}_\bN\cdot\jmp{ w }_\bN+ \beta\jmp{\Shp}_\bN\jmp{\btau }_\bN
% OLD PART WITH JUMPS IN TIME ALSO ON Fver, FOR NON-STRAIGHT VERTICAL FACES
% \nonumber\\
% &\qquad+ c^{-2}\mvl{\Vhp}\jmp{w}_t+\mmbox{\mvl{\Shp}\cdot\jmp{ \btau }_t}
% +\alpha\jmp{\Vhp}_\bN\cdot\jmp{ \btau }_t+ \beta c^{-2}\jmp{\Shp}_\bN\jmp{w }_t
\big)\di S
\nonumber\\
&+\int_\FD \big(\bsigma\cdot\bn_\Omega^x\, w +\alpha \Vhp w   \big) \di S+\int_\FN\big(v\hp(\btau\cdot\bn_\Omega^x)
+\beta(\bsigma\cdot\bn_\Omega^x)(\btau\cdot\bn_\Omega^x)\big)\di S
\nonumber\\
&+\int_\FR\Big(
\frac{(1-\delta)\tht}c v\hp w+(1-\delta)v\hp(\btau\cdot\bn_\Omega^x)
+\delta(\bsigma\hp\cdot\bn_\Omega^x) w+\frac{\delta c}\tht(\bsigma\hp\cdot\bn_\Omega^x) (\btau\cdot\bn_\Omega^x)
\Big)\di S,
\nonumber\\
%%% ALTERNATIVE ROBIN TERM!
% &\red{+\int_\FR\Big(c^{-1/2}(1-\delta)v\hp+\delta c^{1/2}(\bsigma\hp\cdot\bn_\Omega^x)\Big)
% \Big(c^{-1/2}w+c^{1/2}(\btau\cdot\bn_\Omega^x)\Big)\di S,}
% \\
\ell( &w ,\btau ):=
\int_\FO ( c^{-2}v_0 w  +\bsigma_0\cdot \btau )\di \bx
\nonumber\\&
+\int_\FD g_D\big(\alpha  w -\btau\cdot\bn_\Omega^x\big)\di S
+\int_\FN g_N \big(\beta\,\btau\cdot\bn_\Omega^x-w\big)\di S
\nonumber\\&
+\int_\FR g_R \Big((1-\delta)w-\frac{\delta c}\tht\,\btau\cdot\bn_\Omega^x\Big)\di S.
\nonumber
\end{align}
Method \eqref{eq:TDG} appears as a  formulation over the whole space--time domain $Q$, which would lead to a global linear system coupling all elements.
However, if the mesh is suitably designed, the system matrix is block-triangular and the solution can be computed by solving a sequence of smaller local problem.
A first possibility is to partition the time interval $[0,T]$ into subintervals and solve sequentially for the corresponding time slabs $\Omega\times(t_{j-1},t_j)$, see \cite{EKSW15,KSTW2014}; this corresponds to an \emph{implicit} method and allows local mesh refinement.
\am{A slightly more complicated, but potentially much more efficient version is to solve for small patches of elements, localised in space and time, in the spirit of the \emph{semi-explicit} ``tent-pitching'' algorithm of \cite{EGSU05,FalkRichter1999,MoRi05,APH06}. %,UnSh02}.
If no time-like faces are present in the mesh, the solution can be found by solving a small system for each element, see \S\ref{s:Stability} below.
In the semi-explicit solution of the system arising from \eqref{eq:TDG}, assumption \eqref{eq:HorVerFaces} (equivalently, $\gamma<1$) plays the role of a CFL condition.}
If the same mesh is used, all these approaches are %algebraically 
equivalent, in the sense that the discrete solutions $(\Vhp,\Shp)$ coincide.

\begin{remark}\label{rem:FullyUnstructured}
One could easily extend the formulation weakening assumption \eqref{eq:HorVerFaces} to allow more general time-like faces with $c|\bn^x_F|>n^t_F$, i.e.\ not aligned to the time-axis.
Choosing the numerical fluxes as above, one obtains a formulation similar to \eqref{eq:TDG} with additional terms on $\Ftime$ containing $\jmp{w}_t$ and $\jmp{\btau}_t$, which do not vanish in this setting.
It is then easy to prove the coercivity of the new bilinear form in a slight modification of the DG norm introduced below.
However, the bilinear form will contain the term $\int_\Ftime\mvl{\Shp}\cdot\jmp{\btau}_t\di S$, featuring the full jump of $\btau$ (as opposed to the normal jump only), which, in dimension $n>1$, is not easily controlled by the same DG norms.
\end{remark}

\begin{remark}\label{rem:MR}
Formulation \eqref{eq:TDG} can be seen in the framework of DG methods for general first-order hyperbolic systems developed in \cite{MoRi05}, which considers standard discontinuous piecewise-polynomial spaces.
The choice of the numerical fluxes on the interior faces correspond to the choice of a suitable decomposition of the block matrix 
$\mathsf M=(\begin{smallmatrix}n^t_K c^{-2} &(\bn^x_K)^\top\\ \bn^x_K & n^t_K \mathrm{Id}_n \end{smallmatrix})$ 
defined on $\deK$ for all $K\in \calT_h$.
Here $\mathrm{Id}_n$ is the identity matrix in $\IR^{n\times n}$, ${}^\top$ denotes vector transposition, and $\bn^x_K$ is thought as a column vector.
The choice we have made in this section corresponds to the decomposition:
\begin{center}
\begin{tabular}{l|l|l}
on & $\mathsf M^+ =$ & $\mathsf M^-=$ \\
\hline
$\partial K\cap\Fspa\cap\{n^t_K>0\}$&
$\mathsf M$&
$\bzero$
\\
$\partial K\cap\Fspa\cap\{n^t_K<0\}$&$\bzero$&
$\mathsf M$
\\
$\partial K\cap\Ftime$ 
&$(\begin{smallmatrix}\alpha &\frac12(\bn^x_K)^\top\\ 
\frac12\bn^x_K & \beta\bn^x_K\otimes \bn^x_K\end{smallmatrix})$&
$(\begin{smallmatrix}-\alpha &\frac12(\bn^x_K)^\top\\ 
\frac12\bn^x_K & -\beta\bn^x_K\otimes \bn^x_K\end{smallmatrix})$
%\\\hline
\end{tabular} 
\end{center}
The conditions $\mathsf M^+ +\mathsf M^-=\mathsf M $,
$\ker(\mathsf M^+ -\mathsf M^-)=\ker(\mathsf M)$
and $\mathsf M^+_{|_{\deK_1}} +\mathsf M^-_{|_{\deK_2}}=\bzero$ on $\deK_1\cap\deK_2$
are automatically satisfied (recall that $n^t_K=0$ on $\Ftime$).
Moreover, $\mathsf M^+\ge0$ and $\mathsf M^-\le 0$ hold true if and only if $\alpha\beta\ge1/4$.
The boundary terms in \eqref{eq:TDG} and in \cite[\S6]{MoRi05} coincide (apart from a different sign convention) if our flux parameters and their boundary coefficients $Q$ and $\sigma$ are chosen so that 
$\alpha=\sigma$ on $\GD$, $\beta=1/\sigma$ on $\GN$,  $\delta=(1+Q)/2$ and $\tht/c=\sigma(1+Q)/(1-Q)$ on $\GR$.
\end{remark}

\section{A priori error analysis}\label{s:Analysis}

In this section we prove \textit{a priori} error bounds for the Trefftz-DG approximation of the solution to~\eqref{eq:IBVP}. 
After introducing mesh-dependent norms in Section~\ref{s:Assumptions}, we prove well-posedness and quasi-optimality of the Trefftz-DG formulation in Section~\ref{s:WellP} and error bounds on space-like mesh interfaces in Section~\ref{s:Energy}. 
Error bounds in mesh-independent norms are derived in Section~\ref{s:MeshIndependent}.

\subsection{Definitions and assumptions}\label{s:Assumptions}

We prove the well-posedness and the stability of the Trefftz-DG formulation \eqref{eq:TDG} under the assumption that the flux parameters $\alpha$, $\beta$ and $\delta$ are uniformly positive in their domains of definition and that $\Norm{\delta}_{L^\infty(\FR)}<1$.
We introduce a piecewise-constant function $\gamma$ defined on $\Fspa\cup\FO\cup\FT$, measuring how close to characteristic cones the space-like mesh faces are:
\begin{align}\label{eq:gamma}
\gamma:= \frac{c|\bn^x_F|}{n^t_F}\;\oon F%=\deK_1\cap\deK_2
\subset\Fspa, \qquad \gamma:=0\;\oon \FO\cup\FT,
\end{align}
from which, recalling assumption \eqref{eq:HorVerFaces}, $\gamma\in[0,1)$ and 
\begin{align}\label{eq:JumpIneq}
&%\begin{cases}
\abs{\jmp{w}_\bN}\le \frac \gamma c\abs{\jmp{w}_t},\quad
\abs{\jmp{\btau}_\bN}\le \frac \gamma c\abs{\jmp{\btau}_t} \quad\oon \Fspa.
%\end{cases}
\end{align}
%We also fix $\gamma:=0$ on $\FO\cup\FT$.
%
We define two mesh- and flux-dependent norms on $\bT\Th$:
\begin{align}\label{eq:DGnorm}
\Tnorm{(w ,\btau )}^2\DG&:=
\frac12 \Norm{\Big(\frac{1-\gamma}{n^t_F}\Big)^{1/2}c^{-1}\jmp{w }_t}_{L^2(\Fspa)}^2
+\frac12\Norm{\Big(\frac{1-\gamma}{n^t_F}\Big)^{1/2}\jmp{\btau }_t\rule{0pt}{3mm}}_{L^2(\Fspa)^n}^2 \\\nonumber
&\quad
+\frac12\Norm{c^{-1}w }^2_{L^2(\FO\cup\FT)} 
+\frac12\Norm{\btau \rule{0pt}{3mm}}^2_{L^2(\FO\cup\FT)^n} 
\\&\quad\nonumber
+\Norm{\alpha^{1/2}\jmp{w }_\bN}_{L^2(\Ftime)^n}^2
%\\&\quad
+\Norm{\beta^{1/2}\jmp{\btau }_\bN}_{L^2(\Ftime)}^2\\\nonumber
&\quad+\Norm{\alpha^{1/2}w }_{L^2(\FD)}^2
+\Norm{\beta^{1/2} \btau\cdot\bn_\Omega^x }_{L^2(\FN)}^2\\\nonumber
&\quad+\Norm{\Big(\frac{(1-\delta)\tht}c\Big)^{1/2} w }_{L^2(\FR)}^2
+\Norm{\Big(\frac{\delta c}\tht\Big)^{1/2} \btau\cdot\bn_\Omega^x }_{L^2(\FR)}^2;
\\
\nonumber
%%%
\Tnorm{(w ,\btau )}^2\DGp&:=
\Tnorm{(w ,\btau )}^2\DG
\\&\quad\nonumber
+2\Norm{\Big(\frac{n^t_F}{1-\gamma}\Big)^{1/2} c^{-1}w^-}_{L^2(\Fspa)}^2
+2\Norm{\Big(\frac{n^t_F}{1-\gamma}\Big)^{1/2}\btau^-}_{L^2(\Fspa)^n}^2  
\\&\quad\nonumber
+\Norm{%(1-\gstar)^{-1/2}
\beta^{-1/2}\mvl{w }}_{L^2(\Ftime)}^2
%\\&\quad
+\Norm{%(1-\gstar)^{-1/2}
\alpha^{-1/2}\mvl{\btau }}_{L^2(\Ftime)^n}^2
\\&\quad\nonumber
+\Norm{\alpha^{-1/2}\btau\cdot\bn_\Omega^x }_{L^2(\FD)}^2
+\Norm{\beta^{-1/2} w }_{L^2(\FN)}^2.
\end{align}
% (The factors $(1-\gamma)^{\pm1/2}$ may be dropped from all terms in the two norms without modifying the analysis in the following sections.)
These are only \textit{seminorms} on broken Sobolev spaces defined on the mesh $\calT_h$, % such as $\prod_{K\in\calT_h}H^1\Th$, 
but are norms on $\bT\Th$: indeed $\Tnorm{(w,\btau)}=0$ for
$(w,\btau)\in\bT\Th$ implies that  $(w,\btau)$ is solution of the IBVP
\eqref{eq:IBVP} with zero initial and boundary conditions, so
$(w,\btau)=(0,\bzero)$ by the well-posedness of the IBVP itself (see
\cite[Lemma~4.1]{SpaceTimeTDG}).

\begin{rem}
The vector average $\mvl{\btau}$ in the definition of the $\Tnorm{\cdot}\DGp$ norm can be substituted by its normal component $\frac12(\btau_{|_{K_1}}\cdot\bn_{K_1}^x-\btau_{|_{K_2}}\cdot\bn_{K_2}^x)\bn_{K_1}^x$ on $\deK_1\cap\deK_2\cap\Ftime$.
The analysis carried out in the following is not altered by this change.
% \amnote{The reason to change to normal averages is that the method we have for hyperbolic systems should correspond to this modified version, so then we can refer back to this as a special case (if I remember right, should check).
% I was also thinking that this might help extending the method to meshes with time-like faces not perfectly aligned with time axis but simply with $c|\bn_x|>|n_t|$, but I'm probably wrong.
% If I use normal averages in numerical fluxes then consistency is lost, any fix? \\
% Fine to leave just a remark or a sentence. 
% }
\end{rem}

\subsection{Well-posedness}\label{s:WellP}

We first note that for all Trefftz field $(w,\btau)\in\bT\Th$
\begin{equation}\label{eq:boundaryintegral}
\int_{\deK}
\bigg(w\btau\cdot\bn^x_K+\frac12\Big(c^{-2}{w^2}+|\btau|^2\Big)n^t_K\bigg)\di
  S=0\quad \forall K\in\calT_h,
\end{equation}
which follows from integration by parts and the Trefftz property.
Subtracting these terms from the bilinear form $\calA$, % in box \eqref{eq:TDG}, 
using the jump identities \eqref{eq:JumpIds}, the inequalities \eqref{eq:JumpIneq}, the definition of $\gamma$ in \eqref{eq:gamma}, and the weighted Cauchy--Schwarz inequality, we show that the form   $\calA$ is coercive in $\Tnorm{\cdot}\DG$ norm with unit  constant\am{. Indeed, for all $(w,\btau)\in \bT\Th$, we have}
\begin{align*}
\calA(w,\btau; w ,\btau )
\overset{\am{\eqref{eq:boundaryintegral}}}=&
\calA(w,\btau; w ,\btau )
-\sum_{K\in\calT_h}\int_{\deK}
\bigg(w\btau\cdot\bn^x_K+\frac{1}2\Big(c^{-2}{w^2}+|\btau|^2\Big)n^t_K\bigg)\di S
\\
\overset{\eqref{eq:TDG}}=&\int_{\Fspa}\big(c^{-2}w^-\jmp{w}_t+\btau^-\cdot\jmp{\btau}_t
+w^-\jmp{\btau}_\bN+\btau^-\cdot\jmp{w}_\bN
\\&\qquad
-\jmp{w\btau}_\bN-\frac12\jmp{c^{-2}w^2+|\btau|^2}_t\big)\di \bx
\\&
+\frac12\int_\FT (c^{-2}w^2  +\abs{\btau}^2 )\di \bx
+\frac12\int_\FO(c^{-2}w^2  +\abs{\btau}^2 )\di \bx
\\&
+\int_\Ftime \Big( \mvl{w}\jmp{\btau }_\bN+\mvl{\btau}\cdot\jmp{ w }_\bN
% \\&\qquad
+\alpha|\jmp{w}_\bN|^2+ \beta\jmp{\btau}_\bN^2-\jmp{w\btau}_\bN\Big)\di S
\\&
+\int_\FD \alpha  w^2    \di S
+\int_\FN \beta\am{(\btau\cdot\bn_\Omega^x)^2}
%(\btau\cdot\bn_\Omega^x)(\btau\cdot\bn_\Omega^x)
\di S
\\&
+\int_\FR\Big(\frac{(1-\delta)\tht}c w^2
+\frac{\delta c}\tht(\btau\cdot\bn_\Omega^x)^2
\Big)\di S
\\
\overset{\eqref{eq:JumpIds}}=&
\int_{\Fspa}\bigg(
\frac1{2n^t_F}(c^{-2}\jmp{w}_t^2+|\jmp{\btau}_t|^2)+\frac1{n^t_F}\jmp{w}_t\jmp{\btau}_\bN
\bigg)\di \bx
\\&
+\frac12\Norm{c^{-1}w }^2_{L^2(\FO\cup\FT)} 
+\frac12\Norm{\btau \rule{0pt}{3mm}}^2_{L^2(\FO\cup\FT)^n}
\\&
+\Norm{\alpha^{1/2}\jmp{w}_\bN}^2_{L^2(\Ftime)}
+\Norm{\beta^{1/2}\jmp{\btau}_\bN}^2_{L^2(\Ftime)}
%+\int_\Ftime\big( \alpha|\jmp{w}_\bN|^2+ \beta|\jmp{\btau}_\bN|^2
% \\&\qquad
% +\alpha\jmp{w}_\bN\cdot\jmp{ \btau }_t+ \beta c^{-2}\jmp{\btau}_\bN\jmp{w }_t
%\big) \di S
\\&
+\Norm{\alpha^{1/2}w }_{L^2(\FD)}^2
+\Norm{\beta^{1/2} \btau\cdot\bn_\Omega^x }_{L^2(\FN)}^2
\\&
+\Norm{\Big(\frac{(1-\delta)\tht}c\Big)^{1/2} w }_{L^2(\FR)}^2
+\Norm{\Big(\frac{\delta c}\tht\Big)^{1/2} \btau\cdot\bn_\Omega^x }_{L^2(\FR)}^2
\\
\overset{\eqref{eq:gamma},\eqref{eq:JumpIneq}}
\ge&
\Tnorm{(w,\btau)}\DG^2.
%\qquad\qquad \ipdelete{\forall (w,\btau)\in \bT\Th.
\end{align*}
In particular, if all the space-like faces are perpendicular to the time axis (i.e.\ $\bn_K^x=\bzero$ on them) then $\calA(w,\btau; w ,\btau )=\Tnorm{(w,\btau)}\DG^2$ for all $(w,\btau)\in \bT\Th$.
% \amnote{For hyperbolic systems we want/need to define 
% $\Tnorm{(w,\btau)}\DG^2:=\calA(w,\btau; w ,\btau )$, so to be consistent here we should define the terms on $\Fspa$ in the DG norm using the formula above.
% But it would look horrible, I'd leave as is. \ip{OK.}  }
% FOR OLDER VERSIONS SEE v1 and hypernotes
Using again the Cauchy--Schwarz inequality, the bounds on the jumps \eqref{eq:JumpIneq} and $\gamma<1$, we have the following continuity estimate for the bilinear form $\calA$: for all $(v,\bsigma),(w ,\btau )\in\bT\Th$
\begin{align}
\nonumber
&\abs{\calA(v,\bsigma; w ,\btau )}\le C_c\Tnorm{(v,\bsigma)}\DGp\Tnorm{(w,\btau)}\DG,
\quad \text{where}
\\&
C_c:=
\begin{cases}
2, &\text{if }\FR=\emptyset,\\
2\max\Big\{\Norm{\frac{1-\delta}\delta}_{L^\infty(\FR)}^{1/2},
\big\|\frac\delta{1-\delta}\big\|_{L^\infty(\FR)}^{1/2}\Big\}
&\text{if }\FR\ne\emptyset.
\end{cases}
\label{eq:Continuity}
\end{align}
Note that $C_c\ge2$ and that the minimal value $C_c=2$ is obtained for $\delta=1/2$.
%Note alse that coercivity holds thanks to the Trefftz property, while continuity does not need it.
Also the linear functional $\ell$ is continuous:
\begin{align*}
\abs{\ell(w,\btau)}
\le\Big(&2\Norm{c^{-1}v_0}_{L^2(\FO)}^2+2\Norm{\bsigma_0}_{L^2(\FO)}^2
+2\Norm{\alpha^{1/2}g_D}_{L^2(\FD)}^2
\\&
+2\Norm{\beta^{1/2}g_N}_{L^2(\FN)}^2
+% NO 2 for Robin
%\Norm{\Big(\frac c\tht\Big)^{1/2}g_R}_{L^2(\FR)}^2
\Norm{(c/\tht)^{1/2}g_R}_{L^2(\FR)}^2
\Big)^{1/2} \Tnorm{(w,\btau)}\DGp;
\end{align*}
if $g_D=g_N=0$ (or the corresponding parts $\FD,\FN$ of the boundary are empty) then 
the $\Tnorm{(w,\btau)}\DGp$ norm at the right-hand side can be substituted by $\Tnorm{(w,\btau)}\DG$.
%the $\Tnorm{\cdot}\DGp$ of $(w,\btau)$ at the right-hand side can be substituted by the corresponding $\Tnorm{\cdot}\DG$ norm.

Combining coercivity and continuity, C\'ea's lemma gives well-posedness and quasi-op\-ti\-mal\-i\-ty of the Trefftz-DG formulation, irrespectively of the discrete Trefftz space $\bVp\Th$ chosen.

\begin{theorem}\label{th:QO}
%There exists a unique discrete Trefftz field $(v\hp,\bsigma\hp)$ solution of the variational problem \eqref{eq:TDG}.
The variational problem \eqref{eq:TDG} admits a unique solution $(v\hp,\bsigma\hp)\in \bVp\Th$.
It satisfies the error bound
\begin{align}\label{eq:QuasiOpt}
%\mmbox{
\Tnorm{(v-\Vhp,\bsigma-\Shp)}\DG\le  (1+C_c) \inf_{(w,\btau)\in\bVp\Th} \Tnorm{(v-w,\bsigma-\btau)}\DGp,
%}
\end{align}
%\begin{align*}%\label{eq:QuasiOpt}
%&\Tnorm{(v-\Vhp,\bsigma-\Shp)}\DG\\
%&\qquad\le  (1+C_c) \inf_{(w,\btau)\in\bVp\Th} \Tnorm{(v-w,\bsigma-\btau)}\DGp,
%\end{align*}
with $C_c$ as in \eqref{eq:Continuity}.
Moreover, if $g_D=g_N=0$ (or the corresponding parts $\FD,\FN$ of the boundary are empty) then
\begin{align*}
&\Tnorm{(v\hp,\bsigma\hp)}\DG\le
\bigg(2\Norm{c^{-1}
v_0}_{L^2(\FO)}^2+2\Norm{\bsigma_0}_{L^2(\FO)}^2
+\Norm{
%\frac{c^{1/2}}{\tht^{1/2}}
(c/\tht)^{1/2}
g_R}_{L^2(\FR)}^2
\bigg)^{1/2}.
\end{align*}
\end{theorem}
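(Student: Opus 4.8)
The plan is to derive all three assertions from the coercivity and continuity estimates established immediately above the statement, via a textbook Lax--Milgram/C\'ea argument, the only problem-specific ingredient being the consistency of the numerical fluxes.

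First I would dispatch existence and uniqueness. Since $\bVp\Th$ is finite dimensional, \eqref{eq:TDG} is a square linear system, so it suffices to prove uniqueness, i.e.\ that $\ell\equiv 0$ forces $(\Vhp,\Shp)=(0,\bzero)$. Testing the homogeneous problem with $(w,\btau)=(\Vhp,\Shp)$ and using the coercivity bound $\calA(w,\btau;w,\btau)\ge\Tnorm{(w,\btau)}\DG^2$ gives $\Tnorm{(\Vhp,\Shp)}\DG^2\le\calA(\Vhp,\Shp;\Vhp,\Shp)=\ell(\Vhp,\Shp)=0$; since $\Tnorm{\cdot}\DG$ is a genuine norm on $\bT\Th\supseteq\bVp\Th$ (as recalled below \eqref{eq:DGnorm}, using well-posedness of the continuous IBVP), this yields $(\Vhp,\Shp)=(0,\bzero)$, hence invertibility of the system.

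For the quasi-optimality bound \eqref{eq:QuasiOpt} I would first record Galerkin orthogonality: the fluxes being consistent and $(v,\bsigma)\in\bT\Th$ by assumption, the exact solution of \eqref{eq:IBVP} satisfies \eqref{eq:TDG} with test functions ranging over all of $\bT\Th$, in particular over $\bVp\Th$; subtracting the discrete equation gives $\calA(v-\Vhp,\bsigma-\Shp;w,\btau)=0$ for all $(w,\btau)\in\bVp\Th$. Now fix an arbitrary $(w,\btau)\in\bVp\Th$ and set $(\chi,\bxi):=(\Vhp-w,\Shp-\btau)\in\bVp\Th$. Coercivity together with Galerkin orthogonality gives $\Tnorm{(\chi,\bxi)}\DG^2\le\calA(\chi,\bxi;\chi,\bxi)=\calA(v-w,\bsigma-\btau;\chi,\bxi)$, and the continuity estimate \eqref{eq:Continuity} bounds the last term by $C_c\Tnorm{(v-w,\bsigma-\btau)}\DGp\Tnorm{(\chi,\bxi)}\DG$, so $\Tnorm{(\chi,\bxi)}\DG\le C_c\Tnorm{(v-w,\bsigma-\btau)}\DGp$. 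The triangle inequality and $\Tnorm{\cdot}\DG\le\Tnorm{\cdot}\DGp$ then give $\Tnorm{(v-\Vhp,\bsigma-\Shp)}\DG\le(1+C_c)\Tnorm{(v-w,\bsigma-\btau)}\DGp$, and taking the infimum over $(w,\btau)\in\bVp\Th$ proves \eqref{eq:QuasiOpt}.

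For the final a priori stability estimate I would test \eqref{eq:TDG} with $(w,\btau)=(\Vhp,\Shp)$ and use coercivity to get $\Tnorm{(\Vhp,\Shp)}\DG^2\le\calA(\Vhp,\Shp;\Vhp,\Shp)=\ell(\Vhp,\Shp)$. When $g_D=g_N=0$ (or $\FD,\FN$ empty) the bound on $\ell$ stated just before the theorem holds with $\Tnorm{\cdot}\DGp$ replaced by $\Tnorm{\cdot}\DG$ and with no $\FD,\FN$ contributions, so $\ell(\Vhp,\Shp)\le\big(2\Norm{c^{-1}v_0}_{L^2(\FO)}^2+2\Norm{\bsigma_0}_{L^2(\FO)}^2+\Norm{(c/\tht)^{1/2}g_R}_{L^2(\FR)}^2\big)^{1/2}\Tnorm{(\Vhp,\Shp)}\DG$; dividing through (the case $\Tnorm{(\Vhp,\Shp)}\DG=0$ being trivial) gives the claimed bound. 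No step here is a real obstacle: the argument is entirely standard once the coercivity/continuity lemmas are in hand; the only points needing a moment's care are invoking the consistency of the fluxes correctly to obtain Galerkin orthogonality (which relies on $(v,\bsigma)\in\bT\Th$) and the fact that $\Tnorm{\cdot}\DG$ separates points of the Trefftz space, which rests on the well-posedness of the continuous IBVP.
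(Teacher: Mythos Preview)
Your proof is correct and follows exactly the approach the paper takes: the paper simply states that ``combining coercivity and continuity, C\'ea's lemma gives well-posedness and quasi-optimality,'' and your write-up is precisely the standard unfolding of that C\'ea argument (finite-dimensional square system plus coercivity for well-posedness, consistency/Galerkin orthogonality plus continuity for quasi-optimality, and coercivity plus the $\Tnorm{\cdot}\DG$-continuity of $\ell$ for the final stability bound).
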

Bound \eqref{eq:QuasiOpt} controls the Galerkin error in $\Tnorm{\cdot}\DG$ norm only;
we build on this result to bound stronger norms in two different ways.
First, in Section~\ref{s:Energy}, we use an energy argument to control
the norm of the trace of the error on space-like interfaces (see Proposition~\ref{prop:errorSigma}),
%\eqref{eq:EnergyErr1}--\eqref{eq:EnergyErr2} below)}, 
as bound~\eqref{eq:QuasiOpt} only provides control of the \emph{jumps} of the error on the same faces.
%as opposed to the norm of the \emph{jumps} of the error on
%the same faces, see \eqref{eq:EnergyErr1}--\eqref{eq:EnergyErr2} below.
Second,
in Section~\ref{s:MeshIndependent} below, we bound the error in a mesh-independent norm with a duality-type argument inspired by \cite[Theorem~3.1]{MOW99} and based on an auxiliary problem. % and on energy estimates.

% \am{Before that, in the next section we show that the Trefftz-DG method is dissipative, quantifying the energy dissipated by the discrete solution with its jumps, and show ..........}

%\am{------Joachim's idea for space-like faces------}

%--------------------------------------------------------------------------------------------------
\subsection{Energy estimates and error bounds on space-like interfaces}
%{Energy identities and estimates}
\label{s:Energy}

In this section, we introduce the energy functional for the wave equations, show that the Trefftz-DG method is dissipative (quantifying the energy dissipated by the discrete solution through its jumps), and show that energy arguments allow to control the trace of the Trefftz-DG Galerkin error on space-like mesh interfaces.

We call ``space-like interface'' a graph hypersurface 
$$\Sigma=\big\{(\bx,f_\Sigma(\bx)):\; \bx\in \Omega\big\}\subset\overline Q$$ 
where $f_\Sigma:\overline \Omega\to[0,T]$ is a Lipschitz-continuous function whose Lipschitz constant is smaller than $1/c$.
Each space-like mesh face in $\Fspa$ is subset of a space-like interface $\Sigma$.
The future-pointing unit normal vector is defined almost everywhere on $\Sigma$ and denoted by $(\bn^x_\Sigma,n^t_\Sigma)$.
%$(\bn^x_\Sigma,n^t_\Sigma)=(-\nabla f_\Sigma,1)/\sqrt{1+|\nabla f_\Sigma|^2}$.

For sufficiently smooth scalar and vector fields $(w,\btau)$, define their ``\emph{energy}''
% \amnote{\ip{[dimensionally, $[\calE]=[U]^2 m$; if $U$ is a pressure, then
% $[\calE]=K\!g^2 m^{-1} s^{-4}$; assuming that we divide by a density, we
% have $[\calE/\rho]=K\!g\, m^2 s^{-4}$, while the dimension of an energy
% is $K\!g\, m^2 s^{-2}$...]}\\
% AM: I see... any solution?
% Evans page 83 just calls it ``energy'' in inverted commas, and I believe everybody else just uses word energy.
% }
on a space-like interface $\Sigma$ as
$$\calE(\Sigma;w,\btau):=\int_\Sigma \bigg(
w\,\btau\cdot \bn^x_\Sigma+\frac12\Big(c^{-2} w^2+|\btau|^2\Big)n^t_\Sigma
\bigg)\di S.
$$
The energy on constant-time, or ``flat'', space-like interfaces are denoted by $\calE(t;w,\btau):=\calE(\overline\Omega\times\{t\};w,\btau)$, for $0\le t\le T$.

%\amnote{Moved up this paragraph, as it is not related to solutions but to definition of energy only}
Using $|\bn^x_F|\le\gamma c^{-1}n^t_F$ on $\Fspa$ and the weighted Cauchy--Schwarz inequality, we have lower and upper bounds for the energy in terms of $L^2$ norms:
if $\Sigma\subset\Fspa\cup\FO\cup\FT$ is a space-like interface composed by element faces, then for all $ (w,\btau)\in L^2(\Sigma)^{1+n}$
\begin{align}\label{eq:EnergyLowerBound}
&\frac12\int_\Sigma (1-\gamma) n^t_\Sigma\Big(c^{-2} w^2+|\btau|^2\Big)\di S
\le
\calE(\Sigma; w, \btau)
% \\&\nonumber
\le\frac12\int_\Sigma (1+\gamma) n^t_\Sigma\Big(c^{-2} w^2+|\btau|^2\Big)\di S.
\end{align}
In particular, for any non-zero $(w,\btau)\in L^2(\Sigma)^{1+n}$, from $\gamma<1$ we have $\calE(\Sigma;w,\btau)>0$.

For two space-like interfaces $\Sigma_1,\Sigma_2$ with $f_{\Sigma_1}\le f_{\Sigma_2}$ in $\overline \Omega$, we denote the volume between them and its lateral boundary as
\begin{align*}
Q_{\Sigma_1,\Sigma_2}&:=\{(\bx,t): \;\bx\in\Omega,\; f_{\Sigma_1}(\bx)<t<f_{\Sigma_2}(\bx)\},\\
\Gamma_{\Sigma_1,\Sigma_2}&:=\{(\bx,t):\; \bx\in\deO,\; f_{\Sigma_1}(\bx)\le t\le f_{\Sigma_2}(\bx)\}.
\end{align*}
For such interfaces, the following energy identity can be verified integrating by parts:
\begin{align}\label{eq:EnergyId}
\calE(\Sigma_2;w,\btau)=&
\calE(\Sigma_1;w,\btau)-\int_{\Gamma_{\Sigma_1,\Sigma_2}} w\,\btau\cdot\bn^x_\Omega\di S
\\&\nonumber
+\int_{Q_{\Sigma_1,\Sigma_2}}\bigg(
\Big(\nabla w+\der{\btau}t \Big) \cdot \btau
+\Big(\nabla\cdot\btau+c^{-2}\der{w}t\Big) w\bigg)
\di V.
\end{align}
If $(v,\bsigma)$ is the solution of IBVP~\eqref{eq:IBVP}, then we have
$$\calE(\Sigma_2;v,\bsigma)=
\calE(\Sigma_1;v,\bsigma)-\int_{\Gamma_{\Sigma_1,\Sigma_2}} v\,\bsigma\cdot\bn^x_\Omega\di S.
$$
If $g_D=g_N=g_R=0$ in their domains of definition, since $\vartheta\ge0$, we have 
$\calE(\Sigma_2;v,\bsigma)\le\calE(\Sigma_1;v,\bsigma)$, i.e.\ energy is dissipated.
If moreover $\FR=\emptyset$, then
$\calE(\Sigma_2;v,\bsigma)=\calE(\Sigma_1;v,\bsigma)$, 
i.e.\ energy is preserved.

%\ip{[I am writing everything. The old version is commented.]}

% From the coercivity property proved at the beginning of Section~\ref{s:WellP}, we have the inequality
% $\Tnorm{(\Vhp,\Shp)}\DG^2\le\ell(\Vhp,\Shp)$. 
% Expanding the terms in this inequality
% %as in \cite[eq.~(4.17)]{SpaceTimeTDG}, it is easy to prove that
% we verify that the method \eqref{eq:TDG} is \emph{dissipative}:
% %and that energy is dissipated by the discrete solution jumps across mesh interfaces: 
% if $g_D=g_N=g_R=0$, the energy of the discrete solution $(\Vhp,\Shp)$ of the Trefftz-DG formulation satisfies
% \begin{align*}
% \calE(T;\Vhp,\Shp)\le \,&\calE(0;v_0,\bsigma_0)
% \\
% &-\frac12 \Norm{\Big(\frac{1-\gamma}{n^t_F}\Big)^{1/2}c^{-1}\jmp{\Vhp}_t}_{L^2(\Fspa)}^2
% -\frac12\Norm{\Big(\frac{1-\gamma}{n^t_F}\Big)^{1/2}\jmp{\Shp}_t\rule{0pt}{3mm}}_{L^2(\Fspa)^n}^2 \\&
% -\Norm{\alpha^{1/2}\jmp{\Vhp}_\bN}_{L^2(\Ftime)^n}^2
% -\Norm{\beta^{1/2}\jmp{\Shp}_\bN}_{L^2(\Ftime)}^2\\
% &-\Norm{\alpha^{1/2}\Vhp}_{L^2(\FD)}^2
% -\Norm{\beta^{1/2} \Shp\cdot\bn_\Omega^x }_{L^2(\FN)}^2\\
% &-\Norm{\Big(\frac{(1-\delta)\tht}c\Big)^{1/2} \Vhp}_{L^2(\FR)}^2
% -\Norm{\Big(\frac{\delta c}\tht\Big)^{1/2} \Shp\cdot\bn_\Omega^x }_{L^2(\FR)}^2\am{,}%.
% \end{align*}
% \am{where we used that the first term in $\ell(\Vhp,\Shp)$ is
% $2\calE(0;v_0,\bsigma_0)$ (see \eqref{eq:TDG}) and the term on
% $\FO\cup\FT$ in the definition \eqref{eq:DGnorm} of
% $\Tnorm{(\Vhp,\Shp)}\DG^2$ equals
% $\calE(T;\Vhp,\Shp)+\calE(0;v_0,\bsigma_0)$.}

\am{We verify that the method \eqref{eq:TDG} is \emph{dissipative}. Assume that $g_D=g_N=g_R=0$. Since 
\[
\calE(0;w,\btau)=\frac{1}{2}\N{c^{-1}w}_{L^2(\FO)}^2+\frac{1}{2}\N{\btau}_{L^2(\FO)}^2,
\quad
\calE(T;w,\btau)=\frac{1}{2}\N{c^{-1}w}_{L^2(\FT)}^2+\frac{1}{2}\N{\btau}_{L^2(\FT)}^2,
\]
using the definition~\eqref{eq:DGnorm} of the $DG$ norm, we can write
\begin{equation}\label{eq:new1}
\Tnorm{(\Vhp,\Shp)}\DG^2=\calE(0; \Vhp, \Shp)+\calE(T; \Vhp,
\Shp)+{\cal D},
\end{equation}
where
\begin{align*}
%\text{where}\qquad
\cal D:=&
\frac12 \Norm{\Big(\frac{1-\gamma}{n^t_F}\Big)^{1/2}c^{-1}\jmp{\Vhp}_t}_{L^2(\Fspa)}^2
+\frac12\Norm{\Big(\frac{1-\gamma}{n^t_F}\Big)^{1/2}\jmp{\Shp}_t\rule{0pt}{3mm}}_{L^2(\Fspa)^n}^2 \\&
+\Norm{\alpha^{1/2}\jmp{\Vhp}_\bN}_{L^2(\Ftime)^n}^2
+\Norm{\beta^{1/2}\jmp{\Shp}_\bN}_{L^2(\Ftime)}^2\\
&+\Norm{\alpha^{1/2}\Vhp}_{L^2(\FD)}^2
+\Norm{\beta^{1/2} \Shp\cdot\bn_\Omega^x }_{L^2(\FN)}^2\\
&+\Norm{\Big(\frac{(1-\delta)\tht}c\Big)^{1/2} \Vhp}_{L^2(\FR)}^2
+\Norm{\Big(\frac{\delta c}\tht\Big)^{1/2} \Shp\cdot\bn_\Omega^x }_{L^2(\FR)}^2.
\end{align*}
Moreover, since $g_D=g_N=g_R=0$, then
\begin{equation}\label{eq:new2}
\ell_h(\Vhp,\Shp)=\int_{\FO}(c^{-2}v_0\Vhp+\bsigma_0\Shp)\,\di S\le
\calE(0;v_0,\bsigma_0)+\calE(0; \Vhp, \Shp).
\end{equation}
From the coercivity property proved at the beginning of Section~\ref{s:WellP}, we have the inequality
$\Tnorm{(\Vhp,\Shp)}\DG^2\le\ell(\Vhp,\Shp)$ and thus
\begin{align*}
\calE(T; \Vhp, \Shp)&\overset{\eqref{eq:new1}}{=}\Tnorm{(\Vhp,\Shp)}\DG^2-\calE(0; \Vhp,
  \Shp)-{\cal D}\\
&\;\le\; \ell_h(\Vhp,\Shp)-\calE(0; \Vhp,  \Shp)-{\cal D}
\qquad\quad\overset{\eqref{eq:new2}}{\le}\calE(0;v_0,\bsigma_0)-{\cal D}.
\end{align*}}
In other words, energy is dissipated by the \am{terms in $\cal D$, namely, the} discrete solution jumps across mesh interfaces, %\amdelete{by} 
the mismatch with the (homogeneous) Dirichlet and Neumann boundary conditions, and %\amdelete{by}
both Dirichlet and Neumann traces on the Robin boundary.

From the definition of the $\Tnorm{\cdot}\DG$ norm and Theorem~\ref{th:QO}, we also have the following error bound on the energy of the Galerkin error at final time $T$:
\[
\calE(T; v-\Vhp,\bsigma-\Shp)\le \Tnorm{(v-\Vhp,\bsigma-\Shp)}^2\DG\le  (1+C_c)^2 \inf_{(w,\btau)\in\bVp\Th} \Tnorm{(v-w,\bsigma-\btau)}^2\DGp.
\]
Next proposition shows that this bound extends to any space-like interface
  $\Sigma\subset\Fh$.

\begin{prop}\label{prop:errorSigma}
Let $\Sigma\subset\Fh$ be a space-like interface. Then the following
error bound holds true:
\[
\calE(\Sigma; v-\Vhp^-,\bsigma-\Shp^-)
\le \frac52\N{(1-\gamma)^{-1}}_{L^\infty(\Sigma)}(1+C_c)^2
\inf_{(w,\btau)\in\bVp\Th} \Tnorm{(v-w,\bsigma-\btau)}^2\DGp,
\]
with $C_c$ as in \eqref{eq:Continuity}.
% \ipnote{The constant, in the end, degenerates as $\gamma\to 1$. Before shortening this proof, we have to make sure that   everything is correct (and maybe to simplify the constant).}
\end{prop}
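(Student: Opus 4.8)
The plan is to estimate the energy $\calE(\Sigma;v-\Vhp^-,\bsigma-\Shp^-)$ by relating the interface $\Sigma$ to the computational domain's initial and final faces through the energy identity \eqref{eq:EnergyId}, combined with the control already available on the jumps of the discrete error in $\Tnorm{\cdot}\DG$ norm. First I would split $\Sigma$ into the portions lying in the various mesh elements, and for each element $K$ whose boundary meets $\Sigma$ apply the energy identity on the slab of $Q$ between $\Sigma$ and $\FT$ (or between $\FO$ and $\Sigma$), noting that the volume terms in \eqref{eq:EnergyId} vanish for Trefftz fields. This expresses $\calE(\Sigma;\cdot)$ for the discrete solution in terms of $\calE(T;\cdot)$ plus a sum of energy contributions across the intermediate space-like mesh faces, which are exactly jump terms controlled by $\Tnorm{\cdot}\DG$.

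The cleaner route, mirroring the structure of the dissipativity computation above, is to apply the energy argument to the Galerkin error $(v-\Vhp,\bsigma-\Shp)$ directly, but since this is not a Trefftz field in the same clean sense (both parts are, actually, since $\Vhp,\bsigma\hp\in\bVp\Th\subset\bT\Th$ and $(v,\bsigma)$ solves the PDE), the volume terms again vanish. So I would write, telescoping over the space-like faces separating $\Sigma$ from $\FT$,
\begin{align*}
\calE(\Sigma;v-\Vhp^-,\bsigma-\Shp^-)
&= \calE(T;v-\Vhp,\bsigma-\Shp)
+ \sum_{F}\Big(\text{energy mismatch across } F\Big)
- \int_{\Gamma}(v-\Vhp)(\bsigma-\Shp)\cdot\bn_\Omega^x\di S,
\end{align*}
where each energy mismatch across an internal space-like face $F$ equals, after the jump identities \eqref{eq:JumpIds}, a quantity bounded by $\frac1{2n_F^t}\big(c^{-2}\jmp{v-\Vhp}_t^2 + |\jmp{\bsigma-\Shp}_t|^2\big)$ up to the factor $(1-\gamma)^{-1}$; and the lateral boundary integral over $\Gamma\subset\FD\cup\FN\cup\FR$ is likewise controlled by the boundary terms of $\Tnorm{\cdot}\DG$. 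Using the lower bound \eqref{eq:EnergyLowerBound} to pass from $\calE(\Sigma;\cdot)$ to $L^2$ on $\Sigma$ is where the $\N{(1-\gamma)^{-1}}_{L^\infty(\Sigma)}$ factor enters; collecting the $\frac12$'s from the energy definition and the DG norm, together with the mismatch and boundary pieces, yields the constant $\frac52$.

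Then I would invoke Theorem~\ref{th:QO}: each jump and boundary term appearing is dominated by $\Tnorm{(v-\Vhp,\bsigma-\Shp)}_{\mathrm{DG}}^2 \le (1+C_c)^2\inf_{(w,\btau)\in\bVp\Th}\Tnorm{(v-w,\bsigma-\btau)}_{\mathrm{DG}^+}^2$, and $\calE(T;\cdot)$ is one of those terms as well (it is in the DG norm). Summing gives the stated bound.

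The main obstacle I anticipate is bookkeeping the telescoping correctly: the interface $\Sigma$ may cut through elements in a complicated way, so the slab between $\Sigma$ and $\FT$ is not a union of whole elements, and I must make sure the energy identity is applied on the correct subdomains (intersections of elements with the slab $\{f_\Sigma(\bx)<t<T\}$) and that every internal space-like face crossed contributes with the right sign a jump term already present in $\cal D$, rather than an uncontrolled quantity. The weighting by $(1-\gamma)^{-1}$ and keeping the multiplicative constant down to $\frac52$ requires care in combining \eqref{eq:EnergyLowerBound} with the weights in \eqref{eq:DGnorm}, but this is routine once the telescoping is set up. Handling the lateral boundary contribution on $\FR$ (where energy is only dissipated, not conserved) is straightforward since $\vartheta\ge0$ makes that term have a favourable sign or be absorbed into the DG norm's Robin terms.
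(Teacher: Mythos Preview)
Your telescoping strategy has a genuine gap. Write $(e,\beps):=(v-\Vhp,\bsigma-\Shp)$. Telescoping the energy from $\FO$ to $\Sigma$ element by element produces, on each intermediate space-like face $F\subset\Fspa$, the mismatch $\calE(F;e^+,\beps^+)-\calE(F;e^-,\beps^-)$. Expanding this with the jump identities \eqref{eq:JumpIds} gives
\[
\calE(F^-)-\calE(F^+)
=\int_F\Big[c^{-2}e^-\jmp{e}_t+\beps^-\!\cdot\jmp{\beps}_t+e^-\jmp{\beps}_\bN+\beps^-\!\cdot\jmp{e}_\bN
-\tfrac{1}{2n^t_F}\big(c^{-2}\jmp{e}_t^2+|\jmp{\beps}_t|^2\big)
-\tfrac{1}{n^t_F}\jmp{e}_t\jmp{\beps}_\bN\Big]\di S,
\]
which is \emph{not} bounded by squared jumps alone: it contains products of the traces $e^-,\beps^-$ with jumps. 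The $\Tnorm{\cdot}\DG$ norm controls only jumps of the error on $\Fspa$, not its traces, so you cannot absorb these mismatch terms into $\Tnorm{(e,\beps)}\DG^2$ and then invoke Theorem~\ref{th:QO}. (Attempting to bound the trace--jump products by Cauchy--Schwarz reintroduces $\calE(F;e^-,\beps^-)$ on the right, and iterating gives a constant that grows with the number of time levels, which the proposition does not allow.) Incidentally, the geometric worry you raise is not the obstacle: since $\Sigma\subset\Fh$, the interface is a union of element faces and cuts no elements.

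The paper sidesteps this entirely by a different idea: it re-poses the Trefftz--DG problem on the truncated domain $Q_\Sigma=\{0<t<f_\Sigma(\bx)\}$, with $\Sigma$ playing the role of the final-time face. Because the fluxes on space-like faces are upwind (depend only on $(\cdot)^-$), the discrete solution on $Q_\Sigma$ is exactly the restriction of $(\Vhp,\Shp)$. On this subproblem the restricted DG norm $\Tnorm{\cdot}\DGQ$ contains the \emph{trace} of the error on $\Sigma$ (as $\Sigma$ is now ``$\FT$''), so coercivity and continuity yield quasi-optimality and hence the energy bound on $\Sigma$ directly---no telescoping is needed. The factor $\tfrac52$ then comes from $(1+\gamma)<2$ in \eqref{eq:EnergyLowerBound} together with the comparison $\Tnorm{(\cdot)_{|Q_\Sigma}}\DGQp^2\le\tfrac54\Tnorm{\cdot}\DGp^2$, not from the count you suggest.
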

\begin{proof}
Setting  
\[
Q_\Sigma:=\{(\bx,t): \;\bx\in\Omega,\; 0<t<f_{\Sigma}(\bx)\}, 
\]
the analytic solution of the IBVP analogue to \eqref{eq:IBVP} posed on $Q_\Sigma$ coincides with the restriction to $Q_\Sigma$ of the analytic solution of \eqref{eq:IBVP} (posed on the whole of $Q$).
The Trefftz-DG method on $Q_\Sigma$ gives a well-posed discrete problem and, since the numerical fluxes on $\Sigma$ are defined as $\hVhp=\Vhp^-$ and $\hShp=\Shp^-$, its solution coincides with the restriction to $Q_\Sigma$ of the solution of~\eqref{eq:TDG}.

Define $\bT_\Sigma\Th:=\{(w,\btau)\in\bT\Th,\supp(w,\btau)\subset\overline{Q_\Sigma}\}$ and denote by $\Tnorm{\cdot}\DGQ$ and $\Tnorm{\cdot}\DGQp$ the restriction to $\bT_\Sigma\Th$ of $\Tnorm{\cdot}\DG$ and $\Tnorm{\cdot}\DGp$, respectively.
In particular, for $(w,\btau)\in\bT_\Sigma\Th$, the terms on $\Sigma$ appearing in $\Tnorm{(w,\btau)}\DGQ^2$ and $\Tnorm{(w,\btau)}\DGQp^2$ are
\begin{align*}
&\frac12\Norm{(1-\gamma)^{1/2}(n_\Sigma^t)^{1/2}c^{-1}w^-}^2_{L^2(\Sigma)} 
+\frac12\Norm{(1-\gamma)^{1/2}(n_\Sigma^t)^{1/2}\btau^- \rule{0pt}{3mm}}^2_{L^2(\Sigma)^n}
\qquad\text{and}\\
&
2\Norm{\Big(\frac{n^t_F}{1-\gamma}\Big)^{1/2} c^{-1}w^-}_{L^2(\Sigma)}^2
+2\Norm{\Big(\frac{n^t_F}{1-\gamma}\Big)^{1/2}\btau^-}_{L^2(\Sigma)^n}^2,
\end{align*}
respectively.
Continuity and coercivity of the bilinear form $\calA$ hold for the subspace $\bT_\Sigma\Th$, thus, since the functions in $\bVp\Th$ are discontinuous,
\[
\Tnorm{(v-v_{hp},\bsigma-\bsigma_{hp})_{|_{Q_\Sigma}}}\DGQ
\le(1+C_c)
\inf_{(w,\btau)\in\bVp(\calT_h)%\cap \bT_\Sigma\Th
}\Tnorm{(v-w,\bsigma-\btau)_{|_{Q_\Sigma}}}\DGQp.
\]
This, together with the right bound of~\eqref{eq:EnergyLowerBound}, the definition
of $\Tnorm{\cdot}\DGQ$, and $1+\gamma< 2$, gives
\[
\calE(\Sigma; v-\Vhp^-,\bsigma-\Shp^-)
\le \N{\frac2{1-\gamma}}_{L^\infty(\Sigma)}(1+C_c)^2
% \inf_{(w,\btau)\in\bVp(\calT_h\cap Q_\Sigma)}
\inf_{(w,\btau)\in\bVp(\calT_h)%\cap \bT_\Sigma\Th
}\Tnorm{(v-w,\bsigma-\btau)_{|_{Q_\Sigma}}}^2\DGQp.
\]
For any $(w,\btau)\in\bT\Th$, $\Tnorm{(w,\btau)}\DGp$ contains a term with the traces $(w^-,\btau^-)$ on $\Sigma$ and one with the jumps $(\jmp{w}_t,\jmp{\btau}_t)$, while $\Tnorm{(w,\btau)_{|_{Q_\Sigma}}}\DGQp$ contains two terms with the traces only.
Thus, using $\gamma<1$ and $\frac{1-\gamma}2+\frac2{1-\gamma}\le \frac54\frac2{1-\gamma}$, we have
$$
\Tnorm{(w,\btau)_{|_{Q_\Sigma}}}\DGQp^2
\le \frac54
\Tnorm{(w,\btau)}\DGp^2
\qquad \forall (w,\btau)\in\bT\Th,
$$
from which the proof of the statement is complete. 
\end{proof}
In other words, Proposition~\ref{prop:errorSigma} says that, on every space-like mesh face $F$ contained in a space-like interface $F\subset\Sigma\subset\Fspa$, the Trefftz-DG error is controlled in $L^2(F)^{1+n}$ norm by the approximation properties of the discrete space.

\subsection{Error bounds in mesh-independent norms}\label{s:MeshIndependent}
%\label{s:Duality}

The bound~\eqref{eq:QuasiOpt} of Theorem~\ref{th:QO} allows to control the Trefftz-DG error on the mesh skeleton only.
We want to control the error in the space--time $L^2$-norm; we can do this only under some further assumptions (\S\ref{s:Stability}), while in the general case we can obtain a bound in a weaker mesh-independent norm (\S\ref{s:StabilityX}). 
To this purpose, we consider a general linear space $\bX\subset L^2(Q)^{1+n}$
with norm $\N{(w,\btau)}_\bX\ge C_X\N{(w,\btau)}_{L^2(Q)^{1+n}}$ possibly stronger than $L^2(Q)^{1+n}$, independent of the mesh $\calT_h$, and define the dual norm:
\begin{align}\label{eq:Xstar}
\N{(w,\btau)}_{\bX^*}:=
\sup_{\bzero\ne(\psi,\bPhi)\in \bX} \frac{\int_Q(w\psi+\btau\cdot\bPhi)\di x\di t}{\N{(\psi,\bPhi)}_\bX}.
%\le\Cstab \Tnorm{(w,\btau)}\DG
\end{align}
The $\bX^*$ norm is not stronger than the $L^2(Q)^{1+n}$ norm:
$\N{(w,\btau)}_{\bX^*}\le C_X^{-1}\N{(w,\btau)}_{L^2(Q)^{1+n}}$ for all $(w,\btau)\in L^2(Q)^{1+n}$.

% \am{The case of one space dimension $n=1$ was settled for $\N{(w,\btau)}_{\bX}=\N{(w,\btau)}_{L^2(Q)^{1+n}}$ in....}

\subsubsection{Auxiliary problem and duality argument}
\label{s:Duality}
To control the $\bX^*$ norm of the Trefftz-DG error, we first consider the auxiliary inhomogeneous IBVP
\begin{align}\label{eq:zzIBVP}
\left\{\begin{aligned}
&\nabla z+ \partial\bzeta/\partial t %\der{\bzeta}t 
= \bPhi &&\iin Q,\\
&\nabla\cdot\bzeta+c^{-2} \,\partial z/\partial t %\frac1{c^2}\der{z}t 
= \psi &&\iin Q,\\
&z(\cdot,0)=0, \quad \bzeta(\cdot,0)=\bzero\qquad &&\oon \Omega,\\
&z=0 &&\oon \GD\times I,\\
&\bzeta\cdot\bn_\Omega^x=0 &&\oon \GN\times I,\\
&\frac\tht c z-\bzeta\cdot\bn_\Omega^x=0 &&\oon \GR\times I,
\end{aligned}\right.\end{align}
with data $\psi\in L^2(Q),\bPhi\in L^2(Q)^n$.
We also define a mesh-skeleton seminorm for a pair $(z,\bzeta)\in L^2(Q)^{1+n}$ with sufficiently regular traces:
\begin{align}
\abs{(z,\bzeta)}_{\Fh}:=&
\Bigg(2\Norm{\Big(\frac{(1+\gamma^2)n^t_F}{1-\gamma}\Big)^{1/2}c^{-1}z}^2_{L^2(\Fspa\cup\FT)}
+2\Norm{\Big(\frac{(1+\gamma^2)n^t_F}{1-\gamma}\Big)^{1/2}\bzeta}^2_{L^2(\Fspa\cup\FT)^n}
\nonumber\\&
+\Norm{\alpha^{-1/2}\bzeta\cdot\bn^x_F}^2_{L^2(\Ftime\cup\FD)}
+\Norm{\beta^{-1/2}z}^2_{L^2(\Ftime\cup\FN)}
\nonumber\\&
+\Norm{\Big(\frac c{(1-\delta)\tht}\Big)^{1/2}\bzeta\cdot\bn^x_\Omega}^2_{L^2(\FR)}
%\\&
+\Norm{\Big(\frac \tht{\delta c}\Big)^{1/2}z}^2_{L^2(\FR)}
\Bigg)^{1/2}.
\label{eq:FhNorm}
\end{align}
In the next proposition, we show that the $\Tnorm{\cdot}\DG$ norm of a
Trefftz field controls a mesh-independent norm of the same field. 
In particular, this holds for the Trefftz-DG error.
%of the Trefftz-DG error (or any other Trefftz field), which is constituted by jumps and traces \eqref{eq:DGnorm} on $\Fh$, controls a mesh-independent norm.
The key assumption is the following one. %existence of 

\begin{assum}\label{ass:DualStability}
There exists a positive constant $\Cstab$,
depending on the domain $Q$, on the mesh $\calT_h$ (thus on $\gamma$),
and on the parameters $c,\tht,\alpha,\beta,\delta$, such that
\begin{align}\nonumber
% \am{\forall\psi\in L^2(Q),\forall\bPhi\in L^2(Q)^n,}&
\forall(\psi,\bPhi)\in \bX,&
\text{ the solution $(z,\bzeta)$ of \eqref{eq:zzIBVP} satisfies the stability bound:}\\
&\abs{(z,\bzeta)}_{\Fh}
\le\Cstab \Norm{(\psi,\bPhi)}_\bX.
%\Big(\Norm{\bPhi}^2_{L^2(Q)^n}+\Norm{c\psi}^2_{L^2(Q)}\Big)^{1/2}.
% \forall \psi\in L^2(Q),\bPhi\in L^2(Q)^n
\label{eq:DualStability}
\end{align}
\end{assum}

\am{The quantity $\Norm{(\psi,\bPhi)}_\bX$ on the right-hand side of \eqref{eq:DualStability} is independent of $\calT_h$, while the left-hand side is an integral over the mesh skeleton (see \eqref{eq:FhNorm}): this implies that the value of $\Cstab$ necessarily blows up when the mesh $\calT_h$ is uniformly refined.}

Conditions under which Assumption~\ref{ass:DualStability}
%bound \eqref{eq:DualStability} 
holds are given in \S\ref{s:Stability}, for $\N{\cdot}_\bX$ equal to the   $L^2(Q)^{1+n}$ norm, and in \S\ref{s:StabilityX}, for $\N{\cdot}_\bX$ stronger than the $L^2(Q)^{1+n}$ norm.

\begin{proposition}\label{prop:Duality}
If Assumption~\ref{ass:DualStability} is satisfied, then for all Trefftz fields $(w,\btau)\in\bT\Th$
\begin{align}
%\Big(\Norm{c^{-1}w}_{L^2(Q)}^2+\Norm{\btau}_{L^2(Q)^n}^2\Big)^{1/2}
\N{(w,\btau)}_{\bX^*}
\le \Cstab \Tnorm{(w,\btau)}\DG.
%\qquad \forall (w,\btau)\in\bT\Th.
\label{eq:zzDualityBound}
\end{align}
\end{proposition}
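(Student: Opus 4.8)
The plan is to estimate the numerator in the definition \eqref{eq:Xstar} of $\N{(w,\btau)}_{\bX^*}$ by testing the auxiliary problem \eqref{eq:zzIBVP} against the Trefftz field $(w,\btau)$. First I would fix an arbitrary $(\psi,\bPhi)\in\bX$, let $(z,\bzeta)$ denote the corresponding solution of \eqref{eq:zzIBVP}, and write
\[
\int_Q(w\psi+\btau\cdot\bPhi)\di x\di t
=\int_Q\Big(w\big(\nabla\cdot\bzeta+c^{-2}\partial_t z\big)
+\btau\cdot\big(\nabla z+\partial_t\bzeta\big)\Big)\di V.
\]
Then I would integrate by parts elementwise; since $(w,\btau)\in\bT\Th$ satisfies the Trefftz property, the volume contributions from the derivatives of $w$ and $\btau$ cancel, leaving only boundary integrals over each $\deK$. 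Summing over $K\in\calT_h$ and regrouping the face contributions using the average/jump identities (as in \eqref{eq:JumpIds} and the computation preceding Theorem~\ref{th:QO}), together with the homogeneous initial and boundary conditions satisfied by $(z,\bzeta)$ in \eqref{eq:zzIBVP}, I expect the skeleton sum to reduce to an expression involving, on $\Fspa$, the jumps $\jmp{w}_t,\jmp{\btau}_t$ paired against the traces of $z,\bzeta$; on $\FT$, the traces of $w,\btau$ against those of $z,\bzeta$; on $\Ftime$ the jumps of $w,\btau$ against traces of $z,\bzeta$; and the corresponding boundary terms on $\FD,\FN,\FR$. (The contributions on $\FO$ vanish because $z(\cdot,0)=0$, $\bzeta(\cdot,0)=\bzero$.)

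Next I would apply a weighted Cauchy--Schwarz inequality to each face term, splitting the weights so that the factors carrying $w,\btau$ reconstruct exactly the terms appearing in $\Tnorm{(w,\btau)}\DG^2$ (see \eqref{eq:DGnorm}) and the factors carrying $z,\bzeta$ reconstruct exactly the terms in $\abs{(z,\bzeta)}_\Fh^2$ (see \eqref{eq:FhNorm}). The weights $(1-\gamma)/n^t_F$ versus $(1+\gamma^2)n^t_F/(1-\gamma)$ on $\Fspa$, and $\alpha,\beta,(1-\delta)\tht/c,\delta c/\tht$ versus their reciprocals, are designed precisely so that this pairing closes; in particular the bound \eqref{eq:JumpIneq}, namely $\abs{\jmp{w}_\bN}\le(\gamma/c)\abs{\jmp{w}_t}$, is what lets the $\jmp{w}_\bN$-type cross terms on $\Fspa$ be absorbed into the $\jmp{w}_t$ norm at the cost of the extra $(1+\gamma^2)$ factor on the $z,\bzeta$ side. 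This yields
\[
\Big|\int_Q(w\psi+\btau\cdot\bPhi)\di x\di t\Big|
\le \Tnorm{(w,\btau)}\DG\,\abs{(z,\bzeta)}_\Fh.
\]

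Finally I would invoke Assumption~\ref{ass:DualStability}, i.e.\ $\abs{(z,\bzeta)}_\Fh\le\Cstab\N{(\psi,\bPhi)}_\bX$, to get $\big|\int_Q(w\psi+\btau\cdot\bPhi)\big|\le\Cstab\Tnorm{(w,\btau)}\DG\N{(\psi,\bPhi)}_\bX$; dividing by $\N{(\psi,\bPhi)}_\bX$ and taking the supremum over $\bzero\ne(\psi,\bPhi)\in\bX$ gives \eqref{eq:zzDualityBound}. The main obstacle is the bookkeeping in the integration-by-parts/regrouping step: one must be careful that $z,\bzeta$ are single-valued across interior faces (so their jumps vanish and only the jumps of $w,\btau$ survive) and that the geometric weights are distributed so that \emph{every} skeleton term of $\abs{(z,\bzeta)}_\Fh$ is matched and no uncontrolled term (e.g.\ a full tangential jump of $\btau$ on $\Ftime$, cf.\ Remark~\ref{rem:FullyUnstructured}) appears — which is exactly why assumption \eqref{eq:HorVerFaces} and the present choice of numerical fluxes are needed. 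The cross terms on $\Fspa$ involving $\jmp{\btau}_\bN$ and $\jmp{w}_\bN$ (from the $\frac1{n^t_F}\jmp{w}_t\jmp{\btau}_\bN$-type identity) require the most care, and are handled via \eqref{eq:JumpIneq} together with an elementary inequality of the form $2ab\le \gamma a^2 + \gamma^{-1}b^2$ tuned to produce the $1+\gamma^2$ weight.
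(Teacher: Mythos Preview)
Your proposal is correct and follows the same route as the paper. The one step you label ``bookkeeping'' is where the paper invests the most care: it proves that the relevant jumps of $(z,\bzeta)$ vanish across interior faces by observing that the space--time fields $(\bzeta,c^{-2}z)$ and $(z\be_j,\bzeta_j)$, $1\le j\le n$, lie in $H(\mathrm{div}_{x,t};Q)$ (so their space--time normal jumps vanish across any hyperplane), and then uses the non-characteristic condition \eqref{eq:HorVerFaces} in a short linear-algebra argument to deduce that $\jmp{z}_t,\jmp{\bzeta}_t$ vanish on $\Fspa$ and $\jmp{z}_\bN,\jmp{\bzeta}_\bN$ vanish on $\Ftime$.
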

\begin{proof}
%Step 1: 
We first prove the vanishing of certain jumps across mesh faces for the solution $(z,\bzeta)$ of the inhomogeneous auxiliary problem \eqref{eq:zzIBVP}:
$\jmp{z}_t$ and $\jmp{\bzeta}_t$ on $\Fspa$ and of $\jmp{z}_\bN$ and $\jmp{\bzeta}_\bN$ on $\Ftime$.
%We first prove that the jumps of the solution $(z,\bzeta)$ of the auxiliary problem \eqref{eq:zzIBVP} across space-like mesh interfaces vanish, and that the jump of $z$ and the normal jump of $\bzeta$ vanish across time-like interfaces.
%
% Two possibilities.
% 
% (Version 1)
% Assume $(\psi,\bPhi)\in \calD(Q)^{1+n}$, then $(z,\bzeta)$ should be at least continuous, so proof below works by density in $L^2(Q)^{1+n}$.
% Regularity for Dirichlet BCs and smooth domains in \S7.2.3 Theorems 6--7 of Evans.
% Haven't looked for proof in more general case.
%
%(Version 2)
Given a hyperplane $\Pi=\{\bn^x_\Pi\cdot \bx+n^t_\Pi\,t=C_\Pi\}$, 
%denote its unit normal by $\widetilde n_\Pi=(\bn^x_\Pi,n^t_\Pi)$ choosing one orientation.
denote the scalar jump of a
function defined in $Q\setminus\Pi$ and admitting traces on $\Pi$ from both sides
% piecewise continuous function $f$ \ip{[enough for what follows??]}
as $\jmp{f}_\Pi:=f_{|\{\bn^x_\Pi\cdot \bx+n^t_\Pi\,t>C_\Pi\}}-f_{|\{\bn^x_\Pi\cdot \bx+n^t_\Pi\,t<C_\Pi\}}$.
From \eqref{eq:zzIBVP}, the fields $(\bzeta, c^{-2}z)$ and $(z\be_j,\bzeta_j)$, $1\le j\le n$ ($\be_j$ denoting the standard basis elements of $\IR^n$), are in $H(\mathrm{div}_{x,t};Q)$, thus their \textit{normal} jumps vanish\footnote{\am{Recall
that if $\bF\in H(\dive;D)$, $D=D_1\cup D_2\cup\Sigma\subset\IR^d$, $d\in\IN$, where $D,D_1,D_2$ are Lipschitz domains with outward-pointing unit vectors $\bn, \bn_1,\bn_2$, respectively and $\Sigma$ is a Lipschitz hypersurface separating $D_1$ and $D_2$, then, by the divergence theorem and the well-definiteness of the normal traces in $H(\dive;D)$ \cite[eq.~(2.6)]{ABD98},
\begin{align*}
\int_\Sigma (\bF_{|_{D_1}}\cdot\bn_1+ \bF_{|_{D_2}}\cdot\bn_2)\di S
&= \int_{\partial D_1}\bF_{|_{D_1}}\cdot\bn_1\di S
+ \int_{\partial D_2}\bF_{|_{D_2}}\cdot\bn_2\di S
- \int_{\partial D}\bF\cdot\bn\di S\\
&= \int_{D_1} \nabla\cdot\bF_{|_{D_1}}\di V
+ \int_{D_2} \nabla\cdot\bF_{|_{D_2}}\di V
- \int_{D} \nabla\cdot\bF\di V
=0.
\end{align*}
}}
across any space--time Lipschitz interface in $Q$ and in particular across $\Pi$:
$$\jmp{\bzeta\cdot\bn^x_\Pi + c^{-2}z n^t_\Pi}_\Pi=\jmp{z(\bn^x_\Pi)_j + \bzeta_j n^t_\Pi}_\Pi=0
\qquad 1\le j\le n.$$
Thus, on time-like faces $n^t_\Pi=0$, the jump of $z$ and the normal jump of $\bzeta$ vanish.
On constant-time faces ($\bn^x_\Pi=\bzero, n^t_\Pi=\pm1$) all jumps vanish \am{(recall that $c$ may jump only across time-like faces)}.
On other hyperplanes, $n^t_\Pi\ne0$ and $|\bn^x_\Pi|\ne 0$, thus
$$(-c^2/n^t_\Pi)\jmp{\bzeta\cdot\bn^x_\Pi }_\Pi= \jmp{z}_\Pi
=(-n^t_\Pi/|\bn^x_\Pi|^2)\jmp{\bzeta\cdot\bn^x_\Pi }_\Pi.$$
If $n^t_\Pi/|\bn^x_\Pi|\ne c$ then we have immediately $\jmp{z}_\Pi=\jmp{\bzeta\cdot\bn^x_\Pi }_\Pi=0$  and from above, $\jmp{\bzeta_j}_\Pi=0$ for all $1\le j\le n$. 
Assumption \eqref{eq:HorVerFaces} guarantees that $n^t_\Pi/|\bn^x_\Pi|> c$ on $\Fspa$, so we conclude that all jumps vanish.
(We have simply shown that the discontinuities of solutions of the first-order wave equations with source term in $L^2(Q)^{n+1}$ propagate along characteristics.)

% %Step 2.
% Since the $L^2(Q)$ norm of  $(w,\btau)\in\bT\Th$ can be computed as
% \begin{align}
% \label{eq:L2sup}
% &\big(\Norm{c^{-1}w}_{L^2(Q)}^2+\Norm{\btau}_{L^2(Q)}^2\big)^{1/2}
% \\&=
% \sup_{(\bPhi,\psi)\in L^2(Q)^{n+1}}\frac{\iint_Q(w\psi+\btau\cdot\bPhi)\di x\di t}{\big(\Norm{\bPhi}^2_{L^2(Q)^n}+\Norm{c\psi}^2_{L^2(Q)}\big)^{1/2}},
% \nonumber
% \end{align}
Since we want to control the $\bX^*$ norm defined in \eqref{eq:Xstar},
we now take the scalar product of the Trefftz field $(w,\btau)$ with the source terms $(\psi,\bPhi)$ of problem \eqref{eq:zzIBVP} and integrate by parts in each element:
%\amnote{double check: $\frac{1+\gamma}{1-\gamma}$ or $\frac{1+\gamma^2}{1-\gamma}$?}
\begin{align*}
\int_Q(w\psi+\btau\cdot \bPhi)\di V
\overset{\eqref{eq:zzIBVP}}=\!&
\sum_{K\in\calT_h}\int_K \bigg(w\nabla\cdot\bzeta+c^{-2}{w}\der zt
+\btau\cdot\nabla z+\btau\cdot \der\bzeta t\bigg)\di V\\
=&\sum_{K\in\calT_h}\int_\deK \Big(
w\bzeta\cdot\bn^x_K+\btau\cdot\bn^x_K z+c^{-2}{wz n^t_K}+\btau\cdot \bzeta n^t_K\Big)\di S
\\
=&\int_\Fspa\underbrace{\jmp{w\bzeta+\btau z}_\bN+\jmp{{c^{-2}}wz +\btau\cdot \bzeta}_t}_{
\le c^{-1}|\jmp{w}_t|(\gamma |\bzeta|+c^{-1}|z|)+|\jmp{\btau}_t|(\gamma c^{-1}|z|+|\bzeta|)}
\di S
\\&
+\int_{\FT}\Big(c^{-2}{wz} +\btau\cdot \bzeta\Big)\di S
%\\&
-\int_{\FO}\Big(c^{-2}w\underbrace{z}_{=0} 
+\btau\cdot \underbrace{\bzeta}_{=\bzero}\Big)\di S
\\&
+\int_{\Ftime}\underbrace{\jmp{w\bzeta+\btau z}_\bN}_{=\jmp{w}_\bN\cdot\bzeta+\jmp{\btau}_\bN z}\di S
\\&
% +\int_{\FD}\big(w\bzeta\cdot\bn^x_\Omega+\btau\cdot\bn^x_\Omega \underbrace{z}_{=0}\big)\di S \\&
% +\int_{\FN}\big(w\underbrace{\bzeta\cdot\bn^x_\Omega}_{=0}+\btau\cdot\bn^x_\Omega z\big)\di S \\&
% +\int_{\FR}\big(w\bzeta\cdot\bn^x_\Omega+\btau\cdot\bn^x_\Omega z\big)\di S \\&
+\int_{\FD\cup\FN\cup\FR}\big(w\underbrace{\bzeta\cdot\bn^x_\Omega}_{=0 \oon\FN}
+\btau\cdot\bn^x_\Omega \underbrace{z}_{=0\oon \FD}\big)\di S
\\
\le&
\Tnorm{(w,\btau)}\DG \cdot\bigg(
2\Norm{\Big(\frac{(1+\gamma^2)n^t_F}{1-\gamma}\Big)^{1/2}c^{-1}z}^2_{L^2(\Fspa)}
\\&
+2\Norm{\Big(\frac{(1+\gamma^2)n^t_F}{1-\gamma}\Big)^{1/2}\bzeta}^2_{L^2(\Fspa)^n}
%\\&
+2\Norm{c^{-1}z}^2_{L^2(\FT)}
+2\Norm{\bzeta}^2_{L^2(\FT)^n}
\\&
+\Norm{\alpha^{-1/2}\bzeta\cdot\bn^x_F}^2_{L^2(\Ftime\cup\FD)}
+\Norm{\beta^{-1/2}z}^2_{L^2(\Ftime\cup\FN)}
% \\&
% +\Norm{\alpha^{-1/2}\bzeta\cdot\bn^x_\Omega}^2_{L^2(\FD)}
% +\Norm{\beta^{-1/2}z}^2_{L^2(\FN)}
\\&
+\Norm{\Big(\frac c{(1-\delta)\tht}\Big)^{1/2}\bzeta\cdot\bn^x_\Omega}^2_{L^2(\FR)}
+\Norm{\Big(\frac \tht{\delta c}\Big)^{1/2}z}^2_{L^2(\FR)}
\bigg)^{1/2}
\\
\overset{\eqref{eq:DualStability}}\le& \Cstab\Tnorm{(w,\btau)}\DG
\Norm{(\psi,\bPhi)}_\bX.
%\big(\Norm{\bPhi}^2_{L^2(Q)^n}+\Norm{c\psi}^2_{L^2(Q)}\big)^{1/2}.
\end{align*}
Inserting this bound in the
definition \eqref{eq:Xstar} of the $\bX^*$ norm
%expansion \eqref{eq:L2sup} of the $L^2(Q)$ norm
of $(w,\btau)$, we obtain assertion~\eqref{eq:zzDualityBound}.
\end{proof}

If Assumption~\ref{ass:DualStability} is verified with $\bX=L^2(Q)^{1+n}$,
from Proposition \ref{prop:Duality} and the quasi-optimality Theorem \ref{th:QO}, it follows that the $L^2(Q)$ norm of the Trefftz-DG error is controlled by the $\Tnorm{\cdot}\DGp$ norm of the best-approximation error.
% \begin{align}\label{eq:L2Error}
% \mmbox{\begin{aligned}
% &\Big(\Norm{c^{-1}(v-\Vhp)}_{L^2(Q)}^2+\Norm{\bsigma-\Shp}_{L^2(Q)^n}^2\Big)^{1/2}\\
% &\le  \Cstab(1+C_c) \inf_{(w,\btau)\in\bV\Th} \Tnorm{(v-w,\bsigma-\btau)}\DGp,
% \end{aligned}}%\nonumber
% \end{align}
In \cite{SpaceTimeTDG}, bound \eqref{eq:DualStability} with $\bX=L^2(Q)^{1+n}$ was proven in one space dimension on meshes made of rectangular elements aligned to the space--time axes and $\Cstab$ was computed explicitly.
Two proofs were given.
One of them (Appendix~A of \cite{SpaceTimeTDG}) relies on the use of the exact value of $(z,\bzeta)$ in $Q$ computed with Duhamel's principle, 
%\cite[\S2.4.2]{EVA02}, 
and cannot be easily extended to general domains in higher space dimensions, as it require a suitable periodic extension of the IBVP \eqref{eq:zzIBVP} to $\IR^n\times(0,T)$.
The second proof (Lemma~4.9 of \cite{SpaceTimeTDG}) uses an energy argument to control the traces on space-like faces in \eqref{eq:DualStability} and an integration by parts trick to bound the traces on time-like faces.
%It turns out that i
In higher space dimensions, the energy argument carries over, while the traces on time-like faces are harder to control.
In \S\ref{s:Stability} we follow this idea and prove Assumption~\ref{ass:DualStability} in any dimension, under two additional assumptions, namely $\Ftime=\emptyset$ and $\GD=\GN=\emptyset$,
to get around the need to control traces on time-like faces.
We will make use of the energy identities and bounds discussed in Section~\ref{s:Energy}.
The traces of $z$ and $\bzeta$ on time-like faces are controlled by a stronger norm of $(\psi,\bPhi)$ in Proposition~\ref{prop:TimeFaceBounds}.

%------------------------------------------------------------------------------------------------

\subsubsection{Stability of the auxiliary problem: case without time-like faces}\label{s:Stability}

For meshes with no time-like faces, Assumption~\ref{ass:DualStability} is satisfied with $\bX=L^2(Q)^{1+n}$.
This is a consequence of following stability bound.

% the stability estimate \eqref{eq:DualStability} holds true for the norm
% $\Norm{(\psi,\bPhi)}_{\bX}^2=\N{c\psi}^2_{L^2(Q)}+\N{\bPhi}^2_{L^2(Q)^n}$ and 
\begin{proposition}\label{prop:DualStability}
For all $(\psi,\bPhi)\in L^2(Q)^{1+n}$, the solution $(z,\bzeta)$ of the IBVP \eqref{eq:zzIBVP} satisfies the bound
\begin{align}
&\Bigg(2\Norm{\Big(\frac{(1+\gamma^2)n^t_F}{1-\gamma}\Big)^{1/2}c^{-1}z}^2_{L^2(\Fspa\cup\FT)}
+2\Norm{\Big(\frac{(1+\gamma^2)n^t_F}{1-\gamma}\Big)^{1/2}\bzeta}^2_{L^2(\Fspa\cup\FT)^n}
\nonumber\\&\nonumber
+\Norm{\Big(\frac c{(1-\delta)\tht}\Big)^{1/2}\bzeta\cdot\bn^x_\Omega}^2_{L^2(\FR)}
+\Norm{\Big(\frac \tht{\delta c}\Big)^{1/2}z}^2_{L^2(\FR)}
\Bigg)
\\&
\le\tCstab^2\Big(\N{c\psi}^2_{L^2(Q)}+\N{\bPhi}^2_{L^2(Q)^n}\Big)
\label{eq:NoTimeLikeStability}
\end{align}
with constant
\begin{align}\label{eq:CstabNoTimeLike}
\tCstab^2=2T\bigg(
N\Norm{\frac{4(1+\gamma^2)}{(1-\gamma)^2}}_{L^\infty(\Fspa)}
%\\&\qquad
+\Norm{\frac1{\delta(1-\delta)}}_{L^\infty(\FR)}
\bigg),
\end{align}
where $N$ is the minimal number of space-like interfaces $\Sigma_1,\ldots,\Sigma_N$ such that 
$\Fspa\subset\bigcup_{1\le j\le N-1}\Sigma_j$ and $0\le f_{\Sigma_1}\le\cdots\le f_{\Sigma_{N-1}}\le f_{\Sigma_N}=T$.
\end{proposition}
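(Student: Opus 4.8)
The plan is to prove \eqref{eq:NoTimeLikeStability} by a pure energy argument based on the identity \eqref{eq:EnergyId}, exploiting that under the standing assumptions of this subsection ($\Ftime=\emptyset$ and $\GD=\GN=\emptyset$) the only lateral boundary term arising in \eqref{eq:EnergyId} for the solution $(z,\bzeta)$ of \eqref{eq:zzIBVP} is the Robin one, $\int \frac\tht c z^2\di S\ge0$: indeed on $\GR$ the boundary condition gives $\bzeta\cdot\bn_\Omega^x=\frac\tht c z$, while on $\GD$, $\GN$ the corresponding products would vanish. Throughout I abbreviate $M^2:=\N{c\psi}_{L^2(Q)}^2+\N{\bPhi}_{L^2(Q)^n}^2$, $A:=\N{c^{-1}z}_{L^2(Q)}^2+\N{\bzeta}_{L^2(Q)^n}^2$ and $R:=\int_Q\abs{z\psi+\bzeta\cdot\bPhi}\di V$; by the Cauchy--Schwarz inequality (in $L^2(Q)$ and then for $2$-vectors) one gets $R\le M A^{1/2}$.

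First I would bound $A$. Applying \eqref{eq:EnergyId} between the flat interfaces $\overline\Omega\times\{0\}$ and $\overline\Omega\times\{t\}$: the initial energy vanishes by the homogeneous initial conditions, the lateral term is nonnegative as noted, and the volume term equals $\int_{Q_{0,t}}(z\psi+\bzeta\cdot\bPhi)\di V$ since $(z,\bzeta)$ solves \eqref{eq:zzIBVP}; hence $\calE(t;z,\bzeta)\le R$ for all $t\in[0,T]$. Since $2\,\calE(t;z,\bzeta)=\N{c^{-1}z(\cdot,t)}_{L^2(\Omega)}^2+\N{\bzeta(\cdot,t)}_{L^2(\Omega)^n}^2$ on flat interfaces, integrating in $t$ gives $A=2\int_0^T\calE(t;z,\bzeta)\di t\le 2TR\le 2TM A^{1/2}$, whence $A^{1/2}\le 2TM$ and, back-substituting, $R\le M A^{1/2}\le 2TM^2$. (This self-improving estimate replaces a Gr\"onwall step, which is unnecessary because \eqref{eq:zzIBVP} has no zeroth-order terms.)

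Next I would bound the energy on the covering interfaces and reassemble. For any space-like interface $\Sigma$ with $f_\Sigma\ge0$, \eqref{eq:EnergyId} between $\overline\Omega\times\{0\}$ and $\Sigma$ gives, exactly as above, $\calE(\Sigma;z,\bzeta)\le R\le 2TM^2$; in particular this holds for $\Sigma_1,\dots,\Sigma_{N-1}$ covering $\Fspa$ and for $\Sigma_N=\overline\Omega\times\{T\}$, and the same identity over all of $(0,T)$ likewise yields $\int_{\FR}\frac\tht c z^2\di S\le R\le 2TM^2$. On each $\Sigma_j$ the lower bound of \eqref{eq:EnergyLowerBound} together with $1+\gamma^2\le2$ gives $2\int_{\Sigma_j}\frac{(1+\gamma^2)n^t_F}{1-\gamma}\big(c^{-2}z^2+\abs{\bzeta}^2\big)\di S\le\N{\frac{4(1+\gamma^2)}{(1-\gamma)^2}}_{L^\infty(\Sigma_j)}\calE(\Sigma_j;z,\bzeta)$. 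Summing over $j=1,\dots,N$ (whose union contains $\Fspa\cup\FT$), using $\calE(\Sigma_j;z,\bzeta)\le 2TM^2$, the value $4$ of the weight on the flat $\Sigma_N$, and $\N{\frac{4(1+\gamma^2)}{(1-\gamma)^2}}_{L^\infty(\Fspa)}\ge4$ to absorb that final-time term into the factor $N$, one bounds the $\Fspa\cup\FT$ part of \eqref{eq:NoTimeLikeStability} by $2TM^2\,N\N{\frac{4(1+\gamma^2)}{(1-\gamma)^2}}_{L^\infty(\Fspa)}$; for the Robin part, $\bzeta\cdot\bn_\Omega^x=\frac\tht c z$ on $\FR$ combines the two Robin terms into $\int_{\FR}\frac\tht c\frac{1}{\delta(1-\delta)}z^2\di S\le\N{\frac1{\delta(1-\delta)}}_{L^\infty(\FR)}\,2TM^2$. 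Adding the two contributions gives precisely $\tCstab^2 M^2$ with $\tCstab$ as in \eqref{eq:CstabNoTimeLike}.

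I expect the only real difficulty to be the constant bookkeeping — arranging the weights so that the final constant matches \eqref{eq:CstabNoTimeLike} exactly, in particular the absorption of the flat top interface into the factor $N$ — together with the minor point of making sense of the traces of $(z,\bzeta)$ on the interfaces, which can be handled by a density argument in $(\psi,\bPhi)$. The structural reason the proof is confined to $\Ftime=\emptyset$ and $\GD=\GN=\emptyset$ appears in the second step: with time-like faces present, or with nontrivial Dirichlet/Neumann portions of the boundary, the energy identity no longer collapses the lateral boundary term to a single sign, and the traces of $(z,\bzeta)$ on time-like faces are not controlled by $\N{(\psi,\bPhi)}_{L^2(Q)^{1+n}}$, which is exactly why the analysis of \S\ref{s:StabilityX} passes to a stronger norm $\N\cdot_\bX$.
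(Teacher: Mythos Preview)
Your proposal is correct and matches the paper's proof: both derive the $L^2(Q)$ bound on $(z,\bzeta)$ via the energy identity and integration in time (your self-improving step $A^{1/2}\le 2TM$ is exactly \eqref{eq:zzL2QBound}), then bound each $\calE(\Sigma_j;z,\bzeta)$ by $2TM^2$ and sum over the covering interfaces using the lower energy bound \eqref{eq:EnergyLowerBound}, and treat the Robin contribution by substituting $\bzeta\cdot\bn^x_\Omega=\frac\tht c z$ and invoking the energy identity once more. One minor clarification: the proposition itself does not require $\Ftime=\emptyset$ or $\GD=\GN=\emptyset$ (the lateral term in \eqref{eq:EnergyId} is nonpositive on all of $\partial\Omega$ for the solution of \eqref{eq:zzIBVP}, as you in fact observe); those hypotheses enter only in Corollary~\ref{cor:NoTimeLike}, where \eqref{eq:NoTimeLikeStability} is used to control the full seminorm $|\cdot|_{\Fh}$.
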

\am{Due to the presence of the coefficient $N$ in \eqref{eq:CstabNoTimeLike}, the value of $\tCstab$ increases when the mesh is refined in time; if the refinement is uniform we have $\tCstab\approx h_t^{-1/2}$, $h_t$ being the time-step.}
\begin{proof}
Applying the energy identity \eqref{eq:EnergyId} to the solution $(z,\bzeta)$ of the IBVP \eqref{eq:zzIBVP}, we have that for any two space-like interfaces $\Sigma_1,\Sigma_2$ with $f_{\Sigma_1}\le f_{\Sigma_2}$,
\begin{align}\label{eq:zzEnergyEvolution}
\calE(\Sigma_2; z,\bzeta)\le
\calE(\Sigma_1; z,\bzeta)
+\int_{Q_{\Sigma_1,\Sigma_2}}\big(\bPhi \cdot \bzeta+\psi z\big)\di V
\end{align}
(equality holds if $\Gamma_{\Sigma_1,\Sigma_2}\cap\FR$ has vanishing $n$-dimensional measure).
This implies a bound in space--time $L^2$ norm:
\begin{align*}
\Norm{c^{-1}z}^2_{L^2(Q)}\!+\Norm{\bzeta}^2_{L^2(Q)^n}
&=2\int_0^T \calE(t;z,\bzeta)\di t
\\&
\overset{\eqref{eq:zzEnergyEvolution}}\le 2\int_0^T 
\Big(\underbrace{\calE(0;z,\bzeta)}_{=0} 
+\int_\Omega\int_0^t(\bPhi\cdot\bzeta+\psi z)\di s\di \bx\Big)\di t\\
&\le 2T \Big(\Norm{c^{-1}z}^2_{L^2(Q)}+\Norm{\bzeta}^2_{L^2(Q)^n}\Big)^{1/2}
\Big(\Norm{c\psi}^2_{L^2(Q)}+\Norm{\bPhi}^2_{L^2(Q)^n}\Big)^{1/2},
\end{align*}
from which
\begin{align}\label{eq:zzL2QBound}
\Norm{c^{-1}z}^2_{L^2(Q)}+\Norm{\bzeta}^2_{L^2(Q)^n}
\le 4T^2 \big(\Norm{c\psi}^2_{L^2(Q)}+\Norm{\bPhi}^2_{L^2(Q)^n}\big).
\end{align}
For every space-like mesh interface $\Sigma\subset\Fh$ we control the corresponding term in \eqref{eq:DualStability} with the energy term:
\begin{align}
\label{eq:zzCgamma}
&2\Norm{\Big(\frac{(1+\gamma^2)n^t_\Sigma}{1-\gamma}\Big)^{1/2}c^{-1}z}^2_{L^2(\Sigma)}
+2\Norm{\Big(\frac{(1+\gamma^2)n^t_\Sigma}{1-\gamma}\Big)^{1/2}\bzeta}^2_{L^2(\Sigma)^n}
\\&\qquad
\le \underbrace{\Norm{\frac{4(1+\gamma^2)}{(1-\gamma)^2}}_{L^\infty(\Fspa)}}_{=:C_\gamma}
\frac12\int_\Sigma (1-\gamma) n^t_\Sigma\Big(\frac{z^2}{c^2}+|\bzeta|^2\Big)\di S
% \nonumber\\&
\overset{\eqref{eq:EnergyLowerBound}}\le C_\gamma\;\calE(\Sigma;z,\bzeta).
\nonumber
\end{align}%{eq:UnstrMeshAssumption}
We partition the faces in $\Fspa$ into $(N-1)$ interfaces $\Sigma_j$ with $\Fspa\subset\bigcup_{1\le j\le N-1}\Sigma_j$  such that 
$0\le f_{\Sigma_1}\le\cdots\le f_{\Sigma_{N-1}}\le %f_{\Sigma_N}=
T$ and denote $\Sigma_N=\Omega\times\{T\}$.
We now control all terms on the space-like faces:
\begin{align*}
&2\Norm{\Big(\frac{(1+\gamma^2)n^t_F}{1-\gamma}\Big)^{1/2}c^{-1}z}^2_{L^2(\Fspa\cup\FT)}
+2\Norm{\Big(\frac{(1+\gamma^2)n^t_F}{1-\gamma}\Big)^{1/2}\bzeta}^2_{L^2(\Fspa\cup\FT)^n}
\\&
\overset{\eqref{eq:zzCgamma}}\le C_\gamma \sum_{j=1}^N \calE(\Sigma_j;z,\bzeta)
\\&
\overset{\eqref{eq:zzEnergyEvolution}}\le C_\gamma \sum_{j=1}^N \int_{Q_{\Omega\times\{0\},\Sigma_j}}\big(\bPhi \cdot \bzeta+\psi z\big)\di V
\\&
\overset{\eqref{eq:zzL2QBound}}\le
2C_\gamma N T\big(\Norm{c\psi}^2_{L^2(Q)}+\Norm{\bPhi}^2_{L^2(Q)^n}\big).
\end{align*}
%From assumptions (i) and (ii), in \eqref{eq:DualStability} there are no terms on $\Ftime,\FD$ and $\FN$, so 
We are now left with the terms on $\FR$:
using the Robin boundary condition $\frac\tht c z=\bzeta\cdot\bn_\Omega^x$,
the energy identity \eqref{eq:EnergyId} and the $L^2(Q)$ stability bound \eqref{eq:zzL2QBound}, we have
\begin{align*}
&\hspace{-10mm}\Norm{\Big(\frac c{(1-\delta)\tht}\Big)^{1/2}\bzeta\cdot\bn^x_\Omega}^2_{L^2(\FR)}
+\Norm{\Big(\frac \tht{\delta c}\Big)^{1/2}z}^2_{L^2(\FR)}
\\&
%=\int_\FR \Big(\frac c{(1-\delta)\tht}(\bzeta\cdot\bn^x_\Omega)^2+ \frac\tht{\delta c}z^2\Big)\di S
%\\&
\le\underbrace{
\Norm{\frac1{\delta(1-\delta)}}_{L^\infty(\FR)}}_{=:C_\delta}
%\Norm{\big(\delta(1-\delta)\big)^{-1}}_{L^\infty(\FR)}}_{=:C_\delta}
\int_\FR\Big(
\delta\frac c\tht(\bzeta\cdot\bn^x_\Omega)^2+ (1-\delta)\frac\tht cz^2\Big)\di S
\\&
\overset{\tht z=c\bzeta\cdot\bn_\Omega^x}=
C_\delta
\int_\FR z\:\bzeta\cdot\bn^x_\Omega\di S
\\&
\overset{\eqref{eq:EnergyId}}= C_\delta\bigg(
\underbrace{\calE(0;z,\bzeta)}_{=0}-\underbrace{\calE(T;z,\bzeta)}_{\ge0}
+\int_Q\big(\bPhi\cdot\bzeta+\psi\, z\big)\di V
\bigg)
\\&
\overset{\eqref{eq:zzL2QBound}}\le C_\delta 2T \big(\Norm{c\psi}^2_{L^2(Q)}+\Norm{\bPhi}^2_{L^2(Q)^n}\big).
\end{align*}
Combining this inequality with the previous one we obtain the assertion with $\tCstab^2=2T(C_\gamma N+C_\delta)$.
\end{proof}
Note that in Proposition~\ref{prop:DualStability} we do not require $\Sigma_j\subset\Fspa$.

We are now ready to prove error bounds in $L^2(Q)$ norm %\eqref{eq:L2Error} 
through bound \eqref{eq:DualStability}, under some further assumption.
In particular, under assumption (ii) below, the wave speed $c$ must be constant throughout $Q$.
A mesh satisfying this assumption is depicted in Figure \ref{fig:Mesh}.

\begin{cor}\label{cor:NoTimeLike}
Assume that
\begin{itemize}
\item[(i)]%\label{it:Robin} 
$\GD=\GN=\emptyset$, i.e.\ only Robin boundary conditions are allowed ($\deO=\GR$); and
\item[(ii)]%\label{it:NoTime} 
$\Ftime=\emptyset$, i.e.\ no time-like mesh interfaces are allowed.
\end{itemize}
Then the $L^2(Q)^{1+n}$ norm of the Trefftz-DG error is controlled:
\begin{align}\label{eq:L2Error}
\Big(\Norm{c^{-1}(v-\Vhp)}_{L^2(Q)}^2&+\Norm{\bsigma-\Shp}_{L^2(Q)^n}^2\Big)^{1/2}\\
&\le  \Cstab(1+C_c) \inf_{(w,\btau)\in\bVp\Th} \Tnorm{(v-w,\bsigma-\btau)}\DGp,
\nonumber
\end{align}
where $\Cstab$ coincides with $\tCstab$ defined in \eqref{eq:CstabNoTimeLike}, and $C_c$ is as in \eqref{eq:Continuity}.
\end{cor}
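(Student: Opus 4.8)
The plan is to obtain \eqref{eq:L2Error} by feeding Proposition~\ref{prop:DualStability} into the duality machinery of \S\ref{s:Duality} and combining with the quasi-optimality of Theorem~\ref{th:QO}. The first step is to fix the abstract space: take $\bX=L^2(Q)^{1+n}$ equipped with the weighted norm $\N{(\psi,\bPhi)}_\bX:=\big(\N{c\psi}^2_{L^2(Q)}+\N{\bPhi}^2_{L^2(Q)^n}\big)^{1/2}$. Since $c\ge c_0>0$, this norm is equivalent to the plain $L^2(Q)^{1+n}$ norm (with $C_X=\min\{c_0,1\}$), so $\bX$ is admissible in the sense of \S\ref{s:MeshIndependent}. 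Under assumption (i) we have $\FD=\FN=\emptyset$, and under assumption (ii) $\Ftime=\emptyset$, so every term of the skeleton seminorm \eqref{eq:FhNorm} supported on $\Ftime\cup\FD$ or $\Ftime\cup\FN$ (and the $\FR$ terms involving $\bn^x_F$) reduces to its $\FR$/$\Fspa\cup\FT$ part only; concretely $\abs{(z,\bzeta)}_{\Fh}$ collapses precisely to the bracketed left-hand side of \eqref{eq:NoTimeLikeStability}. Hence Proposition~\ref{prop:DualStability} is exactly the assertion that Assumption~\ref{ass:DualStability} holds for this $\bX$ with $\Cstab=\tCstab$ as in \eqref{eq:CstabNoTimeLike}.

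Next I would identify the dual norm $\N{\cdot}_{\bX^*}$ of this particular $\bX$: substituting $\psi\mapsto c^{-1}\psi$ in \eqref{eq:Xstar} and using the standard duality characterisation of the $L^2$ norm (Cauchy--Schwarz, with equality attained at $\psi=c^{-1}w$, $\bPhi=\btau$) gives $\N{(w,\btau)}_{\bX^*}=\big(\N{c^{-1}w}^2_{L^2(Q)}+\N{\btau}^2_{L^2(Q)^n}\big)^{1/2}$, i.e.\ exactly the quantity on the left-hand side of \eqref{eq:L2Error}. With Assumption~\ref{ass:DualStability} now in force, Proposition~\ref{prop:Duality} applies to any Trefftz field; in particular the Galerkin error $(v-\Vhp,\bsigma-\Shp)$ belongs to $\bT\Th$ (the exact solution is assumed to lie in $\bT\Th$ and $(\Vhp,\Shp)\in\bVp\Th\subset\bT\Th$), so
\begin{align*}
\Big(\N{c^{-1}(v-\Vhp)}^2_{L^2(Q)}+\N{\bsigma-\Shp}^2_{L^2(Q)^n}\Big)^{1/2}
&=\N{(v-\Vhp,\bsigma-\Shp)}_{\bX^*}\\
&\le\tCstab\,\Tnorm{(v-\Vhp,\bsigma-\Shp)}\DG.
\end{align*}
Bounding the right-hand side by the quasi-optimality estimate \eqref{eq:QuasiOpt} of Theorem~\ref{th:QO} then yields \eqref{eq:L2Error} with $\Cstab=\tCstab$.

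I do not expect a genuine obstacle here: the corollary is an assembly of results already established, so the work is confined to bookkeeping --- verifying which skeleton terms in \eqref{eq:FhNorm} survive once $\FD=\FN=\Ftime=\emptyset$ (so that Proposition~\ref{prop:DualStability} becomes verbatim Assumption~\ref{ass:DualStability} for the chosen $\bX$), and the elementary rescaling identifying the dual of the $c$-weighted $L^2$ norm with the $c^{-1}$-weighted one. The only points deserving a line of comment are the admissibility $\N{\cdot}_\bX\ge C_X\N{\cdot}_{L^2(Q)^{1+n}}$, which follows from $c\ge c_0$, and the observation (implicit in assumption (ii)) that with no time-like faces the wave speed $c$ is necessarily constant on all of $Q$, consistent with the hypotheses of Proposition~\ref{prop:DualStability}.
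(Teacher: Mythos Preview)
Your proposal is correct and follows essentially the same route as the paper: set $\bX=L^2(Q)^{1+n}$ with the $c$-weighted norm, observe that assumptions (i)--(ii) collapse $\abs{\cdot}_{\Fh}$ to the left-hand side of \eqref{eq:NoTimeLikeStability} so that Proposition~\ref{prop:DualStability} verifies Assumption~\ref{ass:DualStability} with $\Cstab=\tCstab$, then chain Proposition~\ref{prop:Duality} and Theorem~\ref{th:QO}. You spell out a couple of points the paper leaves implicit (the identification of $\N{\cdot}_{\bX^*}$ with the $c^{-1}$-weighted $L^2$ norm, and the admissibility $\N{\cdot}_\bX\ge C_X\N{\cdot}_{L^2}$ via $c\ge c_0$), but these are exactly the missing details, not a different argument.
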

\begin{proof}
Under assumptions (i)--(ii), the $\abs{\cdot}_{\Fh}$ seminorm in \eqref{eq:FhNorm} reduces to the left-hand side of \eqref{eq:NoTimeLikeStability}, thus Proposition~\ref{prop:DualStability} gives the stability bound \eqref{eq:DualStability} with 
$\Norm{(\psi,\bPhi)}_{\bX}^2=\N{c\psi}^2_{L^2(Q)}+\N{\bPhi}^2_{L^2(Q)^n}$
and $\Cstab=\tCstab$.
From the duality argument of Proposition~\ref{prop:Duality} and the quasi-optimality of Theorem~\ref{th:QO}, we have
\begin{align*}
\Big(\Norm{c^{-1}(v-\Vhp)}_{L^2(Q)}^2&+\Norm{\bsigma-\Shp}_{L^2(Q)^n}^2\Big)^{1/2}\\
&\overset{\eqref{eq:Xstar}}=\N{(v-\Vhp,\bsigma-\Shp)}_{\bX^*}
\\
&\overset{\eqref{eq:zzDualityBound}}\le \Cstab\Tnorm{(v-\Vhp,\bsigma-\Shp)}\DG
\\
&\overset{\eqref{eq:QuasiOpt}}\le \Cstab (1+C_c) \inf_{(w,\btau)\in\bVp\Th} \Tnorm{(v-w,\bsigma-\btau)}\DGp.
\end{align*}
\end{proof}

Assumption (ii) in Corollary \ref{cor:NoTimeLike} requires that all the internal mesh faces are space-like; Figure \ref{fig:Mesh} shows a mesh of this kind.
The meshes that satisfy this condition allow the Trefftz-DG method to be treated as a ``semi-explicit'' scheme as in \cite{FalkRichter1999,MoRi05,GSW16}:
if the elements are suitably designed and ordered, the discrete solution can be computed sequentially solving a local problem for each element.
This also allows a high degree of parallelism.
If the ``\emph{tent-pitching}'' algorithm of \cite{EGSU05,UnSh02,GSW16}
 is used to construct the mesh and the ``macro elements'' of \cite{MoRi05} are taken as elements, then the mesh obtained satisfies the assumptions of Proposition \ref{prop:DualStability}. % and permits a semi-explicit solution of the Trefftz-DG formulation.
The fact that the elements obtained in this way do not have simple shapes such as $(n+1)$-simplices is not a computational difficulty: all integrals in the Trefftz-DG formulation \eqref{eq:TDG} are defined on mesh faces, which are $n$-simplices, thus no quadrature on complicated shapes needs to be performed.
%Since all integrals in the formulation \eqref{eq:TDG} are defined on mesh faces, the fact that the elements obtained in this way do not have simple shapes such as $(n+1)$-simplices is not a computational difficulty, because quadrature is performed only on their faces which are $n$-simplices.
This is due to the Trefftz property, so this advantage is not available to discretisations employing standard (non-Trefftz) local spaces.
%\amnote{Decide what to say regarding tents?
%Add 2+1D pictures? I'd better not.
%Extended a bit the picture, probably more confusing than before... draw 2 separate meshes, the old one and one regular% with trapezoids?}

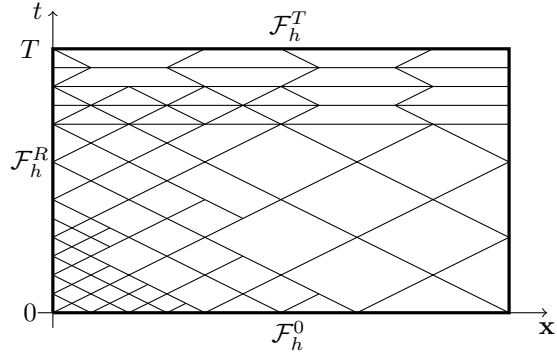
\begin{figure}[htb]
\centering
% \begin{tikzpicture}[scale=1]
% \draw [->] (0,-0.2)--(0,3);\draw(-0.2,3)node{$t$};
% \draw [->] (-0.2,0) -- (6.5,0);\draw(6.5,-0.2)node{$\bx$}; 
% \draw [very thick] (0,0)--(6,0)--(6,2.5)--(0,2.5)--(0,0);
% \draw (0,0)--(5,2.5)--(6,2)--(2,0)--(0,1)--(3,2.5)--(6,1)--(4,0)--(0,2)--(1,2.5)--(6,0);
% \draw (3.5,0.25)--(3,0)--(0,1.5)--(2,2.5);%--(2.5,2.25);
% \draw (2.5,0.75)--(1,0)--(0,0.5)--(2,1.5); %....
% \draw (0,2.5)--(2.5,1.25);
% \draw (1.75,0.125)--(1.5,0)--(0,0.75)--(0.75,1.125);
% \draw (1.25,0.375)--(0.5,0)--(0,0.25)--(0.75,0.625);
% \draw (0.75,0.875)--(0,1.25);
% \draw(3.1,-0.3)node{$\FO$}; 
% \draw(3.1,2.8)node{$\FT$}; 
% \draw(-0.3,1.5)node{$\FR$}; 
% \draw(-0.3,0)node{$0$}; 
% \draw(-0.3,2.5)node{$T$}; 
% %\draw(3.2,2.7)node{$\Fspa$};
% %\draw [->,gray] (3,2.5) -- (2.2,0.75);
% %\draw [->,gray] (3,2.5) -- (0.75,2.25);
% %\draw [->,gray] (3,2.5) -- (2,1.5);
% %\draw(3.2,0.4)node{$\Ftime$};
% %\draw [->,gray] (2.9,0.4) -- (2.3,0.35);
% %\draw [->,gray] (2.9,0.4) -- (2.1,0.65);
% %\draw [->,gray] (2.9,0.4) -- (1.7,1.25);
% %\draw(.75,0.75)node{$K$};
% %\draw [->,thick] (0.5,0.6) -- (0.2,.6);
% %\draw(.3,0.82)node{$\scriptscriptstyle{\bn_K^x}$};
% %\draw [->,thick] (0,0.6) -- (-.3,.6);
% %\draw(-0.2,0.82)node{$\bn_\Omega^x$};
% \end{tikzpicture}%\vspace{-7mm}
\begin{tikzpicture}[scale=1]
\draw [->] (0,-0.2)--(0,4);\draw(-0.2,4)node{$t$};
\draw [->] (-0.2,0) -- (6.5,0);\draw(6.5,-0.2)node{$\bx$}; 
\draw [very thick] (0,0)--(6,0)--(6,3.5)--(0,3.5)--(0,0);
\draw (0,0)--(5,2.5)--(6,2)--(2,0)--(0,1)--(3,2.5)--(6,1)--(4,0)--(0,2)--(1,2.5)--(6,0);
\draw (3.5,0.25)--(3,0)--(0,1.5)--(1.5,2.25)--(2,2.5);;
\draw (2.5,0.75)--(1,0)--(0,0.5)--(2,1.5); %....
\draw (0,2.5)--(2.5,1.25);
\draw (1.75,0.125)--(1.5,0)--(0,0.75)--(0.75,1.125);
\draw (1.25,0.375)--(0.5,0)--(0,0.25)--(0.75,0.625);
\draw (0.75,0.875)--(0,1.25);
\draw(3.1,-0.3)node{$\FO$}; 
\draw(3.1,3.8)node{$\FT$}; 
\draw(-0.3,2)node{$\FR$}; 
\draw(-0.3,0)node{$0$}; 
\draw(-0.3,3.5)node{$T$}; 
\draw(0,2.5)--(6,2.5);
\draw(0,2.75)--(6,2.75);
\draw(0,3)--(6,3);
\draw(0,3.25)--(6,3.25);
%\draw(1,2.5)--(0.5,2.75)--(1,3)--(0.5,3.25)--(1,3.5);
\draw(2,3)--(1.5,3.25)--(2,3.5);
\draw(0,3)--(0.5,3.25)--(0,3.5);
\draw(3,2.5)--(3.5,2.75)--(3,3)--(3.5,3.25)--(3,3.5);
\draw(5,2.5)--(4.5,2.75)--(5,3)--(4.5,3.25)--(5,3.5);
\draw(0,2.5)--(0.5,2.75)--(1,2.5)--(1.5,2.75)--(2,2.5)--(2.5,2.75)--(3,2.5);
\draw(0,3)--(0.5,2.75)--(1,3)--(1.5,2.75)--(2,3)--(2.5,2.75)--(3,3);
% \draw(1,2.5)--(1.5,2.75)--(2.5,2.75)--(3,2.5)--(5,2.5)--(5.5,2.75)--(6,2.75);
\end{tikzpicture}
\caption{An example of a mesh in one space dimension ($n=1$) satisfying the assumptions of Proposition \ref{prop:DualStability}.
All internal mesh faces are space-like.
Not all mesh elements are 2-simplices (triangles), but the faces are 1-simplices (segments), so all integrals in \eqref{eq:TDG} are easy to compute.
For this mesh, the parameter $N$ in the proof of Proposition \ref{prop:DualStability} is equal to 
%16. OLD PICTURE
24. For images of tent-pitched meshes satisfying the same assumptions in higher dimensions, see e.g.\ \cite{GSW16,UnSh02,EGSU05}.
\label{fig:Mesh}
}
\end{figure}

%Assumption (i) in Proposition~\ref{prop:DualStability} rule out Dirichlet and Neumann boundary conditions.
%Indeed a Robin boundary dissipates energy, thanks to the choice of the impedance parameter $\tht$, while Dirichlet and Neumann boundaries reflect waves and preserve energy, thus it is difficult to....

%------------------------------------------------------------------------------------------------

\subsubsection{Stability of the auxiliary problem: case with time-like faces}\label{s:StabilityX}

Corollary~\ref{cor:NoTimeLike} allows to control the $L^2(Q)^{1+n}$
norm of the Trefftz-DG error only if the mesh does not contain
time-like faces and only Robin boundary conditions are used.
This is because on time-like faces the $L^2$ norm of the trace of $z$ and of the normal
trace of $\bzeta$, solution to the auxiliary  problem~\eqref{eq:zzIBVP},  seem not to be bounded by the $L^2(Q)^{1+n}$ norm of the sources $(\psi,\bPhi)$; compare the left-hand side of \eqref{eq:NoTimeLikeStability} and the seminorm \eqref{eq:FhNorm} we would like to bound.
The simple integration-by-parts trick used in one space dimension (see the final part of the proof of \cite[Lemma~4.9]{SpaceTimeTDG}) can not immediately be applied in higher dimensions; this is related to the fact that $\Hdiv=H^1\OO$ in 1D, so this space admits $L^2$ traces.
To prove analogues bounds in the presence of time-like faces, we need to exploit the regularity of the solutions $(z,\bzeta)$ of the inhomogeneous wave equations.
This requires to measure $\psi$ and $\bPhi$ in a norm stronger than $L^2(Q)$, which is the $\bX$ norm, leading to bounds on the error in the weaker norm $\bX^*$.

In this section, we restrict ourselves to the following situation:
\begin{itemize}
\item the space dimension is $n=3$,
\item only Dirichlet boundary conditions are present, i.e.\ $\partial\Omega=\GD$ and $\GN=\GR=\emptyset$,
\item the mesh elements are Cartesian products of polyhedra in space and intervals in time.
% \item \am{space--time Cartesian-product mesh elements (i.e.\ each $K\in\calT_h$ is in the form $K=K_0\times(t_K^-,t_K^+)$, $K_0\subset\IR^3$)}.
\end{itemize}
% \begin{align*}
% &\text{space dimension $n=3$},\\
% &\text{Dirichlet boundary conditions only, i.e.\ $\partial\Omega=\GD$ ($\GN=\GR=\emptyset$)},\\
% &\text{Cartesian meshes},
% \end{align*}
%Since the regularity of $(z,\bzeta)$ depends on the boundary
%conditions used, we restrict ourselves here to the case of Dirichlet boundary conditions only;
%%%the Neumann case follows similarly but the ``mixed'' case (i.e.\ both $\GD$ and $\GN$ non-empty) is harder. (??????)
%we also restrict ourselves to the case $n=3$ and Cartesian-product
%meshes, 
although we expect that the argument can be extended to much more general IBVPs and discretisations.
We use standard Bochner space notation for spaces and norms as in \cite[Sect.~5.9.2]{EVA02}. %\cite[\ip{Ch. 1, Sect.~1.3 and Sect.~3.1}]{LiMaI}.
This is the only part of the paper where we do not make the bounding constants explicit.

\begin{prop}\label{prop:TimeFaceBounds}
Assume $\deO=\GD$ (so $\FN=\FR=\emptyset$) and $n=3$.
Define $\bX$ to be the closure of $C^\infty_0(Q)^{1+n}$ with respect to the norm
\begin{align}
\nonumber
\N{(\psi,\bPhi)}_\bX^2:=
\N{\psi}_\LtQ^2%+\N{\psi'}_\LtQ^2
&+\N{\bPhi}_{\LtQ^3}^2
+\N{\der{\psi}{t}-\nabla\cdot\bPhi}_{\LtQ}^2
%+\N{\nabla\cdot\bPhi}_{\LtQ}^2
% +\N{\nabla\times\bPhi}_{\LtQ^3}^2
\\&
+\N{(\bx,t)\mapsto\int_0^t\nabla\times\bPhi(\bx,s)\di s}_{L^2(Q)^3}^2.
% \\
% \bX:
% &=H^1_0(0,T;L^2\OO)\times\big( L^2(0,T;\Hodiv)\cap L^2(0,T;\Hocurl)\big),\\
% &=\Big\{(\psi,\bPhi)\in \LtQ^{1+n}:
% \N{(\psi,\bPhi)}_\bX<\infty,\;
% % \psi(\cdot,0)=\psi(\cdot,T)=0 \iin\Omega,\;
% \psi=0 \iin\Omega\times\{0,T\},\;
% \bPhi=\bzero \oon\deO\times I, \Big\}.
\label{eq:XNorm3D}
\end{align}
Assume that all mesh elements $K\in\calT_h$ are space--time Cartesian products in the form
$K=K_0\times(t_K^-,t_K^+)$.  %$K=K_0\times[t_K^-,t_K^+]$.
Assume that there exists $\rho>0$ such that, for all elements $K\in\calT_h$, the space projection $K_0$ is star-shaped\footnote{We recall that a set $A\subset\IR^N$ is called star-shaped with respect to a subset  $B\subset A$ if for all $\ba\in A$ and $\bb\in B$ the line segment with endpoints $\ba$ and $\bb$ is contained in $A$. In particular, a convex set is star-shaped with respect to any of its subsets.}
with respect to a $n$-dimensional ball with radius $\rho\diam(K_0)$.
Define a meshsize and two wavespeed functions on $\Ftime\cup\FD$:
\begin{align*}
\hh,\ttc_-,\ttc_+\in L^\infty(\Ftime\cup\FD),\qquad
\hh(\bx,t):=&\min\{\diam(K_0): K\in\calT_h, (\bx,t)\in \am{\overline{K}}\},\\
\ttc_-(\bx,t):=&\min\{c_{|_K}: K\in\calT_h, (\bx,t)\in \am{\overline{K}}\},\\
\ttc_+(\bx,t):=&\max\{c_{|_K}: K\in\calT_h, (\bx,t)\in \am{\overline{K}}\},
\\
h_\calT:=&\N{\hh}_{L^\infty(\Ftime\cup\FD)}.
\end{align*}
Define two arbitrary positive functions 
$$\tta\in L^\infty(\Ftime\cup\FD),\quad\ttb\in L^\infty(\Ftime)
\quad
0<\tta_*\le\tta\le\tta^*,\quad
0<\ttb_*\le\ttb\le\ttb^*,
$$
for some constants $\tta_*,\tta^*,\ttb_*,\ttb^*$,
and fix the coefficients of the Trefftz-DG numerical fluxes as
$$
% h_\calT:=\N{\hh}_{L^\infty(\Ftime\cup\FD)},\qquad
\alpha=\frac{h_\calT\tta}{\ttc_-\hh}, 
\qquad
\beta= \frac{\ttc_+ h_\calT\ttb}{\hh}.
$$

Then, for all $(\psi,\bPhi)\in\bX$, the solution $(z,\bzeta)$ of~\eqref{eq:zzIBVP} satisfies
$$
\Norm{\alpha^{-1/2}\bzeta\cdot\bn^x_F}^2_{L^2(\Ftime\cup\FD)}
+\Norm{\beta^{-1/2}z}^2_{L^2(\Ftime)}
\le C_{S,\rho} (\tta_*^{-1}+\ttb_*^{-1}) h_\calT^{-1} \N{(\psi,\bPhi)}_\bX^2,
$$
where $C_{S,\rho}>0$ only depends on $\Omega,T,c$ and $\rho$.
\end{prop}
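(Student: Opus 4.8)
The plan is to reduce the auxiliary problem~\eqref{eq:zzIBVP} to a scalar wave equation for $z$, to exploit its space--time regularity and the ``hidden'' boundary trace regularity for the wave equation, and then to control the traces of $\bzeta$ by means of a scalar/vector potential splitting combined with elliptic regularity on the polyhedral cells. I would first observe that, by density of $C^\infty_0(Q)^{1+n}$ in $\bX$ and since the traces occurring on the left-hand side depend continuously on $(\psi,\bPhi)\in\bX$ as elements of $L^2(I;H^{-1/2})$ of the faces (using that $\nabla\cdot\bzeta=\psi-c^{-2}\partial_t z\in L^2(Q)$ is controlled by $\N{(\psi,\bPhi)}_\bX$ via the energy estimate below), it is enough to prove the bound for smooth compactly supported data; then all the fields below are smooth and $\bG$, defined next, vanishes near $\partial\Omega$. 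Eliminating $\bzeta$ between the two equations of~\eqref{eq:zzIBVP} (using that $c$ does not depend on $t$) shows that $z$ solves $c^{-2}\partial_{tt}z-\Delta z=f$ with $f:=\partial_t\psi-\nabla\cdot\bPhi\in L^2(Q)$, homogeneous Dirichlet condition on $\partial\Omega\times I$, zero initial conditions (since $\psi(\cdot,0)=0$), and the transmission conditions at the jumps of $c$. The standard energy estimate gives $\N{\nabla z}_{L^\infty(I;L^2(\Omega))}+\N{\partial_t z}_{L^\infty(I;L^2(\Omega))}\le C\N{f}_{L^2(Q)}$, and the Lasiecka--Lions--Triggiani hidden trace regularity gives $\N{\partial_{\bn}z}_{L^2(\partial\Omega\times I)}\le C\N{f}_{L^2(Q)}$, with $C=C(\Omega,T,c)$. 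Integrating the first equation of~\eqref{eq:zzIBVP} in time we get $\bzeta(\cdot,t)=\bG(\cdot,t)-\nabla W(\cdot,t)$ with $\bG(\cdot,t):=\int_0^t\bPhi(\cdot,s)\di s$ and $W(\cdot,t):=\int_0^t z(\cdot,s)\di s\in H^1_0(\Omega)$; moreover $-\Delta W(\cdot,t)=c^{-2}\partial_t z(\cdot,t)-\int_0^t f(\cdot,s)\di s$, and $\nabla\cdot\bzeta=\psi-c^{-2}\partial_t z$, $\nabla\times\bzeta(\cdot,t)=\int_0^t\nabla\times\bPhi(\cdot,s)\di s$, whose $L^2(Q)$-norms, together with $\N{\bG}_{L^\infty(I;L^2(\Omega))}$, are all $\le C(\Omega,T,c)\N{(\psi,\bPhi)}_\bX$.

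For the term $\N{\beta^{-1/2}z}_{L^2(\Ftime)}$ I would use, on each space cell $K_0$ (star-shaped with respect to a ball of radius $\rho\diam(K_0)$, whose Lipschitz character is controlled by $\rho$), the scaled trace inequality $\N{w}^2_{L^2(\partial K_0)}\le C_\rho\big(\diam(K_0)^{-1}\N{w}^2_{L^2(K_0)}+\diam(K_0)\N{\nabla w}^2_{L^2(K_0)}\big)$ applied to $z(\cdot,t)$. Using that $\beta^{-1}\le C(c)\diam(K_0)/(h_\calT\ttb_*)$ and $\diam(K_0)\le h_\calT$ on the time-like part of $\partial K$, summing over the elements, integrating in time and invoking the energy estimate and the Poincar\'e inequality ($z(\cdot,t)\in H^1_0(\Omega)$), this yields $\N{\beta^{-1/2}z}^2_{L^2(\Ftime)}\le C(\Omega,T,c,\rho)\,\ttb_*^{-1}h_\calT^{-1}\N{(\psi,\bPhi)}_\bX^2$.

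For the term $\N{\alpha^{-1/2}\bzeta\cdot\bn^x_F}_{L^2(\Ftime\cup\FD)}$ I would split $\bzeta\cdot\bn^x_F=\bG\cdot\bn^x_F-\partial_{\bn_F}W$ face by face, and use that $\alpha^{-1}\le C(c)\diam(K_0)/(h_\calT\tta_*)$ and $\diam(K_0)\le h_\calT$ on the time-like faces, so it is enough to bound the $\diam(K_0)$-weighted $L^2$-norms of $\bG\cdot\bn^x_F$ and of $\partial_{\bn_F}W$ by $C\N{(\psi,\bPhi)}_\bX^2$. On $\FD$, for smooth data $\bG\cdot\bn^x_\Omega=0$ and $\partial_{\bn}W(\cdot,t)=\int_0^t\partial_{\bn}z(\cdot,s)\di s$, so this contribution is bounded using the hidden trace regularity by $C(\Omega,T,c)\N{(\psi,\bPhi)}_\bX$. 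On the interior time-like faces, $W$ solves the global Dirichlet--Poisson problem with $L^2(Q)$ right-hand side $-\Delta W$ (of norm $\le C(\Omega,T,c)\N{(\psi,\bPhi)}_\bX$), so the normal traces of $\nabla W(\cdot,t)$ on the faces lie in $L^2$ and sum, after integration in time, to $C(\Omega,T,c,\rho)\N{(\psi,\bPhi)}^2_\bX$ --- away from $\partial\Omega$ from interior elliptic regularity, and near $\partial\Omega$ from the regularity of the Dirichlet problem on the polytope $\Omega$ (which, combined with interior regularity, gives $L^2$ control of the normal trace of $\nabla W(\cdot,t)$ on each face). For $\bG$ I would perform, on each cell, a Helmholtz splitting $\bG|_{K_0}=\nabla q_K+\bb_K$ with $q_K\in H^1_0(K_0)$, $\Delta q_K=\nabla\cdot\bG$ in $K_0$, so that $\N{\partial_{\bn}q_K}_{L^2(\partial K_0)}\le C_\rho\diam(K_0)^{1/2}\N{\nabla\cdot\bG}_{L^2(K_0)}$ by the scaled $L^2$-regularity of the Dirichlet Laplacian, and control the normal trace of the divergence-free remainder $\bb_K$ (whose curl is $\nabla\times\bG$ restricted to $K_0$) through the Costabel--McIntosh regularized Poincar\'e operator on the star-shaped cell, whose $\rho$-dependent estimates produce a potential $\bA_K$ with $\bb_K=\nabla\times\bA_K$ and tangential trace in $H^1(\partial K_0)$, whence $\bb_K\cdot\bn_{K_0}=\operatorname{curl}_{\partial K_0}(\bA_K)_{\mathrm{tan}}\in L^2(\partial K_0)$. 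Summing over cells and integrating in time gives $\N{\alpha^{-1/2}\bzeta\cdot\bn^x_F}^2_{L^2(\Ftime\cup\FD)}\le C(\Omega,T,c,\rho)\,\tta_*^{-1}h_\calT^{-1}\N{(\psi,\bPhi)}_\bX^2$. Adding the two contributions, choosing $C_{S,\rho}$ to be the larger of the two constants, and passing to the limit over $C^\infty_0(Q)^{1+n}$ completes the proof.

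The step I expect to be the main obstacle is this last one: proving, with a constant depending only on $\rho$ (and on $\Omega,T,c$), the $L^2$-regularity of the normal traces on the interior time-like faces of $\nabla W$ and of the divergence-free part of $\bG$ (equivalently, of $\bzeta$), and in particular handling cells that abut $\partial\Omega$, where interior elliptic regularity alone does not suffice and one must combine it with boundary (edge) regularity of the Dirichlet problem on the Lipschitz polytope $\Omega$. Since this relies on qualitative regularity results --- hidden trace regularity for the wave equation, Jerison--Kenig-type $L^2$-regularity of the Dirichlet Laplacian, and regularized Poincar\'e operators on star-shaped domains --- the constants are not tracked explicitly, in accordance with the statement.
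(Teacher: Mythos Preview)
Your approach differs substantially from the paper's, and the step you flag as the main obstacle does contain a genuine gap. When you write that the Costabel--McIntosh regularised Poincar\'e operator applied to the divergence-free remainder $\bb_K$ produces a potential $\bA_K$ with ``tangential trace in $H^1(\partial K_0)$,'' this overstates the regularity: the operator maps $L^2$ forms to $H^1$ forms, so $\bA_K\in H^1(K_0)^3$ and hence $(\bA_K)_{\mathrm{tan}}\in H^{1/2}(\partial K_0)$, not $H^1$. Consequently $\bb_K\cdot\bn_{K_0}$, obtained as the surface curl of $(\bA_K)_{\mathrm{tan}}$, is a priori only in $H^{-1/2}(\partial K_0)$, and you do not obtain the $L^2$ face trace you need with a $\rho$-dependent constant. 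The cell-wise splitting of $\bG$ does not supply the missing half-derivative, because the $\bX$ norm controls only $\nabla\cdot\bG$ and $\nabla\times\bG$ in $L^2$, not any fractional norm of $\bG$ itself, and on the interior cell $K_0$ there is no boundary condition to invoke an embedding of $H(\div;K_0)\cap H(\curl;K_0)$ into a positive-order Sobolev space. A secondary concern is that the Lasiecka--Lions--Triggiani hidden trace regularity is usually stated for constant or smooth coefficients; its validity for the transmission problem with piecewise-constant $c$ would need justification.

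The paper bypasses all of this by working \emph{globally} on $\Omega$ and exploiting a boundary condition you did not use: from $z=0$ on $\partial\Omega\times I$ and $\partial_t\bzeta=-\nabla z+\bPhi$ with $\bPhi$ compactly supported (by the density reduction), one gets $\partial_t(\bzeta\times\bn_\Omega^x)=-\nabla_T z\times\bn_\Omega^x=\bzero$, hence $\bzeta\times\bn_\Omega^x=\bzero$ on $\partial\Omega\times I$. Thus $\bzeta(\cdot,t)$ lies in $\bX_N(\Omega):=\{\bxi\in H(\div;\Omega)\cap H(\curl;\Omega):\bxi\times\bn_\Omega^x=\bzero\text{ on }\partial\Omega\}$, and \cite[Proposition~3.7]{ABD98} gives the continuous embedding $\bX_N(\Omega)\hookrightarrow H^{1/2+\epsilon_\Omega}(\Omega)^3$ for some $\epsilon_\Omega>0$ depending only on the Lipschitz polyhedron $\Omega$. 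With $\N{\bzeta}_{L^2(0,T;H^{1/2+\epsilon_\Omega}(\Omega)^3)}$ bounded by $\N{(\psi,\bPhi)}_\bX$, a scaled fractional trace inequality on each $K_0$ (constant depending only on $\rho$ and $\epsilon_\Omega$) controls $\N{\alpha^{-1/2}\bzeta\cdot\bn^x_F}_{L^2(\Ftime\cup\FD)}$ directly. No hidden trace regularity, no cell-wise Helmholtz decomposition, no Jerison--Kenig estimate and no regularised Poincar\'e operators are needed; the argument is both shorter and free of the gap above. Your treatment of $\N{\beta^{-1/2}z}_{L^2(\Ftime)}$ via the scaled $H^1$ trace inequality, on the other hand, is essentially what the paper does.
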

\begin{proof}
By the density of $C^\infty_0(Q)^{1+n}$ in $\bX$, it is enough to consider the case
$(\psi,\bPhi)\in C^\infty_0(Q)^{1+n}$, i.e.\ $\psi$ and $\bPhi$ are smooth fields supported in the interior of $Q$.

We make use of the classical regularity result for the second-order wave equation on Lipschitz domains of \cite[Chapter 3, Theorems~8.1 and 8.2]{LiMaI}, applied with $H=L^2\OO$ and $V=H^1_0\OO$.
If $f\in L^2(Q)$, then the solution $u$ to the inhomogeneous IBVP
%\ipnote{Old version: backward problem.}
\begin{align}\label{eq:IBVP_uf}
\begin{cases}
\displaystyle
-\Delta u + c^{-2}\der{{}^2}{t^2}u = f&\iin Q,\\
\displaystyle
u(\bx,0)=0,\quad
\der ut(\bx,0)=0&\oon\Omega,\\
u=0 &\oon \deO\times (0,T)
\end{cases}
\end{align}
belongs to $C^0([0,T];H^1\OO)\cap C^1([0,T];L^2\OO)$ and satisfies the stability bound
\begin{align*}%\label{eq:WaveStability}
% &u\in C^0\big([0,T];H^1\OO\big)\cap C^1\big([0,T];L^2\OO\big) \qquad\text{and}\\ &
\N{u}_{L^2(Q)} + \N{\nabla u}_{L^2(Q)^3} + \N{\der ut}_{L^2(Q)}
\le  C_S \N{f}_{L^2(0,T;L^2\OO)}.
\end{align*}
% satisfies the stability bound
% \begin{align*}%\label{eq:WaveStability}
% \N{u}_{C^0([0,T];H^1\OO)}
% +\N{\der ut}_{C^0([0,T];L^2\OO)}
% %+\N{c^{-1}\der ut}_{C^0([0,T];L^2\OO)}
% \le 
% C_S \N{f}_{L^2(0,T;L^2\OO)}.
% \end{align*}
Here and in the rest of this proof, we denote by $C_S$ a positive constant that depends
only on $\Omega$, $T$, and $c$ and whose value may change at each occurrence.
The scalar field $z$ satisfies \eqref{eq:IBVP_uf} (since $\psi(\bx,0)=0$ implies $\der zt(\bx,0)=0$) with 
$f=\der{\psi}{t}-\nabla\cdot\bPhi$ %$f=\psi'-\nabla\cdot\bPhi$
so that 
\begin{align}
\nonumber
&z\in C^0\big([0,T];H^1\OO\big)\cap C^1\big([0,T];L^2\OO\big) \qquad\text{and}\\
% &\N{z}_{C^0([0,T];H^1\OO)}+\N{c^{-1}\der zt}_{C^0([0,T];L^2\OO)}
% \le C_W \N{\psi'-\nabla\cdot\bPhi}_{L^2(0,T;L^2\OO)}\\
% &\N{z}_{L^2(0,T;H^1\OO)}+\N{c^{-1}\der zt}_{L^2(0,T;L^2\OO)}
&\N{z}_\LtQ+\N{\nabla z}_{\LtQ^3}+\N{\der zt}_\LtQ
\le C_S
\N{\der{\psi}{t}-\nabla\cdot\bPhi}_{L^2(0,T;L^2\OO)}.
%
%&\N{\frac1{cT}z}_\LtQ+\N{\nabla z}_{\LtQ^3}+\N{c^{-1}\der zt}_\LtQ
%\le C_W
%\N{\psi'-\nabla\cdot\bPhi}_{L^2(0,T;L^2\OO)}.
\label{eq:zH1bound}
\end{align}

In the following will make make use of
\begin{equation}\label{eq:trickPoincare}
w(0)=0 \; \Rightarrow\; \N{w}^2_{L^2(0,T)}\le \frac{T^2}{2}\N{\der wt}_{L^2(0,T)}^2.
\end{equation}

From the PDEs in \eqref{eq:zzIBVP},
the regularity of $z$ \eqref{eq:zH1bound},
%$\partial \bzeta/\partial t=\bPhi-\nabla z$ 
and the initial condition $\bzeta(\bx,0)=\bzero$, we have also 
\begin{align*}
\bzeta\in C^0\big([0,&T];\Hdiv\big)\cap C^1\big([0,T];L^2\OO^3\big)\qquad\text{and}\\
% &\N{\bzeta}_{L^2(0,T;\Hdiv)}+\N{c^{-1}\der \bzeta t}_{L^2(0,T;L^2\OO^3)}\\
\N{\bzeta}_{\LtQ^3}&+\N{\nabla \cdot\bzeta}_\LtQ+\N{\der \bzeta t}_{\LtQ^3}
\\
&\overset{\eqref{eq:trickPoincare}}\le\N{\nabla \cdot\bzeta}_\LtQ+C_S\N{\der \bzeta t}_{\LtQ^3}\\
&=\N{\psi-c^{-2}\der zt}_\LtQ+C_S\N{\bPhi-\nabla z}_{\LtQ^3}\\
&\le \N{\psi}_\LtQ+C_S\N{\bPhi}_{\LtQ^3}+\N{c^{-2}\der zt}_\LtQ+C_S\N{\nabla z}_{\LtQ^3}\\
&\overset{\eqref{eq:zH1bound}}\le
C_S\Big(\N{\psi}_\LtQ+\N{\bPhi}_{\LtQ^3}
+\N{\der{\psi}{t}-\nabla\cdot\bPhi}_{L^2(0,T;L^2\OO)}\Big).
\end{align*}
% \begin{align*}
% &\bzeta\in C^0([0,T];\Hdiv)\cap C^1([0,T];L^2\OO^3)\qquad\text{and}\\
% % &\N{\bzeta}_{L^2(0,T;\Hdiv)}+\N{c^{-1}\der \bzeta t}_{L^2(0,T;L^2\OO^3)}\\
% &\N{\frac1{cT}\bzeta}_{\LtQ^3}+\N{\nabla \cdot\bzeta}_\LtQ+\N{c^{-1}\der \bzeta t}_{\LtQ^3}
% \\
% &\le\N{\nabla \cdot\bzeta}_\LtQ+2\N{c^{-1}\der \bzeta t}_{\LtQ^3}\\
% &=\N{\psi-c^{-2}\der zt}_\LtQ+2\N{c^{-1}(\bPhi-\nabla z)}_{\LtQ^3}\\
% &\le\Big(\N{\psi}_\LtQ+2\N{c^{-1}\bPhi}_{\LtQ^3}
% +\frac{2C_W}c \N{\psi'-\nabla\cdot\bPhi}_{L^2(0,T;L^2\OO)}\Big).
% \end{align*}
%
We show that, for any $t\in(0,T)$, $\bzeta(\cdot,t)\in \bX_N(\Omega\times\{t\})$, where
\[
 \bX_N(\Omega\times\{t\}):= \big\{\bxi\in H(\div;\Omega\times\{t\})\cap H(\curl;\Omega\times\{t\}),\; \bxi\times\bn^x_\Omega=\bzero\ 
 \text{on\ } \partial\Omega\times\{t\}\big\}.                                      
\]
We already know that $\bzeta(\cdot,t)\in H(\div;\Omega\times\{t\})$ for all $t\in(0,T)$.
Since $\bPhi$ is supported inside $Q$, from the first PDE $\nabla z+\partial\bzeta/\partial t=\bPhi$ and the boundary condition $z=0$ on $\deO\times I$ 
in \eqref{eq:zzIBVP}, we have $\der{}t\bzeta\times\bn^x_\Omega=-\nabla z\times\bn^x_\Omega=-\nabla_T z\times\bn^x_\Omega=\bzero$ on $\deO\times I$, where $\nabla_T$ is the 
tangential gradient on $\deO\times I$, i.e.\ $\nabla_T z=\nabla z-\bn^x_\Omega(\bn^x_\Omega\cdot\nabla z)$.
Since $\bzeta(\bx,0)=\bzero$ for $\bx\in\deO$, we deduce $\bzeta\times\bn^x_\Omega=\bzero$ on $\partial\Omega\times I$.
In order to show that also $\curl\bzeta(\cdot,t)$ is bounded in $L^2(\Omega\times\{t\})$, we observe that,
%
%To control the curl of $\bzeta$,
%
% we first define two ``negative order'' norms for any $f\in L^2(0,T;H)$, $H$ being an Hilbert space with inner product $(\cdot,\cdot)_H$:
% $$
% \N{f}_Y:=\sup_{\substack{0\ne g\in H^1(0,T;H),\\g(T)=0}} 
% \frac{\int_0^T (f,g)_H \di t}{\N{\der gt}_{L^2(0,T;H)}},
% \qquad
% \N{f}_Z:=\N{(\bx,t)\mapsto\int_0^t f(s)\di s}_{L^2(0,T;H)}.
% $$
% By choosing $g$ to be the primitive $F$ of $f$ with value $0$ at time $T$, we see that 
% $$
% \N{f}_Z
% =\N{\der Ft}_Z
% =\N{F}_{L^2(0,T;H)}
% \ge???? \frac{\int_0^T (f,F)_H \di t}{\N{f}_{L^2(0,T;H)}}
% = \frac{\int_0^T (f,g)_H \di t}{\N{g'}_{L^2(0,T;H)}}
% \le\N{f}_Y
% $$
from $\nabla\times\der{}t\bzeta=\nabla\times\bPhi$ and the initial condition for $\bzeta$, we have
$$
\N{\nabla\times\bzeta}_{L^2(\Omega\times\{t\})}=
%\N{\nabla\times\bPhi}_{H^{-1}_*(0,T;L^2\OO)}:=
\N{\int_0^t\nabla\times\bPhi(\cdot,s)\di s}_{L^2(\Omega)}.
% 
% \N{\nabla\times\bzeta}_{L^2(Q)}=
% %\N{\nabla\times\bPhi}_{H^{-1}_*(0,T;L^2\OO)}:=
% \N{(\bx,t)\mapsto\int_0^t\nabla\times\bPhi(\bx,s)\di s}_{L^2(Q)}.
$$
% $\N{\nabla\times\bzeta}_{L^2(Q)}\le T\N{\nabla\times\bPhi}_{L^2(Q)}$, 
% using again the initial condition of $\bzeta$.
%
According to the assumptions stipulated in \S\ref{s:IBVP}, $\Omega$ is a Lipschitz polyhedron, thus by \cite[Proposition~3.7]{ABD98} there exists $\epsilon_\Omega>0$ such that
$$
\bX_N(\Omega\times\{t\})\subset H^{1/2+\epsilon_\Omega}(\Omega\times\{t\})^3 \qquad\forall t\in I
$$
% $$ \big\{\bxi\in \Hdiv\cap\Hcurl,\; \bxi\times\bn^x_\Omega=\bzero\big\}
% \subset H^{1/2+\epsilon_\Omega}\OO^3. $$
with continuous inclusion.
Thus, for this $\epsilon_\Omega$ we have %for $\epsilon_\Omega>0$ as in \cite[Proposition~3.7]{ABD98},
% \begin{align*}
% \N{\bzeta}_{L^2(H^{1/2+\epsilon}\OO,0,T)}
% &\le C_{\epsilon,\Omega} 
% \big(\N{\bzeta}_{L^2([0,T],H(\dive,\OO))}+\N{\bzeta}_{L^2([0,T],H(\curl,\OO))}\big)\\
% &\le
% \end{align*}
\begin{align*}
\N{\bzeta}_{H^{1/2+\epsilon_\Omega}(\Omega\times\{t\})^3}
&\le C_S %C_{\epsilon_\Omega,\Omega} 
\big(\N{\bzeta}_{H(\dive;\Omega\times\{t\})}+\N{\bzeta}_{H(\curl;(\Omega\times\{t\})}\big)
\qquad \forall t\in I,
\end{align*}
where $C_S$, here and in the following, depends on $\Omega$ also through $\epsilon_\Omega$.
% \ip{where $C_R>0$ only depends on $\epsilon_\Omega$ and $\Omega$.}
% \ipnote{Since $\epsilon_\Omega$ only depends on $\Omega$, we can drop the dependency on $\epsilon_\Omega$ (in the following we simply have $C_S$ instead of $C_{S,\epsilon_\Omega}$). Otherwise, we should specify the dependency on $\epsilon_\Omega$ also in the statement.}
%
Collecting the bounds on $\bzeta$ we have that
\begin{align}
\nonumber
\N{\bzeta}_{L^2(0,T;H^{1/2+\epsilon_\Omega}(\Omega)^3)}
&\le C_S
%\ip{2C_R\sqrt{3}} %C_{\epsilon_\Omega,\Omega}  I THINK sqrt3 WAS NOT NEEDED ANYWAY
\big(
\N{\bzeta}_{\LtQ^3}
+\N{\nabla\cdot\bzeta}_\LtQ
+\N{\nabla\times\bzeta}_{\LtQ^3}
\big)
\\
&\le C_S %\ip{C_{S,\epsilon_\Omega}} %C_{\epsilon_\Omega,\Omega,T,c} 
\N{(\psi,\bPhi)}_\bX.
% \Big(\N{\psi}_\LtQ+\N{\bPhi}_{\LtQ^3}
% +\N{\psi'-\nabla\cdot\bPhi}_{L^2(0,T;L^2\OO)}
% +\N{\nabla\times\bPhi}_{\LtQ^3}\Big)
\label{eq:zetaHepsBound}
\end{align}
% \ip{Here and in the following, we denote by $C_{S,\epsilon_\Omega}$ a positive constant that depends only on $\Omega,T,c$, and $\epsilon_\Omega$ and whose value may change at each occurrence.}
% %where the bounding constant  depends only on $\epsilon_\Omega,\Omega,T,c$.

We use the previous bounds in order to control the traces of $z$ and $\bzeta\cdot\bn^x_F$ on the relevant time-like faces.
These traces are clearly bounded by the $H^1$ and $H^{1/2+\epsilon_\Omega}$ volume norm 
(which in turn are controlled by the $\bX$ norm of the data from \eqref{eq:zH1bound} and \eqref{eq:zetaHepsBound}). 
On the other hand, the trace inequality is precisely the point where the bounding constants depend on the mesh, so this is the point where we use the definition of $\alpha$ and $\beta$ 
in order to ensure that negative powers of the element sizes contain only the global meshwidth $h_\calT$.

Using the generalisation to $n=3$ of the trace inequalities \cite[eq.~(19)--(20)]{HMP13}
(see also Lem\-ma~\ref{lem:Trace} below)
%\ipnote{\am{AM: First identity is in lemma below. For the second, comparing with our old 2D proof,  I'm convinced it works also here, maybe with different powers of rho. I'd leave this as is.} \ip{OK (the Theorem A.2 in Markus' paper that we used in our exp paper is valid in $d$ dimensions.)}} 
we have, for some $C_\rho$ only depending on $\rho$ and $\epsilon_\Omega$,
%\amnote{I've dropped one line in each chain of inequalities}
% \ipnote{In the last but one step, I have incorporated $\max\{1,\max\{\diam(K_0)^2\}\}\le\max\{1,\diam(\Omega)\}$ and the dependency on $c$ in $C_S$.}
\begin{align*}
&\Norm{\beta^{-1/2}z}^2_{L^2(\Ftime)}\\
&\le
\sum_{K\in\calT_h}\int_{t_K^-}^{t_K^+}\N{\beta^{-1/2}z}^2_{L^2(\deK_0\cap\Ftime)}\di t\\
&\le C_\rho
\sum_{K\in\calT_h}\int_{t_K^-}^{t_K^+}
\N{\beta^{-1}}_{L^\infty(\deK\cap\Ftime)}
\Big(\diam(K_0)^{-1}\N{z}^2_{L^2(K_0)}
+\diam(K_0)\N{\nabla z}^2_{L^2(K_0)^3}
\Big)\di t\\
&\le C_\rho \frac{\ttb_*^{-1}%\N{\ttb^{-1}}_{L^\infty(\Ftime)}
}{h_\calT}
\sum_{K\in\calT_h}\frac1{c_{|_K}}\int_{t_K^-}^{t_K^+}
\Big(\N{z}^2_{L^2(K_0)}+\diam(K_0)^2\N{\nabla z}^2_{L^2(K_0)^3}\Big)\di t\\
% &= C_\rho \frac{\ttb_*^{-1}%\N{\ttb^{-1}}_{L^\infty(\Ftime)}
% }{h_\calT}
% \sum_{K\in\calT_h}\frac1{c_{|_K}}
% \Big(\N{z}^2_{L^2(K)}
% +\diam(K_0)^2\N{\nabla z}^2_{L^2(K)^3}\Big)
% \\
&\le C_SC_\rho \frac{\ttb_*^{-1}}{h_\calT}
\left(\N{z}^2_{L^2(Q)}+\N{\nabla z}^2_{L^2(Q)^3}\right)
\\
&\overset{\eqref{eq:zH1bound}}\le C_S C_\rho 
\frac{\ttb_*^{-1}%\N{\ttb^{-1}}_{L^\infty(\Ftime)}
}{h_\calT}
\N{\der{\psi}{t}-\nabla\cdot\bPhi}_{L^2(0,T;L^2\OO)}^2
% &= C_\rho \frac{\ttb_*^{-1}%\N{\ttb^{-1}}_{L^\infty(\Ftime)}
% }{h_\calT}
% \sum_{K\in\calT_h}\Big(\N{c^{-1/2}z}^2_{L^2(K)}
% +\diam(K_0)^2\N{c^{-1/2}\nabla z}^2_{L^2(K)}\Big)
% \\
%&\overset{\eqref{eq:zH1bound}}\le C_W^2 C_\rho 
%\frac{\ttb_*^{-1}%\N{\ttb^{-1}}_{L^\infty(\Ftime)}
%}{h_\calT\mmbox{c_-}}
%\N{\ip{\der{\psi}{t}}-\nabla\cdot\bPhi}_{L^2(0,T;L^2\OO)}^2,
\\&\le C_S C_\rho \frac{\ttb_*^{-1}}{h_\calT}\N{(\psi,\bPhi)}_\bX^2,
\end{align*}
%(What do I do with this diameter? I don't expect it here... mistake somewhere.
%Also need to deal with c in reasonable way.)
and
\begin{align*}
&\Norm{\alpha^{-1/2}\bzeta\cdot \bn^x_F}^2_{L^2(\Ftime\cup\FD)}\\
&\le
\sum_{K\in\calT_h}
\int_{t_K^-}^{t_K^+}\N{\alpha^{-1/2}\bzeta\cdot \bn^x_K}^2_{L^2(\deK_0)}\di t\\
&\le C_\rho
\sum_{K\in\calT_h}\int_{t_K^-}^{t_K^+}
\N{\alpha^{-1}}_{L^\infty(\deK\cap(\Ftime\cup\FD))}\\
&\hspace{30mm}\cdot
\Big(\diam(K_0)^{-1}\N{\bzeta}^2_{L^2(K_0)^3}
+\diam(K_0)^{2\epsilon_\Omega}\abs{\bzeta}^2_{H^{1/2+\epsilon_\Omega}(K_0)^3}
\Big)\di t
\\
&\le C_\rho \frac{\tta_*^{-1}}{h_\calT}
\sum_{K\in\calT_h}c_{|_K}\int_{t_K^-}^{t_K^+}
\Big(\N{\bzeta}^2_{L^2(K_0)^3}
+\diam(K_0)^{1+2\epsilon_\Omega}\abs{\bzeta}^2_{H^{1/2+\epsilon_\Omega}(K_0)^3}
\Big)\di t\\
% &= C_\rho \frac{\tta_*^{-1}}{h_\calT}
% \sum_{K\in\calT_h}c_{|_K}
% \Big(\N{\bzeta}^2_{L^2(K)^3}
% +\diam(K_0)^{\am{1+2\epsilon_\Omega}}\abs{\bzeta}^2_{L^2(t_K^-,t_K^+;H^{1/2+\epsilon_\Omega}(K_0))^3}
% \Big)\\
&
\le C_S C_\rho\frac{\tta_*^{-1}}{h_\calT}
\left(
\N{\bzeta}^2_{L^2(Q)^3}+\abs{\bzeta}^2_{L^2(0,T;H^{1/2+\epsilon_\Omega}(\Omega)^3)}
\right)
\\
&\overset{\eqref{eq:zetaHepsBound}}\le %C_{S,\epsilon_\Omega}
C_S C_\rho 
%C_{\epsilon_\Omega,\Omega,T,c}^2 
\frac{\tta_*^{-1}}{h_\calT}
%\frac{\tta_*^{-1}\mmbox{c_+}}{h_\calT}
\N{(\psi,\bPhi)}_\bX^2.
\end{align*}
The last two bounds give the desired result.
\end{proof}

Although in Proposition~\ref{prop:TimeFaceBounds} we did not track the dependence of the bounding constants on the wave speed $c$, we have defined the Trefftz-DG numerical flux parameters $\alpha$ and $\beta$ using the local wave speeds $\ttc_\pm$. 
This ensures that $\tta$ and $\ttb$ are dimensionless, while $\beta$ and $\alpha^{-1}$ maintain the dimensions of speeds.

Combining the bounds on the different terms of the $|\cdot|_{\Fh}$
seminorm obtained in Propositions~\ref{prop:TimeFaceBounds} and
\ref{prop:DualStability} with Proposition~\ref{prop:Duality},
immediately gives an error estimate in $\bX^*$ norm, dual to
\eqref{eq:XNorm3D}.

In order to control a more concrete norm of the error, we introduce a slightly weaker norm, defined using Bochner spaces and standard Sobolev spaces with negative exponents ($H^{-1}(I):=H^1_0(I)^*$ and $H^{-1}\OO^3:=(H^1_0\OO^3)^*$).

% In order to have a more concrete handle on the norm of the error we can bound, we control the error in a slightly weaker norm, defined using Bochner norms and standard Sobolev spaces with negative indices ($H^{-1}(I)=H^1_0(I)^*$ and $H^{-1}\OO^3=(H^1_0\OO^3)^*$).

\begin{cor}\label{cor:Hminus1}
Under the assumptions of Proposition \ref{prop:TimeFaceBounds}, the following error bound holds:
\begin{align*}
% \N{(v-v_{hp},\bsigma-\bsigma_{hp})}_{\bX^*} 
&\N{v-v\hp}_{H^{-1}(0,T;L^2\OO)}+\N{\bsigma-\bsigma\hp}_{L^2(0,T;H^{-1}\OO^3)}\\
&\qquad\le  \Cstab (1+C_c) %\Big(\Cstab+C_{S,\rho}(\tta_*^{-1/2}+\ttb_*^{-1/2})h_\calT^{-1/2}\Big)
 \inf_{(w,\btau)\in\bVp\Th} \Tnorm{(v-w,\bsigma-\btau)}\DGp,
\end{align*}
where $C_c$ is as in \eqref{eq:Continuity} and 
$\Cstab=(\tCstab+C_{S,\rho}^{1/2}(\tta_*^{-1/2}+\ttb_*^{-1/2})h_\calT^{-1/2})$,
with
$\tCstab$ as in \eqref{eq:CstabNoTimeLike} and 
$C_{S,\rho}$ depending only on $\Omega$, $T$, $c$ and $\rho$.
% 
% Moreover, if $e_U\in C^0([0,T];L^2\OO)$ and $\der{e_U}t=v-v\hp$, then 
% \amnote{What is right regularity of $e_U$ to assume?}
% $$
% \N{e_U}_{L^2(Q)}
% \le  (1+C_c)\Big(\Cstab+C_{\am{S,\rho}}(\tta_*^{-1/2}+\ttb_*^{-1/2})h_\calT^{-1/2}\Big)
% \inf_{(w,\btau)\in\bVp\Th} \Tnorm{(v-w,\bsigma-\btau)}\DGp.
% $$
\end{cor}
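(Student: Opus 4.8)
The plan is to assemble ingredients already established. First I would verify that Assumption~\ref{ass:DualStability} holds for the space $\bX$ of Proposition~\ref{prop:TimeFaceBounds}. Under the present hypotheses ($\deO=\GD$, so $\FN=\FR=\emptyset$, and $n=3$) the seminorm $\abs{(z,\bzeta)}_{\Fh}$ of \eqref{eq:FhNorm} reduces to exactly two blocks: the traces of $z$ and $\bzeta$ on $\Fspa\cup\FT$, and the quantity $\N{\alpha^{-1/2}\bzeta\cdot\bn^x_F}^2_{L^2(\Ftime\cup\FD)}+\N{\beta^{-1/2}z}^2_{L^2(\Ftime)}$. Proposition~\ref{prop:DualStability} controls the first block: its proof uses only energy identities across space-like interfaces (and vacuous Robin terms), hence applies here even though $\Ftime\ne\emptyset$, and bounds the first block by $\tCstab^2(\N{c\psi}^2_{L^2(Q)}+\N{\bPhi}^2_{L^2(Q)^3})\le C(c)\,\tCstab^2\N{(\psi,\bPhi)}_\bX^2$ with $\tCstab$ as in \eqref{eq:CstabNoTimeLike}. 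The second block is precisely the object of Proposition~\ref{prop:TimeFaceBounds}: with the flux parameters $\alpha,\beta$ chosen as there, it is bounded by $C_{S,\rho}(\tta_*^{-1}+\ttb_*^{-1})h_\calT^{-1}\N{(\psi,\bPhi)}_\bX^2$. Adding the two, taking a square root, and using $\sqrt{a+b}\le\sqrt a+\sqrt b$ together with $(\tta_*^{-1}+\ttb_*^{-1})^{1/2}\le\tta_*^{-1/2}+\ttb_*^{-1/2}$, I obtain \eqref{eq:DualStability} with $\Cstab$ of the announced form, the $\Omega$-, $T$- and $c$-dependent proportionality factor being absorbed as we do not track constants in this section.

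Given this, Proposition~\ref{prop:Duality} yields $\N{(w,\btau)}_{\bX^*}\le\Cstab\Tnorm{(w,\btau)}\DG$ for all $(w,\btau)\in\bT\Th$, in particular for the Galerkin error $(v-v\hp,\bsigma-\bsigma\hp)\in\bT\Th$; combined with the quasi-optimality \eqref{eq:QuasiOpt} of Theorem~\ref{th:QO}, this gives the error bound in the $\bX^*$ norm dual to \eqref{eq:XNorm3D}. It then remains to compare $\N{\cdot}_{\bX^*}$ with the concrete Bochner--Sobolev norm in the statement. For the scalar component I would test the $\bX^*$ norm with $(\phi,\bzero)$, $\phi\in C_0^\infty(Q)$: since $\phi(\cdot,0)=0$, the Poincar\'e inequality \eqref{eq:trickPoincare} in time gives $\N{(\phi,\bzero)}_\bX=(\N{\phi}_{L^2(Q)}^2+\N{\partial_t\phi}_{L^2(Q)}^2)^{1/2}\le C(T)\N{\phi}_{H^1_0(0,T;L^2\OO)}$, whence $\int_Q(v-v\hp)\phi\le C(T)\N{(v-v\hp,\bsigma-\bsigma\hp)}_{\bX^*}\N{\phi}_{H^1_0(0,T;L^2\OO)}$; by density of $C_0^\infty(Q)$ in $H^1_0(0,T;L^2\OO)$, taking the supremum over $\phi$ bounds $\N{v-v\hp}_{H^{-1}(0,T;L^2\OO)}$. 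For the vector component, testing with $(0,\bPhi)$, $\bPhi\in C_0^\infty(Q)^3$, the elementary estimates $\N{\nabla\cdot\bPhi}_{L^2(Q)},\,\N{\nabla\times\bPhi}_{L^2(Q)^3}\le C\N{\bPhi}_{L^2(0,T;H^1_0\OO^3)}$ and $\N{(\bx,t)\mapsto\int_0^t\nabla\times\bPhi(\bx,s)\di s}_{L^2(Q)^3}\le T\N{\nabla\times\bPhi}_{L^2(Q)^3}$ give $\N{(0,\bPhi)}_\bX\le C(\Omega,T)\N{\bPhi}_{L^2(0,T;H^1_0\OO^3)}$, and density of $C_0^\infty(Q)^3$ in $L^2(0,T;H^1_0\OO^3)$ bounds $\N{\bsigma-\bsigma\hp}_{L^2(0,T;H^{-1}\OO^3)}$ by $C(\Omega,T)\N{(v-v\hp,\bsigma-\bsigma\hp)}_{\bX^*}$. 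Adding these two bounds and folding the $C(\Omega,T)$ factor into $\Cstab$ completes the argument.

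The one genuinely new step is the last comparison between the abstract dual norm $\N{\cdot}_{\bX^*}$ and the Bochner--Sobolev norms; everything else is a mechanical combination of Propositions~\ref{prop:DualStability}, \ref{prop:TimeFaceBounds}, \ref{prop:Duality} and Theorem~\ref{th:QO}. The points that need mild care are the two density statements (so that the negative-order dual norms may be computed over the convenient dense subspaces of smooth compactly supported fields) and checking that the additional derivative terms in the $\bX$ norm \eqref{eq:XNorm3D} are dominated by $\N{\phi}_{H^1_0(0,T;L^2\OO)}$, respectively $\N{\bPhi}_{L^2(0,T;H^1_0\OO^3)}$, when the scalar or the vector part is switched off. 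None of this is a conceptual obstacle; the main nuisance is simply the (here deliberately untracked) dependence of the constants on the mesh.
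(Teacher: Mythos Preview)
Your proposal is correct and follows the same route as the paper: verify Assumption~\ref{ass:DualStability} for the space $\bX$ of Proposition~\ref{prop:TimeFaceBounds} by combining Propositions~\ref{prop:DualStability} and~\ref{prop:TimeFaceBounds}, apply Proposition~\ref{prop:Duality} and Theorem~\ref{th:QO}, then compare $\N{\cdot}_{\bX^*}$ with the concrete Bochner--Sobolev norms. The only difference is in this last comparison: you test the $\bX^*$ norm separately with $(\phi,\bzero)$ and $(0,\bPhi)$ and invoke density, whereas the paper introduces the intermediate product space $\bY=H^1_0(0,T;L^2\OO)\times L^2(0,T;H^1_0\OO^3)$, shows $\N{\cdot}_\bX\le\N{\cdot}_\bY$ once, and bounds the sum of the two dual norms by a single supremum over $\bY$ (normalising each component to have norm $1/\sqrt2$). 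The two arguments are equivalent; yours is slightly more hands-on, the paper's avoids the componentwise split.
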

\begin{proof}
Propositions \ref{prop:DualStability} and \ref{prop:TimeFaceBounds} ensure that Assumption~\ref{ass:DualStability} holds for the space $\bX$ defined in Proposition~\ref{prop:TimeFaceBounds}.
Thus Proposition~\ref{prop:Duality} applied to the Galerkin error, together with the quasi-optimality \eqref{eq:QuasiOpt}, gives
\begin{align*}
&\N{(v-v_{hp},\bsigma-\bsigma_{hp})}_{\bX^*} \\
&\le  (1+C_c) \Big(\tCstab+C_{S,\rho}^{1/2} (\tta_*^{-1/2}+\ttb_*^{-1/2})h_\calT^{-1/2}\Big)
\inf_{(w,\btau)\in\bVp\Th} \Tnorm{(v-w,\bsigma-\btau)}\DGp.
\end{align*}
We define
\begin{align*}
\bY:=&H^1_0(0,T;L^2\OO)\times L^2(0,T;H^1_0\OO^3),\\
\N{(\psi,\bPhi)}^2_{\bY}:=&
\N{\psi}^2_{H^1(0,T;L^2\OO)}+\N{\bPhi}^2_{L^2(0,T;H^1_0\OO^3)}
\\
:=&\N{\psi}^2_{L^2(Q)}+\N{\der\psi t}^2_{L^2(Q)}
+\N{\bPhi}^2_{L^2(Q)^3}+\N{\nabla\cdot\bPhi}^2_{L^2(Q)}+\frac{T^2}2\N{\nabla\times\bPhi}^2_{L^2(Q)^3}
\end{align*}
(recall that $H^1_0\OO^3=\{\bPhi\in L^2\OO^3,\nabla\cdot\bPhi\in
L^2\OO,\nabla\times\bPhi\in
L^2\OO^3,\bPhi\cdot\bn_\Omega^x=0,\bPhi\times\bn_\Omega^x=\bzero\oon\deO\}$;
see e.g.\ \cite[Theorem~2.5]{ABD98}).
%\ipnote{Loro citano Girault-Raviart, Chapter  I, Lemma 2.5; non ho trovato il libro in rete per cui non sono  riuscita a controllare...\am{Proof in GR is identical to that in ABDG, all clear except last step, why don't you trust it?}  \ip{Ovvio che mi fido! Era solo che Amrouche non \`e il primo articolo in cui \`e stato dimostrato (loro stessi citano ``per esempio'' GR. A me va bene citare Amrouche come ``e.g.'').}}
By \eqref{eq:XNorm3D} and \eqref{eq:trickPoincare}, $\bY\subset\bX$ and $\N{(\psi,\bPhi)}_{\bX}\le\N{(\psi,\bPhi)}_{\bY}$.
This allows to control the Trefftz-DG error in the desired norm: for all $(w,\btau)\in \bX^*$
\begin{align*}
\N{w}&_{H^{-1}(0,T;L^2\OO)}+\N{\btau}_{L^2(0,T;H^{-1}\OO^3)}\\
&=
\sup_{0\ne\psi\in H^1_0(0,T;L^2\OO)}\frac{\int_Q w\psi\di x\di t}{\N{\psi}_{H^1(0,T;L^2\OO)}}
+\sup_{\bzero\ne\bPhi\in L^2(0,T;H^1_0\OO^3)}
\frac{\int_Q\btau\cdot\bPhi\di x\di t}{\N{\bPhi}_{L^2(0,T;H^1_0\OO^3)}}\\
&=
\sup_{\substack{\psi\in  H^1_0(0,T;L^2\OO) \\ \N{\psi}_{H^1(0,T;L^2\OO)}=\frac1{\sqrt2}}}
\int_Q w\psi\di x\di t
+\sup_{\substack{\bPhi\in L^2(0,T;H^1_0\OO^3) \\ \N{\bPhi}_{L^2(0,T;H^1_0\OO^3)}=\frac1{\sqrt2}}}
\int_Q\btau\cdot\bPhi\di x\di t\\
% &=
% \sup_{\substack{(\psi,\bPhi)\in  \bY \\ 
% \N{\psi}_{H^1(0,T;L^2\OO)}=\N{\bPhi}_{L^2(0,T;H^1_0\OO^3)}=\frac1{\sqrt2}}}
% \int_Q(w\psi+\btau\cdot\bPhi)\di x\di t\\
&=
\sup_{\substack{(\psi,\bPhi)\in  \bY \\ 
\N{\psi}_{H^1(0,T;L^2\OO)}=\N{\bPhi}_{L^2(0,T;H^1_0\OO^3)}=\frac1{\sqrt2}}}
\frac{\int_Q(w\psi+\btau\cdot\bPhi)\di x\di t}{\N{(\psi,\bPhi)}_\bY}\\
&\le
\sup_{(0,\bzero)\ne(\psi,\bPhi)\in  \bY}
\frac{\int_Q(w\psi+\btau\cdot\bPhi)\di x\di t}{\N{(\psi,\bPhi)}_\bY}\\
&\le
\sup_{(0,\bzero)\ne(\psi,\bPhi)\in  \bX}
\frac{\int_Q(w\psi+\btau\cdot\bPhi)\di x\di t}{\N{(\psi,\bPhi)}_\bX}
=\N{(w,\btau)}_{\bX^*}.
\end{align*}
We conclude
%obtain the first assertion 
by choosing $w=v-v\hp$, $\btau=\bsigma-\bsigma\hp$ and combining with the previous bound.
% 
% If $e_U\in C^0([0,T];L^2\OO)$ and $\der{e_U}t=w$, then 
% \begin{align*}
% \N{e_U}_{L^2(Q)}
% &=\sup_{0\ne}
% \end{align*}
\end{proof}
\am{Under the assumptions of Corollary \ref{cor:Hminus1}, if all space--time mesh elements have ``length in time'' (i.e.\ $t_K^+-t_K^-$) proportional to $h_t$, then the value of $\Cstab(1+C_c)\approx (h_t^{-1/2}+h_\calT^{-1/2})$, which means that the convergence of the Galerkin error in the $H^{-1}(0,T;L^2\OO)\times L^2(0,T;H^{-1}\OO^3)$ norm is half order slower than the best-approximation error in $\Tnorm{\cdot}\DG$ norm.
}

\am{We stress once again that in the cases
{\em i)} $n=1$ and rectangular meshes with interfaces parallel to the space--time axes, 
and {\em ii)} $n\ge 1$, Robin boundary conditions only, and meshes with no time-like mesh interfaces,
adjoint stability holds with $\bX=L^2(Q)^{1+n}$,
and thus error estimates in the $L^2(Q)^{1+n}$ norm follow. 
The two cases are described in \cite{SpaceTimeTDG} and Corollary~\ref{cor:NoTimeLike}, respectively.
The identification of other situations where this holds true is an open problem.}

\section{Polynomial Trefftz spaces}\label{s:Spaces}

So far we have not specified any discrete (test and trial) space $\bVp \Th$: the only condition we imposed is the Trefftz property $\bVp \Th\subset\bT\Th$.
For time-harmonic problems, non-trivial polynomial Trefftz spaces do not exist and typical bases are constituted by plane waves or Fourier--Bessel functions (several other basis have been developed, see \cite[\S3]{TrefftzSurvey} for a detailed overview).
In the current time-domain setting, however, one has more freedom and can choose piecewise-polynomial Trefftz spaces.
This is due to the fact that the PDE we discretise is {\em homogeneous}, in the sense that all terms appearing in it are derivatives of the same order, thus polynomial solutions are admitted and give high-order approximation properties (see Lemma~\ref{lem:Taylor} below).

We fix some notation.
Given $p\in\IN_0:=\IN\cup\{0\}$, $k\in\IN$, $D\subset\IR^k$, we denote by $\IP^p(D)$ the space of polynomial of degree at most $p$ in $k$ variables (when we write $k=n+1$ the last one is understood as the time variable) restricted to $D$.
We use standard multi-index notation: for $\malpha\in\IN_0^n$, we denote
$|\malpha|=\alpha_1+\cdots+\alpha_n$,
$D^\malpha \varphi=\frac{\partial^{|\malpha|}\varphi}{\partial x_1^{\alpha_1}\cdots\partial x_n^{\alpha_n}}$,
$\bx^\malpha=x_1^{\alpha_1}\cdots x_n^{\alpha_n}$,
$\be_m:=(0,\ldots, 0,1,0,\dots,0)\in\IN_0^n$ with 1 in the $m$th entry, and
$\binom kj=\frac{k!}{j!(k-j)!}$ for $j\le k\in\IN_0$. 
For a space--time field $\varphi$, $D^{\malpha,\alpha_t} \varphi=\frac{\partial^{|\malpha|+\alpha_t}\varphi}{\partial x_1^{\alpha_1}\cdots\partial x_n^{\alpha_n}\partial t^{\alpha_t}}$.
For $s>0$, the broken Sobolev spaces on the mesh $\calT_h$ are denoted by
$H^s\Th:=\{w\in L^2(Q),$ s.t.\ $w_{|_K}\in H^s(K)\;\forall K\in\calT_h\}$.

The simplest discrete Trefftz space is
\begin{align*}%\label{eq:TT}
\bVp \Th=\ITp\Th:=
\prod_{K\in\calT_h}\IT^{p_K}(K),\quad\text{where}\;
\IT^{p_K}(K):=\bT(K)\cap\IP^{p_K}(\IR^{n+1})^{1+n},\quad p_K\in\IN_0,
\end{align*}
i.e.\ the space containing the fields $(w,\btau)$ that in each mesh
element $K$ are solution of the wave equations and are polynomials of
degree at most $p_K$.

If the specific first-order IBVP at hand comes from a second-order one, i.e.\ $v=\der Ut$ and $\bsigma=-\nabla U$ for some scalar $U$ solution of $-\Delta U+c^{-2}\der{{^2}}{t^2}U=0$, then one can use a slightly smaller discrete space
\begin{align}\label{eq:ST}
\bVp \Th=\IWp\Th:&=
\prod_{K\in\calT_h}\IW^{p_K}(K),\quad\text{where}\quad \\
\IW^{p_K}(K):&=\Big\{(v,\bsigma)\in\IT^{p_K}(K): v=\der Ut, \bsigma=-\nabla U, \; \text{for some}\;U\in\IP^{p_K+1}(\IR^{n+1})\Big\}.
\nonumber
\end{align}
(Note that if $(\der Ut,-\nabla U)\in \bT(K)$, then $-\Delta U+c^{-2}\der{{^2}}{t^2}U=0$ follows by the Trefftz property.)
For $n\ge 2$, not all elements of $\IT^{p_K}(K)$ belong to $\IW^{p_K}(K)$: 
%e.g.\ for $n=3$ and any $\bA\in\IP^{p+1}(\IR^n)^{n}$ we have $(0,\nabla\times\bA)\in\IT^p(K)$ but not necessarily $\in\IW^p(K)$.
% OR 
e.g., %$(0, (x_2,\ldots,x_n, -x_1))\in \IT^1(K)\setminus\IW^1(K)$; 
$(0, (x_2,0,\ldots,0))\in \IT^1(K)\setminus\IW^1(K)$;
see also Remark~\ref{rem:TvsS} below.
%\amnote{Better notation instead of $\IW^p$?
%---
%Better example $(0, (x_2,0,\ldots,0))\in \IT^1(K)\setminus\IW^1(K)$?
%}
%% \\Is S=T for n=1?}

In the next two subsections, we consider the two local polynomial Trefftz spaces $\IT^p(K)$ and $\IW^p(K)$, respectively. % for a general element $K\subset\IR^{n+1}$.
For each of them we describe a simple basis and derive high-order approximation properties in the meshwidth $h$.

\begin{remark}
If $n=1$, then the two spaces coincide: $\IW^{p_K}(K)=\IT^{p_K}(K)$.
The approximation bounds proved in the next section guarantee
$h$-convergence only; we proved sharper $p$-convergence bounds in
\cite[\S5.3--6]{SpaceTimeTDG} with different techniques that do not
easily extend to higher space dimensions.
\end{remark}

% \amnote{Since $\der{}t\curl \bsigma=\curl\nabla v=0$, if $\curl\bsigma_0$ then $\curl\bsigma(\cdot,t)=0$ for all $t$, independently of BCs.
% Our discrete solution is piecewise curl-free (if we choose second space), can we control tangential jumps in some way?
% If we use first basis, can we show that curl is conserved in some way?
% }

\subsection{The full polynomial Trefftz space \texorpdfstring{$\IT^p(K)$}{Tp(K)}}
\label{s:Tp}

\subsubsection{A basis of \texorpdfstring{$\IT^p(K)$}{Tp(K)}}
\label{s:TpBasis}

A basis for $\IT^p(K)$ can be constructed by ``evolving'' in time polynomial initial conditions.
Given any basis $\{\widetilde b_\ell(\bx)\}_{\ell=1,\ldots,\binom{p+n}n}$ of $\IP^p(\IR^n)$, a basis for $\IT^p(K)$ is given by
\begin{align*}
\Bigg\{\bb_{\ell,j}(\bx,t)\in\IT^p(K) \text{ such that }
\begin{aligned}
&\bb_{\ell,0}(\bx,0)=\big(\widetilde b_\ell(\bx), \bzero\big),\\
&\bb_{\ell,j}(\bx,0)=\big(0, \widetilde b_\ell(\bx)\be_j\big), \; j=1,\ldots, n
\end{aligned}
\Bigg\}_{\ell=1,\ldots,\binom{p+n}n;\; j=0,\ldots,n}.
\end{align*}
As a consequence
\begin{align}\label{eq:dimTp}
\dim\big(\IT^p(K)\big)
=(n+1)\binom{p+n}n.
\end{align}
We note that $\dim(\IT^p(K))=\calO_{p\to\infty}(p^n)$, while the full (vector-valued) polynomial space has much larger dimension
$\dim(\IP^p(\IR^{n+1})^{1+n})=(p+n+1)\binom{p+n}n=\calO_{p\to\infty}(p^{n+1})$.

%How can one compute explicitly the basis elements $\bb_{\ell,j}$ from $\widetilde b_\ell$?
%To this purpose, 
To compute explicitly the basis elements $\bb_{\ell,j}$ from $\widetilde b_\ell$,
we expand in monomials the general polynomial $(v,\bsigma)\in \IP^p(\IR^{n+1})^{1+n}$:
\begin{align*}
\begin{aligned}
v(\bx,t)&=\sum_{\substack{k\in\IN_0,\malpha\in\IN_0^n\\ k+|\malpha|\le p}}
a_{v,k,\malpha}\bx^\malpha t^k,
%\hspace{30mm}\text{for } a_{v,k,\malpha},a_{\sigma_1,k,\malpha},\ldots,a_{\sigma_n,k,\malpha}\in\IR.
\\
\bsigma(\bx,t)&=\Bigg(
\sum_{\substack{k\in\IN_0,\malpha\in\IN_0^n\\ k+|\malpha|\le p}}
a_{\sigma_1,k,\malpha}\bx^\malpha t^k,
\ldots,
\sum_{\substack{k\in\IN_0,\malpha\in\IN_0^n\\ k+|\malpha|\le p}}
a_{\sigma_n,k,\malpha}\bx^\malpha t^k\Bigg),
%\\&\text{for } a_{v,k,\malpha},a_{\sigma_1,k,\malpha},\ldots,a_{\sigma_n,k,\malpha}\in\IR.
\end{aligned}
\quad
\begin{aligned}
&\text{for } a_{v,k,\malpha},\\
&a_{\sigma_1,k,\malpha},\ldots,a_{\sigma_n,k,\malpha}\in\IR.
\end{aligned}
\end{align*}
% $$%\bu(\bx,t)=
% \begin{pmatrix}v(\bx,t)\\\bsigma(\bx,t)\end{pmatrix}
% =\begin{pmatrix}
% \displaystyle\sum_{\substack{k\in\IN_0,\malpha\in\IN_0^n\\ k+|\malpha|\le p}}
% a_{v,k,\malpha}\bx^\malpha t^k\\ 
% %\sum_{k+|\malpha|\le p}
% \displaystyle\sum_{\substack{k\in\IN_0,\malpha\in\IN_0^n\\ k+|\malpha|\le p}}
% a_{\sigma_1,k,\malpha}\bx^\malpha t^k\\
% \vdots\\
% %\sum_{k+|\malpha|\le p}
% \displaystyle\sum_{\substack{k\in\IN_0,\malpha\in\IN_0^n\\ k+|\malpha|\le p}}
% a_{\sigma_n,k,\malpha}\bx^\malpha t^k
% \end{pmatrix}\quad 
% \text{for } a_{v,k,\malpha},a_{\sigma_1,k,\malpha},\ldots,a_{\sigma_n,k,\malpha}\in\IR.
% $$
Then  $(v,\bsigma)\in\IT^p(K)$ if and only if the coefficients satisfy the recurrence relations
%\amnote{Here there was $|\malpha|\le p-k-1$, don't know why, now seems correct, agree? \ip{Yes ($\malpha$ can be the multiindex zero)}}
\begin{align}\label{eq:RecurrenceTp}
\begin{aligned}
a_{v,k,\malpha}&=-\frac{c^2}k \sum_{m=1}^n(\alpha_m+1)a_{\sigma_m,k-1,\malpha+\be_m},\\
a_{\sigma_m,k,\malpha}&=-\frac1k (\alpha_m+1)a_{v,k-1,\malpha+\be_m},
\end{aligned}
\qquad k=1,\ldots,p,\quad |\malpha|\le p-k, %p-k-1.
  \quad m=1,\ldots,n.
\end{align}
%Consequently, $(v,\sigma)$ is determined by the coefficients with index $k=0$, which correspond to the values at $t=0$:
These formulas allow to compute all coefficients $a_{v,k,\malpha},a_{\sigma_1,k,\malpha},\ldots,a_{\sigma_n,k,\malpha}$ starting from those with index $k=0$, which correspond to the values at $t=0$:
% $$
% \begin{pmatrix}v(\bx,0)\\\bsigma(\bx,0)\end{pmatrix}
% =\begin{pmatrix}
% \displaystyle\sum_{\substack{\malpha\in\IN_0^n, |\malpha|\le p}}
% a_{v,0,\malpha}\bx^\malpha \\ 
% \displaystyle\sum_{\substack{\malpha\in\IN_0^n, |\malpha|\le p}}
% a_{\sigma_1,0,\malpha}\bx^\malpha \\
% \vdots\\
% \displaystyle\sum_{\substack{\malpha\in\IN_0^n, |\malpha|\le p}}
% a_{\sigma_n,0,\malpha}\bx^\malpha
% \end{pmatrix}.
% $$
\begin{align*}
v(\bx,0)&=\sum_{\substack{\malpha\in\IN_0^n, |\malpha|\le p}}a_{v,0,\malpha}\bx^\malpha,\;\\
\bsigma(\bx,0)&=
\Bigg(\sum_{\substack{\malpha\in\IN_0^n, |\malpha|\le p}}a_{\sigma_1,0,\malpha}\bx^\malpha,
\ldots,
\sum_{\substack{\malpha\in\IN_0^n, |\malpha|\le p}}a_{\sigma_n,0,\malpha}\bx^\malpha\Bigg).
\end{align*}
For all $\widetilde b_\ell(\bx)=\sum_{|\malpha|\le p} a_\malpha^{(\ell)}\bx^\malpha$, 
the corresponding $n+1$ space--time basis elements $\bb_{\ell,0},\ldots,\bb_{\ell,n}$ 
are computed using the recurrence \eqref{eq:RecurrenceTp} starting from the coefficients 
$a_\malpha^{(\ell)}$ 
%used in place of $a_{v,0,\malpha}$, $a_{\sigma_1,0,\malpha}$, $\ldots$, $a_{\sigma_n,0,\malpha}$, respectively 
(i.e.\ 
$a_{v,0,\malpha}=a_\malpha^{(\ell)}$, $a_{\sigma_j,0,\malpha}=0$, $j=1,\ldots,n$ for $\bb_{\ell,0}$ 
and 
$a_{v,0,\malpha}=0$, $a_{\sigma_j,0,\malpha}=\delta_{j,j'}a_\malpha^{(\ell)}$ for $\bb_{\ell,j'}$, $j,j'=1,\ldots,n$).
%\ip{$a_{v,0,\malpha}=a_{\sigma_1,0,\malpha}=\ldots=a_{\sigma_n,0,\malpha}=a_\malpha^{(\ell)}$ for all $\malpha\in\IN_0^n$, $\abs{\malpha}\le p$.}

%There are no terms with odd powers of $t$ in the basis elements with index $\xi=0$ and no terms with even powers of $t$ in the basis elements with index $\xi=1$.
% something similar happens here...
If the space basis functions $\widetilde b_\ell$ are homogeneous polynomials, their corresponding space--time basis elements $\bb_{\ell,j}$ are homogeneous as well.

% \am{The freedom allowed in the choice of the space basis functions $\widetilde b_\ell$ allows to use.....???}

Assume that a Trefftz basis is constructed in reference coordinates $(\widehat{\bx},\widehat{t})$ for a reference velocity, say, equal to $1$.
A scaled and centred Trefftz basis in an element $K$ in the space of the physical coordinates
$(\bx,t)$, where the material velocity is $c$, can be easily derived.
Let $h_K$ be a characteristic dimension of $K$ (e.g.\ the ``anisotropic'' diameter introduced in
Assumption~\ref{as:StarShaped} below), and $(\bx_K,t_K)$ be a point in $K$. 
Given the change of variables $(\bx,t)=(h_K\widehat{\bx},h_Kc^{-1}\widehat{t})+(\bx_K,t_K)$,
if $(\widehat{v},\widehat{\bsigma})$ is Trefftz with velocity $1$, then $(v,\bsigma)$ defined as
\[
v(\bx,t)=c\,\widehat{v}(\widehat{\bx},\widehat{t}),\qquad
\bsigma(\bx,t)=\widehat{\bsigma}(\widehat{\bx},\widehat{t})
\]
is Trefftz with velocity $c$.
In this way one can use a single reference basis to define scaled bases on each $\IT^p(K)$.
% \amnote{Do we need to say anything more here? \ip{Maybe, but I don't know what.}}

\subsubsection{Approximation theory for \texorpdfstring{$\IT^p(K)$}{Tp(K)}}
\label{s:TpApprox}

%To study the approximation properties of the spaces $\ITp\Th$, we begin with a simple general lemma stating that for all {\em homogeneous} PDEs with constant coefficients, Taylor and averaged Taylor polynomials of solutions \amdelete{of the wave equation}\amnote{no, here we are saying all homogeneous PDEs} are Trefftz.

To study the approximation properties of the spaces $\ITp\Th$, we begin with a simple general lemma stating that, for linear {\em homogeneous} PDEs (i.e.\ such that all their terms are derivatives of the same  order) with constant coefficients, Taylor and averaged Taylor polynomials of solutions are Trefftz.
This implies that the Bramble-Hilbert lemma can be proved as in~\cite{BRS94,Dur83} and the orders of $h$-convergence for Trefftz spaces are the same as those for full polynomial spaces.
% \amnote{Here indices are a bit messy.
% In Lemma use $m$ for the solution regularity and $m-1$ for polynomial degree, from Duran and Brenner--Scott.
% Then in Corollary and Theorem $m_K\sim m-1$ and regularity is $s_K+1\ge m_K+1$.
% I don't like much any of the two.
% In particular in Corollary \ref{cor:ApproxUp} $m$ is different from both previous Lemma and next Theorem...
% Rewrite the first lemma with $m+1$ instead of $m$?
% Or at least use a different letter instead of $m$ in this lemma? (q?)\\
% \ip{I would change $m$ into either $k+1$ or $k$ in the Lemma; the first one is what I
%   have in mind (the Sobolev regularity always has a $+1$ for me), the
%   second one is closer to Duran and Brenner-Scott. [To be considered
%   later on.]
%   }
% }

% \amnote{I get reasonable bounds, but constants could be slightly better.
% (Endre does not improve Duran, Verfurth does not hold here, did anybody improved Duran?)
% Do I loose anything from the simplifications done in thesis to Duran's result?
% Two terms might lose something:\\
% (i) --- $\binom{j+N-1}{N-1}\le(1+j)^{N-1}$, this is equality for $N=2$ and I lose at most a factor $(N-1)!$ for large $j$'s, but I could get $n+1$ instead of $2^n$ which might look better in theorem anyway.\\
% (ii) --- $\sum1/(\malpha!)^2\le N^{2j}/(j!)^2$ from \cite[(B.9)]{AndreaPhD} (then everything goes under sqrt), could be  $\sum1/(\malpha!)^2\le \min\{N^{2j}/(j!)^2,N^{j}/j!\}$, any gain? 
% Any better bound?
% No idea how sharp, check with Matlab? Maybe not.
% % Factorial at denominator looks nice.
% \\
% --------- Now I've corrected (i), need to double check.
%}

\begin{lemma}\label{lem:Taylor}
Let $\Upsilon\subset\IR^N$, $2\le N\in\IN$, be an open bounded set, with diameter $h$, star-shaped with respect to the ball $B:=B_{\rho h}(\by)$ centred at $\by\in\Upsilon$ and with radius $\rho h$, $0<\rho\le 1/2$.

Let $\calL$ be a linear differential operator with constant coefficients defined on $\IR^{N'}$-valued  fields in $\Upsilon$ and with values in $\IR^{N''}$, with $N',N''\in\IN$.
Assume that $\calL$ is homogeneous, in the sense that all its terms have degree $d\in\IN$:
$$
(\calL\bu)_{i''}=
% \sum_{\substack{i'=1,\ldots,N',\\\malpha\in\IN_0^N,\; |\malpha|=d }}
\sum_{i'=1,\ldots,N'}\sum_{\malpha\in\IN_0^N,\; |\malpha|=d }
a_{\malpha,i',i''}D^\malpha u_{i'}
\qquad i''=1,\ldots,N'',\quad \bu\in C^d(\Upsilon)^{N'}.
$$
Let $\bu\in L^2(\Upsilon)^{N'}$ be a (distributional) solution of $\calL\bu=\bzero$.
Fix $m\in \IN$. % with $m\ge d$.
\begin{enumerate}[(i)]
\item \label{a1}
If $\bu\in C^{m-1}(\Upsilon)^{N'}$, then the (vector-valued) Taylor polynomial $T^m_\by[\bu]$ %\cite[(4.4.1)]{BRS94} 
of order $m$ (and polynomial degree at most $m-1$) centred at $\by$ 
\[
T^m_\by[\bu](\bx):=\sum_{|\malpha|< m}\frac1{\malpha!}D^\malpha \bu(\by)(\bx-\by)^\malpha
\]
satisfies $\calL (T^m_\by[\bu])=\bzero$.
\item \label{a2}
If $\bu\in H^{m-1}(\Upsilon)^{N'}$, then the averaged Taylor polynomial 
\begin{align*}
Q^m[\bu](\bx)
:=\frac1{|B|}\int_B T^m_\by[\bu](\bx)\di \by
%=\frac1{|B|}\int_{B}\sum_{|\malpha|< m}\frac1{\malpha!}D^\malpha \bu(\by)(\bx-\by)^\malpha\di \by
\end{align*}
is a polynomial of degree at most $(m-1)$ and satisfies $\calL (Q^m[\bu])=\bzero$.
\item \label{a3}
If $\bu\in H^m(\Upsilon)^{N'}$, then the averaged Taylor polynomial $Q^m[\bu]$ satisfies the approximation estimate
\begin{align*}
\abs{\bu- Q^{m}[\bu]}_{H^j(\Upsilon)^{N'}}\leq
2\binom{N+j-1}{N-1}%(1+j)^{N-1}
\frac{N^{m-j}}{(m-j-1)!} 
\frac{h^{m-j}}{\rho^{N/2}}\abs{\bu}_{H^m(\Upsilon)^{N'}}
\qquad 0\le j\le m-1.
\end{align*}
%%% Factor  $\sqrt{N'}$ in front? NO!!!
\end{enumerate}
\end{lemma}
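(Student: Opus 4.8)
The plan is to prove the three parts in sequence, using linearity of $\calL$ and commutation properties of differentiation.

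For part \eqref{a1}, the key observation is that differentiation commutes with $\calL$: since $\calL$ has constant coefficients, for any multi-index $\mgamma$ we have $\calL(D^\mgamma\bu)=D^\mgamma(\calL\bu)=\bzero$. The Taylor polynomial of order $m$ can be split as $T^m_\by[\bu]=\bu-R^m_\by[\bu]$ where $R^m_\by[\bu]$ is the Taylor remainder, but a cleaner route is direct: I would apply $\calL$ to $T^m_\by[\bu]$ termwise. Since $\calL$ is homogeneous of degree $d$, applying it to the monomial $(\bx-\by)^\malpha$ kills everything with $|\malpha|<d$ and, for $|\malpha|\ge d$, produces a polynomial of degree $|\malpha|-d$ whose coefficients are linear combinations of the $D^\mgamma\bu(\by)$ with $|\mgamma|=|\malpha|$ (after distributing the $d$ derivatives). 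Matching: $\calL(T^m_\by[\bu])$ is precisely the degree-$(m-1-d)$ Taylor polynomial of $\calL\bu$ centred at $\by$, i.e. $T^{m-d}_\by[\calL\bu]$, which vanishes because $\calL\bu=\bzero$. So the core identity to establish is $\calL(T^m_\by[\bu])=T^{m-d}_\by[\calL\bu]$, a purely algebraic fact about truncated Taylor expansions and constant-coefficient operators; it needs $\bu\in C^{m-1}$ so that all derivatives up to order $m-1$ at $\by$ are defined.

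For part \eqref{a2}, since $Q^m[\bu](\bx)=\frac1{|B|}\int_B T^m_\by[\bu](\bx)\di\by$ is an average over $\by\in B$ of polynomials (in $\bx$) of degree $\le m-1$, it is itself a polynomial of degree $\le m-1$. To see $\calL(Q^m[\bu])=\bzero$: the operator $\calL$ acts on the $\bx$-variable while the average is over $\by$, so $\calL$ passes under the integral sign, giving $\calL(Q^m[\bu])=\frac1{|B|}\int_B\calL(T^m_\by[\bu])\di\by$. Here one must be careful that part \eqref{a1} requires pointwise smoothness, whereas now only $\bu\in H^{m-1}$ is assumed; the standard fix, exactly as in \cite{BRS94,Dur83}, is to note that the averaged Taylor polynomial's coefficients are integrals of $D^\malpha\bu$ over $B$ (via integration by parts in the Sobolev-duality sense), so $Q^m[\bu]$ is well-defined for $\bu\in H^{m-1}$, and then use a density argument: approximate $\bu$ in $H^{m-1}(\Upsilon)^{N'}$ by smooth $\bu_k$ with $\calL\bu_k\to\bzero$ — or more simply, observe that the map $\bu\mapsto\calL(Q^m[\bu])$ is continuous from $H^{m-1}$ into the polynomials (finite-dimensional, any norm) and vanishes on the dense subset $C^\infty\cap\{\calL\bu=\bzero\}$... except that subset may not be dense. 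The honest route: $\calL(Q^m[\bu])=Q^{m-d}[\calL\bu]$ by the same algebra as in \eqref{a1} applied under the integral (valid for $H^{m-1}$ data since only derivatives up to order $m-1$ enter the averaged-Taylor coefficients), and $Q^{m-d}[\calL\bu]=Q^{m-d}[\bzero]=\bzero$. I would phrase it this way to avoid density subtleties.

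For part \eqref{a3}, this is now purely the Bramble--Hilbert estimate for the averaged Taylor polynomial; $\calL$ plays no role. I would cite the standard polynomial-approximation estimate for $Q^m$ — e.g. the Dupont--Scott / Bramble--Hilbert bound as in \cite{BRS94} or \cite{Dur83} — applied componentwise to the $N'$ scalar components of $\bu$, on the domain $\Upsilon\subset\IR^N$ which is star-shaped with respect to the ball $B_{\rho h}(\by)$. The stated constant $2\binom{N+j-1}{N-1}\frac{N^{m-j}}{(m-j-1)!}\frac{h^{m-j}}{\rho^{N/2}}$ is the explicit form of that bound (the binomial counts the number of multi-indices of length $j$ in $N$ variables, the factorial and power of $N$ come from the remainder estimate, and $\rho^{-N/2}$ from $|B|^{-1/2}$); I would either reproduce the short computation tracking these constants or simply reference the precise statement in the literature and note that summing the squared component estimates gives the vector-valued bound with the same constant. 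The main obstacle is \emph{bookkeeping}: making the algebraic identity $\calL(T^m_\by[\bu])=T^{m-d}_\by[\calL\bu]$ precise (indices, which derivatives land where) and tracking the explicit constant in \eqref{a3}; neither is conceptually hard, but both require care, and the honest handling of the regularity gap in \eqref{a2} (pointwise Taylor needs $C^{m-1}$, but we only assume $H^{m-1}$) is the one place where a density or reformulation argument must be invoked cleanly.
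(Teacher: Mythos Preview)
Your proposal is correct and follows essentially the same approach as the paper: parts \eqref{a1} and \eqref{a2} are obtained from the commutation identities $D^\mbeta T^m_\by[\bu]=T^{m-|\mbeta|}_\by[D^\mbeta\bu]$ and $D^\mbeta Q^m[\bu]=Q^{m-|\mbeta|}[D^\mbeta\bu]$ (which, summed over the terms of $\calL$, give your $\calL(T^m_\by[\bu])=T^{m-d}_\by[\calL\bu]$ and its averaged analogue), and part \eqref{a3} is indeed just the componentwise Bramble--Hilbert bound of \cite{Dur83}. Your extended discussion of the regularity gap in \eqref{a2} is more cautious than the paper's treatment, but you land on the same argument.
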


\begin{proof}
\eqref{a1} and \eqref{a2} follow from the identities $D^\mbeta T^m_\by[\bu]=T^{m-|\mbeta|}_\by[D^\mbeta\bu]$ and 
$D^\mbeta Q^m[\bu]=Q^{m-|\mbeta|}[D^\mbeta\bu]$, for $\mbeta\in\IN_0^N$, 
$|\mbeta|<m$, see e.g.\ \cite[eq.~(3.5)]{AndreaPhD}.
%\eqref{a3} follows from the main assertion of \cite{Dur83}, applied
%componentwise and simplified using \cite[eq.~(B.9)]%{AndreaPhD} (see
%also \cite[Theorem~3.2.2]{AndreaPhD}).
For \eqref{a3}, applying the main assertion of \cite{Dur83}
  componentwise gives, for all $0\le j\le m-1$,
\begin{align*}
\abs{\bu- Q^{m}[\bu]}_{H^j(\Upsilon)^{N'}}\leq
2\binom{N+j-1}{N-1}(m-j)\Big(\sum_{|\malpha|=m-j}(\malpha!)^{-2}\Big)^{1/2}
\frac{h^{m-j}}{\rho^{N/2}}\abs{\bu}_{H^m(\Upsilon)^{N'}}.
%\qquad 0\le j\le m-1.
\end{align*}
Bounding the sum of the factorials as in \cite[(B.9)]{AndreaPhD} gives 
the bound in \eqref{a3}.
\end{proof}

% For $\malpha\in\IN_0^n$, $\alpha_t\in\IN_0$, we define the anisotropic partial derivative 
% \begin{align*}%\label{eq:Dc}
% D^{\malpha,\alpha_t}_c=c^{-\alpha_t} D^{\malpha,\alpha_t} 
% :=c^{-\alpha_t}\frac{\partial^{|\malpha|+\alpha_t}}
% {\partial^{\alpha_1}_{x_1}\cdots\partial^{\alpha_n}_{x_n}\partial^{\alpha_t}_{t}}.
% \end{align*}
% We use this to 
We introduce an anisotropic Sobolev seminorm in an open $K\subset Q$: for 
$(v,\bsigma)\in H^j(K)^{1+n}$, $j\in \IN_0$,
\begin{align}
% \nonumber
\abs{(v,\bsigma)}_{H^j_c(K)}^2:
% =&
% \sum_{\substack{\malpha\in\IN_0^n, \alpha_t\in\IN_0\\|\malpha|+\alpha_t=j}}
% \bigg(\N{c^{-1/2}D^{\malpha,\alpha_t}_c v}_{L^2(K)}^2
% +\N{c^{1/2}D^{\malpha,\alpha_t}_c \bsigma}_{L^2(K)^n}^2\bigg),
% \\
=&\sum_{\substack{\malpha\in\IN_0^n, \alpha_t\in\IN_0\\|\malpha|+\alpha_t=j}}
\bigg(\N{c^{-1/2-\alpha_t}D^{\malpha,\alpha_t} v}_{L^2(K)}^2
+\N{c^{1/2-\alpha_t}D^{\malpha,\alpha_t} \bsigma}_{L^2(K)^n}^2\bigg).
\label{eq:HjcNorm}
% \\ &\hspace{60mm}(v,\bsigma)\in H^j(K)^{1+n}, j\in \IN_0. \nonumber
\end{align}
In particular,
$\N{(v,\bsigma)}_{L^2_c(K)}^2:=\abs{(v,\bsigma)}_{H^0_c(K)}^2
=\N{c^{-1/2}v}_{L^2(K)}^2+\N{c^{1/2}\bsigma}_{L^2(K)^n}^2$.

We will make the following assumption on the shape of the mesh elements.
\begin{assum}\label{as:StarShaped}
Let $K\subset \IR^{n+1}$ be an open, bounded, Lipschitz set.
We denote its  ``an\-i\-so\-trop\-ic diameter'' by
$$h_K:=\sup_{(\bx,t),(\by,s)\in K}\Big\{\big(%(\bx-\by)^2
\abs{\bx-\by}^2
+c^2(t-s)^2\big)^{1/2}\Big\},$$
and we assume that $K$ is star-shaped with respect to the ellipsoid 
$$\big\{(\bx,t)\text{ such that\ }|\bx-\bx_K|^2+c^2(t-t_K)^2<\rho^2h_K^2\big\},$$ 
for some $(\bx_K,t_K)\in K$ and $0<\rho\le 1/2$.
\end{assum}
As a simple example, if an element $K\in\calT_h$ is Cartesian product of $n$ (space) interval of length $h_K^x$ and one (time) interval of length $h_K^x/c$, 
then it satisfies Assumption~\ref{as:StarShaped} with $h_K=\sqrt{n+1}\,h_K^x$ and $\rho=1/(2\sqrt{n+1})$.
% \am{The elements of a space--time mesh constructed from a $n$-simplicial mesh in space by a tent-pitching algorithm, and gluing together all the $(n+1)$-simplices sharing a ``tent-pole'', satisfy Assumption~\ref{as:StarShaped}.}\amnote{Sure? \ip{It is not clear here what a tent-pitching algorithm is, I would not mention it.}}

\begin{cor}\label{cor:ApproxTp}
For $p,s\in\IN_0$ and for constant $c>0$, let $(v,\bsigma)\in \bT(K)\cap H^{s+1}(K)^{1+n}$ for a space--time domain $K$ as in Assumption~\ref{as:StarShaped}.
Then, there exists $(w\hp,\btau\hp)\in \IT^p(K)$ such that, for all $0\le j\le m:=\min\{p,s\}$,
\begin{align*}
%\inf_{(w,\btau)\in \IT^p(K)} 
\abs{(v-w\hp,\bsigma-\btau\hp)}_{H^j_c(K)}\leq
2 \binom{n+j}n%(1+j)^{n}
\frac{(n+1)^{m+1-j}}{(m-j)!} 
\frac{h_K^{m+1-j}}{\rho^{(n+1)/2}}\abs{(v,\bsigma)}_{H^{m+1}_c(K)}
.%\qquad 0\le j\le m-1.
\end{align*}
\end{cor}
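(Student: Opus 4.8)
The plan is to apply Lemma~\ref{lem:Taylor} with $N=n+1$ and the Trefftz differential operator, then rescale the resulting estimate to the anisotropic norm $\abs{\cdot}_{H^j_c(K)}$. First I would pass to reference coordinates: set $(\bx,t)=(h_K\widehat{\bx},h_Kc^{-1}\widehat{t})+(\bx_K,t_K)$ and, as in \S\ref{s:TpBasis}, transform the pair $(v,\bsigma)$ into $(\widehat v,\widehat{\bsigma})$ with $\widehat v(\widehat{\bx},\widehat{t})=c^{-1}v(\bx,t)$, $\widehat{\bsigma}(\widehat{\bx},\widehat{t})=\bsigma(\bx,t)$. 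Under this map the Trefftz property with wave speed $c$ becomes the Trefftz property with wave speed $1$, the image $\widehat K$ of $K$ has (isotropic) diameter $1$, and by Assumption~\ref{as:StarShaped} it is star-shaped with respect to the Euclidean ball of radius $\rho$ centred at the image of $(\bx_K,t_K)$. The key point is that the change of variables is affine with the \emph{same} scaling $h_K$ in all four space--time directions once the time axis has been stretched by $c$, so the Jacobian is a constant multiple of the identity and Sobolev seminorms transform by clean powers of $h_K$; moreover the weighted combination in \eqref{eq:HjcNorm} is exactly the one that is invariant (up to the global factor $h_K^{(n+1)/2}$) under this transformation.

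Next I would invoke Lemma~\ref{lem:Taylor}\eqref{a3} applied to the operator $\calL$ encoding the Trefftz property, which is linear, homogeneous of degree $d=1$, acts on $\IR^{n+1}$-valued fields, and whose kernel contains $(\widehat v,\widehat{\bsigma})$. The lemma produces the averaged Taylor polynomial $Q^{m+1}[(\widehat v,\widehat{\bsigma})]$ of degree at most $m$, which by part~\eqref{a2} is again Trefftz (with speed $1$), and which satisfies, for $0\le j\le m$,
\[
\abs{(\widehat v,\widehat{\bsigma})-Q^{m+1}[(\widehat v,\widehat{\bsigma})]}_{H^j(\widehat K)^{1+n}}
\le 2\binom{N+j-1}{N-1}\frac{N^{\,m+1-j}}{(m-j)!}\,\rho^{-N/2}\,\abs{(\widehat v,\widehat{\bsigma})}_{H^{m+1}(\widehat K)^{1+n}},
\]
with $N=n+1$, i.e.\ $\binom{N+j-1}{N-1}=\binom{n+j}{n}$ and the diameter $h=1$. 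Note $m=\min\{p,s\}\le p$ guarantees the polynomial degree does not exceed $p$, and $(v,\bsigma)\in H^{s+1}(K)^{1+n}$ guarantees $H^{m+1}$ regularity, so the hypotheses of the lemma are met. Transforming $Q^{m+1}[(\widehat v,\widehat{\bsigma})]$ back to physical coordinates via the inverse of the map above yields an element $(w\hp,\btau\hp)\in\IT^p(K)$, since the back-transform preserves both the Trefftz property with speed $c$ and the polynomial degree.

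Finally I would undo the scaling in the estimate. Writing out $\abs{(v-w\hp,\bsigma-\btau\hp)}_{H^j_c(K)}$ in terms of the reference quantities: each summand $\N{c^{-1/2-\alpha_t}D^{\malpha,\alpha_t}(v-w\hp)}_{L^2(K)}$ equals $h_K^{-j}\cdot h_K^{(n+1)/2}\N{D^{\widehat\malpha,\alpha_t}(\widehat v-\widehat w\hp)}_{L^2(\widehat K)}$ after accounting for the $j$ derivatives (each contributing $h_K^{-1}$ once the time derivative is paired with a $c^{-1}$), the volume factor from $\di V=c^{-1}h_K^{n+1}\di\widehat V$, and the $c$ weights absorbed into $\widehat v=c^{-1}v$, $\widehat{\bsigma}=\bsigma$; the same bookkeeping applies to the $\bsigma$-terms and to the right-hand side seminorm $\abs{(v,\bsigma)}_{H^{m+1}_c(K)}$. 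The common factor $h_K^{(n+1)/2}$ cancels between the two sides, leaving exactly the advertised power $h_K^{\,m+1-j}$ and the constant $2\binom{n+j}{n}\frac{(n+1)^{m+1-j}}{(m-j)!}\rho^{-(n+1)/2}$. The main obstacle — really the only place where care is needed — is getting this last bookkeeping exactly right: one must check that the anisotropic weights $c^{\pm1/2-\alpha_t}$ in \eqref{eq:HjcNorm} are precisely the ones rendered dimensionless by the map $(v,\bsigma)\mapsto(c^{-1}v,\bsigma)$ combined with the $t\mapsto c^{-1}\widehat t$ stretch, so that no stray powers of $c$ survive; everything else is a direct substitution into Lemma~\ref{lem:Taylor}.
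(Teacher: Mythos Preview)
Your proposal is correct and follows essentially the same approach as the paper: rescale time to unit wave speed, apply Lemma~\ref{lem:Taylor} to the first-order Trefftz operator, and scale back using the exact identity between $\abs{\cdot}_{H^j_c(K)}$ and the isotropic $H^j$ seminorm on the rescaled domain. The only cosmetic difference is that you additionally rescale by $h_K^{-1}$ to a unit-diameter reference element and then track the resulting powers of $h_K$ by hand, whereas the paper scales only in time (so the transformed domain still has diameter $h_K$) and lets the $h^{m+1-j}$ factor come straight out of Lemma~\ref{lem:Taylor}\eqref{a3}; this saves you the bookkeeping paragraph at the end but is not a different argument.
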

\begin{proof}
The scaled fields $\widetilde v(\bx,s)=c^{-1}v(\bx,s/c)$, 
$\widetilde \bsigma(\bx,s)=\bsigma(\bx,s/c)$ satisfy the wave equations with unit speed
$\nabla \widetilde v + \der{\widetilde \bsigma}s=0$, 
$\nabla \cdot\widetilde \bsigma + \der{\widetilde v}s=0$ in $\widetilde K=\{(\bx,s)$ s.t. $(\bx, s/c)\in K\}$, which is star-shaped with respect to a ball with radius $\rho h_K$.
By Lemma~\ref{lem:Taylor}, with $N=N'=N''=n+1$,
\begin{align*}
\abs{(\widetilde v,\widetilde \bsigma)-Q^{p+1}[(\widetilde v,\widetilde \bsigma)]}_{H^j(\widetilde K)^{1+n}}
\leq 2 \binom{n+j}n%(1+j)^{n}
\frac{(n+1)^{m+1-j}}{(m-j)!} 
\frac{h_K^{m+1-j}}{\rho^{(n+1)/2}}\abs{(\widetilde v,\widetilde\bsigma)}_{H^{m+1}(\widetilde K)^{1+n}}
.%\qquad 0\le j\le m-1.
\end{align*}
From \eqref{eq:HjcNorm} and a simple scaling, we have
$\abs{(v,\bsigma)}_{H^{m+1}_c(K)}
=\abs{(\widetilde v,\widetilde\bsigma)}_{H^{m+1}(\widetilde K)^{1+n}}$, 
and similarly for the norm at the left-hand side, from which the assertion follows.
\end{proof}

% \am{DROP
% \begin{cor}\label{cor:ApproxTp}
% For $m\in\IN$, $p\in\IN_0$, $m\le p+1$, and for constant $c>0$, let $(v,\bsigma)\in \bT(K)\cap H^m(K)^{1+n}$ for a space--time domain $K$ as in Assumption~\ref{as:StarShaped}.
% Then, there exists $(w\hp,\btau\hp)\in \IT^p(K)$ such that, for all $0\le j\le m-1\le p$,
% \begin{align*}
% %\inf_{(w,\btau)\in \IT^p(K)} 
% \abs{(v-w\hp,\bsigma-\btau\hp)}_{H^j_c(K)}\leq
% 2(1+j)^{n}\frac{(n+1)^{m-j}}{(m-j-1)!} 
% \frac{h_K^{m-j}}{\rho^{(n+1)/2}}\abs{(v,\bsigma)}_{H^m_c(K)}
% .%\qquad 0\le j\le m-1.
% \end{align*}
% \end{cor}
% \begin{proof}
% The scaled fields $\widetilde v(\bx,s)=c^{-1}v(\bx,s/c)$, 
% $\widetilde \bsigma(\bx,s)=\bsigma(\bx,s/c)$ satisfy the wave equation with unit speed
% $\nabla \widetilde v + \der{\widetilde \bsigma}s=0$, 
% $\nabla \cdot\widetilde \bsigma + \der{\widetilde v}s=0$ in $\widetilde K=\{(\bx,s)$ s.t. $(\bx, s/c)\in K\}$, which is star-shaped with respect to a ball with radius $\rho h_K$.
% By Lemma~\ref{lem:Taylor}, with $N=N'=N''=n+1$,
% \begin{align*}
% \abs{(\widetilde v,\widetilde \bsigma)-Q^{p+1}[(\widetilde v,\widetilde \bsigma)]}_{H^j(\widetilde K)^{1+n}}
% \leq 2(1+j)^{n}\frac{(n+1)^{m-j}}{(m-j-1)!} 
% \frac{h_K^{m-j}}{\rho^{(n+1)/2}}\abs{(\widetilde v,\widetilde\bsigma)}_{H^m(\widetilde K)^{1+n}}
% .%\qquad 0\le j\le m-1.
% \end{align*}
% From \eqref{eq:HjcNorm} and a simple scaling, we have
% $\abs{(v,\bsigma)}_{H^m_c(K)}^2
% =\abs{(\widetilde v,\widetilde\bsigma)}_{H^m(\widetilde K)^{1+n}}^2$, 
% and similarly for the norm at the left-hand side, so the assertion follows.
% \end{proof}
% }

\begin{lemma}\label{lem:Trace}
% \amnote{Trace stuff. Following proof of Lemma 6.1 in IMA, I get
% $$\N{u}_{L^2(\deK_{top}\cup\deK_{bot})}^2
% \le% 2\N{u}_{L^2(K)}\N{\der ut}_{L^2(K)}+ 
% \int_K\frac{2u^2(\bx,t)}{\sup\{t^*,(\bx,t^*)\in K\}-\inf\{t^*,(\bx,t^*)\in K\}}
% + 2u(\bx,t)\der ut(\bx,t)
% \di \bx \di t,
% $$
% using parametrisations of lower/upper sides of $K$.
% But on tents the first term is not controlled by L2 norm of u because denominator is zero when inflow touches outflow!
% Same as Jay--Monk--Sepulveda.
% Is it controlled by H1?
% Maybe even by graph space!
% }
Let $\Upsilon\subset\IR^N$ be as in Lemma~\ref{lem:Taylor} and $u\in H^1(\Upsilon)$.
Then, for all $a>0$, we have the following trace estimate:
\begin{align}\label{eq:lem:Trace}
\N{u}_{L^2(\partial\Upsilon)}^2\le 
\frac{N+a}{\rho h}\N{u}^2_{L^2(\Upsilon)}
+\frac{h}{\rho a}\N{\nabla u}_{L^2(\Upsilon)^N}^2.
\end{align}
%optimal: $a=\frac{\sqrt{N^2+4}-N}2=\sqrt{(N/2)^2+1}-N/2<1$, then $N+a=1/a = \sqrt{(N/2)^2+1}+N/2$...
\end{lemma}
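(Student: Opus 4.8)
The plan is to obtain \eqref{eq:lem:Trace} from the divergence theorem applied to the radial field $\bx\mapsto\bx-\by$, after establishing a pointwise lower bound on its normal component along $\partial\Upsilon$ coming from the star-shapedness hypothesis. First I would recall that a bounded open set star-shaped with respect to an open ball is a Lipschitz domain, so that the outward unit normal $\bn$ exists almost everywhere on $\partial\Upsilon$, the trace operator $H^1(\Upsilon)\to L^2(\partial\Upsilon)$ is bounded, and the divergence theorem is available. The key geometric step is then the claim
\[
(\bx-\by)\cdot\bn(\bx)\ge\rho h\qquad\text{for a.e.\ }\bx\in\partial\Upsilon .
\]
To see this, fix a boundary point $\bx$ at which $\bn(\bx)$ exists; for every $\bz\in B=B_{\rho h}(\by)$ the segments joining points of $\Upsilon$ near $\bx$ to $\bz$ lie in $\Upsilon$, hence in the limit $[\bx,\bz]\subset\overline\Upsilon$, and since $\overline\Upsilon$ lies locally on the inner side of the tangent hyperplane at $\bx$ this forces $(\bz-\bx)\cdot\bn(\bx)\le0$. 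Taking the supremum over $\bz\in B$ gives $(\bx-\by)\cdot\bn(\bx)\ge\sup_{\bz\in B}(\bz-\by)\cdot\bn(\bx)=\rho h$.

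Next, assuming first $u\in C^\infty(\overline\Upsilon)$, I would combine this bound with the divergence theorem:
\[
\N{u}^2_{L^2(\partial\Upsilon)}\le\frac1{\rho h}\int_{\partial\Upsilon}u^2\,(\bx-\by)\cdot\bn\di S
=\frac1{\rho h}\int_\Upsilon\nabla\cdot\big(u^2(\bx-\by)\big)\di\bx
=\frac1{\rho h}\int_\Upsilon\big(N u^2+2u\,\nabla u\cdot(\bx-\by)\big)\di\bx .
\]
Since $\by\in\Upsilon$ and $\diam\Upsilon=h$ we have $\abs{\bx-\by}\le h$ on $\Upsilon$, so for any $a>0$ the weighted Young inequality yields $2u\,\nabla u\cdot(\bx-\by)\le a u^2+\frac{h^2}{a}\abs{\nabla u}^2$ pointwise, and integrating gives exactly
\[
\N{u}^2_{L^2(\partial\Upsilon)}\le\frac{N+a}{\rho h}\N{u}^2_{L^2(\Upsilon)}+\frac{h}{\rho a}\N{\nabla u}^2_{L^2(\Upsilon)^N}.
\]
The general case $u\in H^1(\Upsilon)$ then follows by density of $C^\infty(\overline\Upsilon)$ in $H^1(\Upsilon)$ (valid on the bounded Lipschitz domain $\Upsilon$) and continuity of the trace map, since both sides are continuous with respect to the $H^1(\Upsilon)$ norm.

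The only genuinely nontrivial point is the first one: checking that star-shapedness with respect to a ball provides enough boundary regularity for the normal, the trace and the divergence theorem to be meaningful, and proving the pointwise normal inequality $(\bx-\by)\cdot\bn\ge\rho h$. Everything after that is a one-line application of Gauss's theorem and Young's inequality. This is the same scheme underlying the interpolation trace estimates of \cite{HMP13} invoked in the proof of Proposition~\ref{prop:TimeFaceBounds}; here it is simply recorded for a general space dimension $N$, with the explicit constant made visible through the free parameter $a$.
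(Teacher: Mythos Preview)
Your argument is correct and follows the same route as the paper's proof: both use the star-shapedness bound $(\bx-\by)\cdot\bn\ge\rho h$, apply the divergence theorem to $u^2(\bx-\by)$, and finish with a Young/AM--GM inequality on the cross term. You supply more detail than the paper (justifying the Lipschitz regularity, proving the pointwise normal inequality, and adding the density step), but the underlying strategy is identical.
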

\begin{proof}
We slightly extend the proof of \cite[Lemma~4.4]{HMP13}.
We assume for simplicity that the centre of the ball $B$ is
$\by=\bzero$. Recalling that $\bx\cdot\bn\ge\rho h$ on
$\partial\Upsilon$, where $\bn$ is the outward unit normal vector
  to $\partial\Upsilon$, we have
\begin{align*}
\N{u}_{L^2(\partial\Upsilon)}^2
&\le\frac1{\rho h}\int_{\partial\Upsilon}(\bx\cdot\bn) u^2\di S
\\&=\frac1{\rho h}\int_\Upsilon \dive(\bx u^2)\di \bx
\\&=\frac1{\rho h}\int_\Upsilon \big(N u^2+2(\bx\cdot\nabla u)u\big)\di \bx
%\\&
\le\frac N{\rho h}\N{u}^2_{L^2(\Upsilon)}
+\frac2\rho\N{u}_{L^2(\Upsilon)}\N{\nabla u}_{L^2(\Upsilon)},
\end{align*}
and concluding by the arithmetic--geometric mean inequality.
\end{proof}
\begin{rem}
If $K$ is not star-shaped but can be decomposed in parts that are star-shaped with respect to some balls, a bound similar to \eqref{eq:lem:Trace} holds with at the denominator the radius of the smallest of those balls.
%Anyway, I believe that reasonable tent-pitched meshes are star-shaped with respect to a ``rugby ball'' around  the tent pole.
\end{rem}

For each $K\in\calT_h$ we denote the space-like and the time-like part of its boundary by 
$$
\deKspa:=\deK\cap(\Fspa\cup\FO\cup\FT),\qquad 
\deKtime:=\deK\cap(\Ftime\cup\FD\cup\FN\cup\FR)
%=\overline{\deK-\deKspa}
.$$
We introduce some coefficients: for each $K\in\calT_h$
% \amnote{The term on space-like faces is
% $(\frac1{1-\gamma}+\frac{1-\gamma}2)\frac1{\sqrt{1+\gamma^2}}$, which I bounded slightly brutally but I think that's enough.
% Could also be $\frac{\sqrt2}{1-\gamma^2}+1/2$, more accurate for $\gamma\sim1$.
% }
%\ipnote{
% I don't get the 2 at the denominators in the terms that
% I coloured in blue. On $\Ftime$, these terms come from the terms with the
%   averages in the $DG^+$ norm. The $1/2$ from the averages becomes
%   $1/4$ outside the integrals. The terms on $\FD$ and $\FN$ do not
%   have coefficients (I only get $1/c\alpha$ and $c/\beta$,
%   respectively). Then it seems to me that the two 2 at the
%   denominators have to be removed. Am I wrong?\\
% \am{%I agree, dropped the 2's.
% But now I believe there is a 2 in front of the other terms, in red, because jump squared needs to be bounded by (twice) the sum of the two traces. Correct??? \ip{Yes.}
% \\
% So I have to change bound $\ge3/2$ and remark! \ip{Everything looked all right with me, but if the extra coefficient 2 below is correct, we have to change them again.}\\
% If one chose  $\alpha=1/(\sqrt2c)$, $\beta=c/\sqrt2$ then can get $2\sqrt2$ instead of 3, but it seems pointless improvement.\\
% Could also improve by separating terms on $\Ftime$ and $\FD/\FN$. \ip{I would stick with what we have now...}
%   }}
\begin{align}
\xi_K^{\mathrm{time}}:=\max\Big\{&
\N{2c\alpha}_{L^\infty(\deK\cap(\Ftime\cup\FD))}
+\N{c/\beta}_{L^\infty(\deK\cap(\Ftime\cup\FN))},\nonumber\\
&\N{2\beta/c}_{L^\infty(\deK\cap(\Ftime\cup\FN))}
+\N{1/(c\alpha)}_{L^\infty(\deK\cap(\Ftime\cup\FD))},\nonumber\\
&\N{(1-\delta)\vartheta}_{L^\infty(\deK\cap(\FR))},\quad
\N{\delta/\vartheta}_{L^\infty(\deK\cap(\FR))}
\Big\}
%\ge 2\sqrt2
,\nonumber\\
\xi_K:=\max\Big\{&\xi_K^{\mathrm{time}},\quad
\N{n^t_K\big(2(1-\gamma)^{-1}+1\big)}_{L^\infty(\deKspa)}
\Big\}\ge 2\sqrt2.
\label{eq:xiK}
\end{align}
% term n^t_K\big(2(1-\gamma)^{-1}+1 in the last bracket can go down to almost zero if c<1, so does not give any lower bound stronger than 2\sqrt2, from  xi^time

\begin{rem}
If $\alpha=\beta^{-1}=c^{-1}$, $\delta=\vartheta^2/(1+\vartheta^2)$, then
$\xi_K^{\mathrm{time}}=3$ 
%$\xi_K^{\mathrm{time}}=3/2$ 
and $\xi_K$ only depends on the maximal slope of the space-like faces of $K$ and on $c$. 
If, moreover, all faces of $K$ are aligned to the space--time axes,
%\am{, or more in general $\gamma\le3/5$}, 
then $\xi_K=3$ as well.
\end{rem}

\begin{theorem}\label{thm:ConvergenceTp}
Assume there exists $0<\rho<1/2$ such that all mesh elements $K\in\calT_h$ satisfy Assumption~\ref{as:StarShaped}.
Let $(v,\bsigma)%\in \bT\Th
$ and $(\Vhp,\Shp)%\in \ITp\Th
$ be the solutions of the IBVP~\eqref{eq:IBVP} and of the Trefftz-DG formulation \eqref{eq:TDG} with $\bT\Th=\ITp\Th$, respectively.
For each element $K\in\calT_h$, assume local regularity $(v_{|_K},\bsigma_{|_K})\in H^{s_K+1}(K)^{1+n}$ for some $s_K\in\IN_0$ and define $m_K:=\min\{p_K,s_K\}$.
Then:
\begin{align}\nonumber
\Tnorm{(v-\Vhp,\bsigma-\Shp)}\DG
&\le (1+C_c)\rho^{-n/2-1}
\sum_{K\in\calT_h}\xi_K^{1/2} C_{(m_K,n)} h_K^{m_K+1/2}
\abs{(v,\bsigma)}_{H^{m_K+1}_c(K)},\\
\text{where }\quad&
C_{(m_K,n)}=
\begin{cases}
\frac{2\sqrt{n+3}(n+1)^{m_K+1}}{(m_K-1)!} 
%\frac{2^{n+2}(n+1)^{m_K}}{(m_K-1)!} 
& \text{if } m_K\ge1,\\
2\sqrt{n+2}(n+1)+1& \text{if } m_K=0,\\
\end{cases}
\label{eq:thm:ConvergenceTp}
\end{align}
$\xi_K$ as in~\eqref{eq:xiK}, and $C_c$ is the constant defined in~\eqref{eq:Continuity}.
\end{theorem}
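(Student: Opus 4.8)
The plan is to chain together the quasi-optimality bound \eqref{eq:QuasiOpt} of Theorem~\ref{th:QO}, the elementwise approximation estimates of Corollary~\ref{cor:ApproxTp}, and a scaled version of the trace inequality of Lemma~\ref{lem:Trace}. First I would apply \eqref{eq:QuasiOpt} with $\bVp\Th=\ITp\Th$, so that it suffices to exhibit a single competitor $(w\hp,\btau\hp)\in\ITp\Th$ and bound $\Tnorm{(v-w\hp,\bsigma-\btau\hp)}\DGp$. Since $\ITp\Th=\prod_{K\in\calT_h}\IT^{p_K}(K)$ carries no interelement continuity requirement, $(w\hp,\btau\hp)$ may be chosen separately on each element: take $(w\hp,\btau\hp)_{|_K}$ to be the scaled averaged Taylor polynomial provided by Corollary~\ref{cor:ApproxTp}, so that on every $K$ one has $\abs{(v-w\hp,\bsigma-\btau\hp)}_{H^j_c(K)}\le 2\binom{n+j}{n}\tfrac{(n+1)^{m_K+1-j}}{(m_K-j)!}\rho^{-(n+1)/2}h_K^{m_K+1-j}\abs{(v,\bsigma)}_{H^{m_K+1}_c(K)}$ for all $0\le j\le m_K$.

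The next step is to bound $\Tnorm{(v-w\hp,\bsigma-\btau\hp)}^2\DGp$ by a sum of elementwise boundary $L^2$-norms of the error. I would split the norm into contributions associated with the individual faces, and on each face control every jump and average term by the one-sided traces of the error from the adjacent element(s): on space-like faces $\jmp{w}_t=(w^--w^+)n^t_F$ gives $\big|\big(\tfrac{1-\gamma}{n^t_F}\big)^{1/2}c^{-1}\jmp{w}_t\big|\le (n^t_F)^{1/2}c^{-1}(|w^-|+|w^+|)$, while on time-like, Dirichlet, Neumann and Robin faces one uses $|\jmp{w}_\bN|\le|w^-|+|w^+|$, $|\mvl{w}|\le\tfrac12(|w^-|+|w^+|)$ and the single boundary trace; one also needs the extra $\DGp$-terms $2\big\|\big(\tfrac{n^t_F}{1-\gamma}\big)^{1/2}c^{-1}w^-\big\|^2$, $2\big\|\big(\tfrac{n^t_F}{1-\gamma}\big)^{1/2}\btau^-\big\|^2$ on $\Fspa$. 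Using $(a+b)^2\le 2a^2+2b^2$ and absorbing all the resulting weights --- the geometric factors $n^t_F$, $(1-\gamma)^{\pm1}$, the flux parameters $\alpha,\beta,\delta,\vartheta$, and the powers of $c$ (together with the Jacobian and normal factors of the anisotropic rescaling used below) needed to rewrite everything through the norm $\abs{\cdot}_{H^j_c(K)}$ --- into the constants $\xi_K$ of \eqref{eq:xiK}, this produces a bound of the form
\[
\Tnorm{(v-w\hp,\bsigma-\btau\hp)}^2\DGp\le\sum_{K\in\calT_h}\xi_K\Big(\N{c^{-1/2}(v-w\hp)}_{L^2(\deK)}^2+\N{c^{1/2}(\bsigma-\btau\hp)}_{L^2(\deK)^n}^2\Big),
\]
with $\deK=\deKspa\cup\deKtime$. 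Checking that the per-face weights are genuinely dominated by $\xi_K$ --- in particular that the $\deKspa$-weight of $c^{-2}w^2$ is at most $n^t_K\big(2(1-\gamma)^{-1}+1\big)$, obtained by adding the split-jump weight $(1-\gamma)n^t_F\le n^t_F$ to the trace weight $2n^t_F(1-\gamma)^{-1}$ --- is the most delicate bookkeeping in the proof.

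It then remains to estimate the boundary term on each $K$. For this I would pass to the unit-speed coordinates $(\bx,t)\mapsto(\bx,ct)$ used in the proof of Corollary~\ref{cor:ApproxTp}, under which $K$ is mapped to a set of Euclidean diameter $h_K$ that is star-shaped with respect to a ball of radius $\rho h_K$; applying \eqref{eq:lem:Trace} componentwise with $N=n+1$ and the parameter choice $a=1$ gives, for each $K$,
\[
\N{c^{-1/2}(v-w\hp)}_{L^2(\deK)}^2+\N{c^{1/2}(\bsigma-\btau\hp)}_{L^2(\deK)^n}^2
\le\tfrac{n+2}{\rho h_K}\abs{(v-w\hp,\bsigma-\btau\hp)}_{H^0_c(K)}^2
+\tfrac{h_K}{\rho}\abs{(v-w\hp,\bsigma-\btau\hp)}_{H^1_c(K)}^2 .
\]
Inserting Corollary~\ref{cor:ApproxTp} with $j=0$ and, when $m_K\ge1$, with $j=1$, both right-hand terms scale like $h_K^{2m_K+1}$; since the $j=0$ constant there is $1/m_K$ times the $j=1$ constant one may bound the first term by $(n+2)$ times the $j=1$ constant squared and combine into $(n+3)$ times it, and the factors $\rho^{-(n+1)}$ (from Corollary~\ref{cor:ApproxTp}) and $\rho^{-1}$ (from the trace estimate) combine into $\rho^{-(n+2)}$; the upshot is the elementwise bound with $C_{(m_K,n)}=\tfrac{2\sqrt{n+3}\,(n+1)^{m_K+1}}{(m_K-1)!}$. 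When $m_K=0$ the $j=1$ bound of Corollary~\ref{cor:ApproxTp} is unavailable, so $\abs{(v-w\hp,\bsigma-\btau\hp)}_{H^1_c(K)}$ is estimated by $\abs{(v,\bsigma)}_{H^1_c(K)}$ directly (the degree-zero averaged Taylor polynomial has vanishing first derivatives); keeping the $h_K^{1/2}$ factor from the trace estimate and combining with the $j=0$ bound yields the $\calO(h_K^{1/2})$ estimate with $C_{(0,n)}=2\sqrt{n+2}\,(n+1)+1$ (after completing the square).

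Finally I would collect the elementwise estimates into $\Tnorm{(v-w\hp,\bsigma-\btau\hp)}^2\DGp\le\rho^{-(n+2)}\sum_{K\in\calT_h}\xi_K\,C_{(m_K,n)}^2\,h_K^{2m_K+1}\,\abs{(v,\bsigma)}_{H^{m_K+1}_c(K)}^2$, take square roots, use $\big(\sum_K a_K^2\big)^{1/2}\le\sum_K a_K$, note $\rho^{-(n+2)/2}=\rho^{-n/2-1}$, and multiply by $1+C_c$ from \eqref{eq:QuasiOpt}; this gives \eqref{eq:thm:ConvergenceTp}. The analytic content is elementary --- Bramble--Hilbert approximation, already packaged in Corollary~\ref{cor:ApproxTp}, together with a scaled trace inequality --- so the genuine difficulty is purely in the careful tracking of the geometric, flux and scaling weights and of the universal constants so that $\xi_K$ and $C_{(m_K,n)}$ emerge exactly as stated, with the $m_K=0$ case requiring separate attention.
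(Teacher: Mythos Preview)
Your proposal is correct and follows essentially the same route as the paper's proof: quasi-optimality \eqref{eq:QuasiOpt}, an elementwise bound of $\Tnorm{\cdot}\DGp^2$ by $\sum_K\xi_K$ times boundary $L^2$ norms, passage to unit-speed coordinates, the trace inequality \eqref{eq:lem:Trace} with $a=1$, and insertion of Corollary~\ref{cor:ApproxTp} for $j=0,1$ (with the separate $m_K=0$ argument using that the degree-zero approximant is constant). The only cosmetic difference is that the paper first derives the abstract bound \eqref{eq:DG+<H1} for a generic $(w,\btau)\in H^1\Th^{1+n}$ and only afterwards specialises to the averaged Taylor polynomial, whereas you fix the competitor at the outset; also, the paper handles the $c$-scaling by writing the intermediate boundary norms with weights $c^{-1}$ and $1$ (not $c^{\pm1/2}$) and then tracking the face Jacobians $(|n_F^t|^2+c^2|\bn_F^x|^2)^{-1/2}$ explicitly under the change of variables, which is exactly the bookkeeping you allude to when absorbing the ``Jacobian and normal factors'' into $\xi_K$.
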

\begin{proof}
From the definition \eqref{eq:DGnorm} of the $\Tnorm{\cdot}\DGp$ norm, for all $(w,\btau)\in H^1\Th^{1+n}$,
\begin{align*}
&\N{(w,\btau)}\DGp^2\\
&\le \sum_{K\in\calT_h}
\N{2\frac{|n^t_K|}{1-\gamma}+(1-\gamma)|n^t_K|%}{2\ip{\text{(no?)}}}
}_{L^\infty(\deKspa)}
\Big(\N{c^{-1}w}^2_{L^2(\deKspa)}+\N{\btau}^2_{L^2(\deKspa)^{n}}\Big)\\
&\hspace{50mm}
+\xi_K^{\mathrm{time}} c 
\Big(\N{c^{-1}w}^2_{L^2(\deKtime)}+\N{\btau}^2_{L^2(\deKtime)^{n}}\Big),
\end{align*}
where we used e.g.\ that $\jmp{w}_t^2\le2(n^t_{K_1})^2(w_{|_{K_1}}^2+w_{|_{K_2}}^2)$ on $\deK_1\cap\deK_2$.
% \ipnote{Coefficient 2 in blue is because of the new definition of the $DG^+$ norms (see~\eqref{eq:DGnorm}).
% The denominator 2 in the following term should not be there because of what you wrote in red in the previous note. Correct?
% \am{OK.}
% If this is correct, we also have to change the definition of $\xi_K$ into
% \[
% \xi_K:=\max\Big\{\xi_K^{\mathrm{time}},\quad
% \N{2(1-\gamma)^{-1}+1}_{L^\infty(\deKspa)}
% \Big\}
% \] and check again the coefficients in red.
% }
We define the scaled fields $\widetilde w(\bx,s)=c^{-1}w(\bx,s/c)$, 
$\widetilde \btau(\bx,s)=\btau(\bx,s/c)$ in $\widetilde K=\{(\bx,s)$ s.t. $(\bx, s/c)\in K\}$.
Then the $L^2$ norms on a $n$-dimensional space--time face $F$ with unit normal $(\bn^\bx_F,n^t_F)$ scale as
$$
\N{w}^2_{L^2(F)} = c^2 \big(|n_F^t|^2+c^2|\bn_F^x|^2\big)^{-1/2} \N{\widetilde w}^2_{L^2(\widetilde F)},\;
\N{\btau}^2_{L^2(F)^n} 
= \big(|n_F^t|^2+c^2|\bn_F^x|^2\big)^{-1/2} \N{\widetilde \btau}^2_{L^2(\widetilde F)^n},
$$
where $\widetilde F=\{(\bx,s)$ s.t. $(\bx, s/c)\in F\}$.
Using this in the previous bound, recalling that $n^t_K=0$ on $\deKtime$ while  $c|\bn^x_F|=\gamma|n^t_F|$ and $\gamma\in[0,1)$ on $\deKspa$ by \eqref{eq:gamma}, we have
\begin{align}\nonumber
&\N{(w,\btau)}\DGp^2\\
&\le \sum_{K\in\calT_h}\xi_K
\Big(\N{\widetilde w}^2_{L^2(\partial\widetilde K)}
+\N{\widetilde \btau}^2_{L^2(\partial\widetilde K)^{n}}\Big)
\nonumber\\
&\overset{\eqref{eq:lem:Trace},a=1}\le 
\frac1\rho\sum_{K\in\calT_h}\xi_K
\bigg(\frac{n+2}{h_K}
\Big(\N{\widetilde w}^2_{L^2(\widetilde K)}+\N{\widetilde \btau}^2_{L^2(\widetilde K)^{n}}\Big)
+h_K
\Big(\abs{\widetilde w}^2_{H^1(\widetilde K)}+\abs{\widetilde \btau}^2_{H^1(\widetilde K)^{n}}\Big)
\bigg)
\nonumber\\
&=\frac1\rho\sum_{K\in\calT_h}\xi_K
\bigg(\frac{n+2}{h_K}\N{(w,\btau)}^2_{L^2_c(K)}+h_K \abs{(w,\btau)}^2_{H^1_c(K)}
\bigg).
\label{eq:DG+<H1}
\end{align}
If $m_K\ge1$ for all elements, we conclude by using the
quasi-optimality \eqref{eq:QuasiOpt} in Theorem~\ref{th:QO},
the bound~\eqref{eq:DG+<H1} with
$(w,\btau)=(v-w\hp,\bsigma-\btau\hp)$,
and the approximation bounds of Corollary~\ref{cor:ApproxTp} with $j=0,1$:
\begin{align*}
&\Tnorm{(v,\bsigma)-(\Vhp,\Shp)}\DG^2\\
&\le (1+C_c)^2 \frac{4(n+3)}{\rho^{n+2}} \sum_{K\in\calT_h}\xi_K  
\frac{(n+1)^{2m_K+2}}{((m_K-1)!)^2} h_K^{2m_K+1}\abs{(v,\bsigma)}_{H^{m_K+1}_c(K)}^2.
% \Tnorm{(v,\bsigma)-(\Vhp,\Shp)}\DG^2
% \le (1+C_c)^2 \frac{2^{2n+4}}{\rho^{n+2}} \sum_{K\in\calT_h}\xi_K  
% \frac{(n+1)^{2m_K}}{(m_K-1)!^2} h_K^{2m_K+1}\abs{(v,\bsigma)}_{H^{m_K+1}_c(K)}^2.
\end{align*}
% terms from L2 norm gives a factor 2+1 in case n= m_K=1....
The proof for $m_K=0$ follows in a similar way, using that %$|(w\hp,\btau\hp)|_{H^1_c(K)}=0$
$|(v-w\hp,\bsigma-\btau\hp)|_{H^1_c(K)}=|(v,\bsigma)|_{H^1_c(K)}$ 
for all $(w\hp,\btau\hp)\in\IT^0(K)$.
\end{proof}
The orders of convergence in the local meshwidth $h_K$ are optimal, $\Tnorm{\cdot}\DG$ \am{being} a trace norm.
Combining \eqref{eq:thm:ConvergenceTp} with the results of \S\ref{s:MeshIndependent}, one immediately obtains orders of convergence in mesh-independent norms.

\begin{rem}\label{rem:SimpleBoundTp}
If $\alpha=\beta^{-1}=c^{-1}$, $\delta=1/2$, $\vartheta=1$, 
the space-like faces are perpendicular to the time axis (i.e.\ $\gamma=0$),
% $\gamma\le 3/5$ (which is a constraint on the slope of space-like faces), 
$m_K\ge1$, and either $n=2$ or $n=3$, then one easily obtain $\xi_K\le3$, $C_c=2$ and $C_{(m_k,n)}\le 4 \sqrt6(n+1)^{m_K+1/2}/(m_K-1)!$, thus bound \eqref{eq:thm:ConvergenceTp} allows the simpler expression
\begin{align*}
\Tnorm{(v-\Vhp,\bsigma-\Shp)}\DG
&\le 36\sqrt2\rho^{-n/2-1}\sum_{K\in\calT_h} \frac{\big((n+1)h_K\big)^{m_K+1/2}}{(m_K-1)!}
\abs{(v,\bsigma)}_{H^{m_K+1}_c(K)}.
\end{align*}
\end{rem}
% \begin{rem}\label{rem:SimpleBoundTp}
% If $\alpha=\beta^{-1}=c^{-1}$, $\delta=1/2$, $\vartheta=1$, $\gamma\le 3/5$ (which is a constraint on the slope of space-like faces), $m_K\ge1$ and either $n=2$ or $n=3$, then one easily obtain $\xi_K\le3$, $C_c=2$ and $C_{(m_k,n)}\le 4 \sqrt6(n+1)^{m_K+1/2}/(m_K-1)!$, thus bound \eqref{eq:thm:ConvergenceTp} allows the simpler expression
% \begin{align*}
% \Tnorm{(v-\Vhp,\bsigma-\Shp)}\DG
% &\le 36\sqrt2\rho^{-n/2-1}\sum_{K\in\calT_h} \frac{\big((n+1)h_K\big)^{m_K+1/2}}{(m_K-1)!}
% \abs{(v,\bsigma)}_{H^{m_K+1}_c(K)}.
% \end{align*}
% \end{rem}

\begin{rem}
\am{Error bounds and orders of convergence in mesh-independent norms can be obtained by combining Theorem~\ref{thm:ConvergenceTp} with the results of \S\ref{s:MeshIndependent}, under suitable assumptions on the mesh and the coefficients.
For example, under the assumptions of both Proposition~\ref{prop:TimeFaceBounds} and Theorem~\ref{thm:ConvergenceTp}, if the mesh is partitioned in uniform time slabs (i.e.\ $\forall K\in\calT_h$, $K=K_0\times((j-1)T/N,jT/N)$ for $j=1,\ldots,N$) and $(v,\bsigma)\in H^{m+1}(Q)^{1+n}$, $p_K\ge m$, then 
\begin{align*}
&\N{v-v\hp}_{H^{-1}(0,T;L^2\OO)}+\N{\bsigma-\bsigma\hp}_{L^2(0,T;H^{-1}\OO^3)}
%\\&\qquad
\le  C \,h_\calT^{m}\, |(v,\bsigma)|_{H^{m+1}_c(Q)},
\end{align*}
where $C$ only depends on $n,\Omega,T,c,\rho,\tta_*,\ttb_*,m$.
}
\end{rem}

The error bounds obtained in Theorem~\ref{thm:ConvergenceTp} coincide with those one might expect for a non-Trefftz space--time DG method using $\IP^{p_K}(K)^{1+n}$ as local discrete space.
However, the dimension of $\IT^{p_K}(K)$ is smaller than that of $\IP^{p_K}(K)^{1+n}$ by a factor $p_K/(n+1)+1$ (see \eqref{eq:dimTp}), leading to better accuracy per degree of freedom for the Trefftz scheme, in particular for large polynomial degrees.
The same was observed for the $p$ convergence of the method for $n=1$ in \cite[\S7.3]{SpaceTimeTDG}, both in terms of error bounds and from numerical experiments.
% typical of Trefftz..., see e.g.\ \cite{TrefftzSurvey}......,  any good citation?

\subsection{The case of a first-order IBVP derived from a second-order problem: the polynomial Trefftz space \texorpdfstring{$\IWp(K)$}{Wp(K)}}
\label{s:Sp}

If the IBVP~\eqref{eq:IBVP} to be discretised is known to follow from a second-order problem, in particular the initial condition $\bsigma_0$ is a gradient, 
the slightly smaller Trefftz polynomial space $\IWp\Th$ defined in \eqref{eq:ST} can be used.
This space admits a simple basis
%, related to those used e.g.\ in \cite{WTF14,EKSTWtransparent},
and enjoys the same order of convergence of $\ITp\Th$.

\subsubsection{The polynomial Trefftz space \texorpdfstring{$\IU^p(K)$}{Up(K)} for the second-order wave equation and its bases}
\label{s:Up}

We first define the space of Trefftz polynomials for the second-order wave equation: on an open $K\subset\IR^{n+1}$, with $p\in\IN_0$ and constant $c>0$,
\begin{align}\label{eq:UpK}
% \IT^p_{\mathrm{2nd}}(K)
\IU^p(K)
:=\Big\{U\in \IP^p(K),\; -\Delta U+\frac1{c^2}\dersec{\am U} t=0\Big\}.
\end{align}
The dimension of \eqref{eq:UpK} is easily computed: the transformation $U(\bx,t)\mapsto U(\bx,t/\ri c)$ maps polynomial solutions of the wave equation to harmonic polynomials and is invertible
(if we consider complex-valued polynomials, which does not affect the space dimension).
%\amnote{Do we need to explain that various spaces complex-valued and real-valued polynomials have the same complex/real dimension or is it obvious?}
Thus the dimension of $\IU^p(K)$ equals the dimension of the space of
harmonic polynomials of the same degree
in $n+1$ variables (cf.~\cite[eq.~(B.28)]{AndreaPhD}):
\begin{align}\label{eq:dimUp}
\dim\IU^p(K)
&=\dim\big\{P\in \IP^p(\IR^{n+1}),\; \Delta P=0\big\}\\
&=\binom{p+n-1}{n}\frac{2p+n}{p}
%=\frac{(2p+n)(p+n-1)!}{n!\;p!}
% =\calO_{p\to\infty}(p^{n})
=\begin{cases}
2p+1 & n=1,\\
(p+1)^2 & n=2,\\
\frac{(p+1)(p+2)(2p+3)}6 & n=3.
\end{cases}\nonumber
\end{align}

The key observation to construct elements of $\IU^p(K)$ is that, for any smooth $f:\IR\to\IR$ and any unit vector $\bd\in\IR^n$, $|\bd|=1$, the space--time field $f(\bd\cdot\bx-ct)$ is solution of the wave equation.
A basis for $\IU^p(K)$ is defined by choosing suitable directions $\bd$ and polynomial functions $f$.

For each  $k=0,\ldots,p$, we fix a one-variate polynomial $Q_k\in\IP^{k}(\IR)$ of degree exactly~$k$. 
Natural choices of $Q_k$ are (scaled and/or translated) monomials, Legendre and Chebyshev polynomials.
We want to find conditions on the directions $\bd_{k,j}\in\IR^n$, with $|\bd_{k,j}|=1$, such that the following fields constitute a basis for $\IU^p(K)$
\begin{align}\label{eq:bkj}
&b_{k,j}(\bx,t):=Q_k(\bd_{k,j}\cdot\bx-ct)%\in \IU^k(K)\subset\IU^p(K)
, \qquad k=0,\ldots,p,\quad j=1,\ldots, \dt k,\quad \text{where}
\\[2mm]
&\dt 0:=1,\nonumber\\[-4mm]
&\dt k:=\dim\IU^k(K)-\dim\IU^{k-1}(K)
% \hspace{40mm}k\in\IN \nonumber \\&
=\frac{2k+n-1}{k+n-1}\binom{k+n-1}k
% =\frac{2k+n-1}{n-1}\binom{k+n-2}{n-2}
=\begin{cases}
2 & n=1,\\
2k+1 & n=2,  \;k\ge1.\\
(k+1)^2 & n=3,
\end{cases}\nonumber
%\qquad=\calO_{k\to\infty}(k^{n-1}).
\end{align}
%For each polynomial degree $k=0,\ldots,p$, we need to choose exactly $\dt k$ directions $\bd_{k,j}$, so that, by comparing with \eqref{eq:dimUp}, if the corresponding $b_{k,j}$ are linearly independent then they are a basis of $\IU^p(K)$.
These functions (or their vector-valued analogues) are sometimes called ``transport polynomials'' \cite{PFT09,KSTW2014,KretzschmarPhD}, ``polynomial waves'' \cite{WTF14}, 
``polynomial plane waves'' \cite[eq.~(10)]{EKSTWtransparent}, ``Trefftz polynomials'' \cite{BGL2016}.
The next result gives an algebraic criterion on some matrices, which depend only on the directions $\bd_{j,k}$ and not on the specific polynomials $Q_k$, to characterise precisely which sets of directions lead to a basis.
%The next result characterises precisely which sets of directions lead to a basis, via an algebraic criterion on some matrices only depending on the directions and not on the specific polynomials $Q_k$.

\begin{prop}\label{prop:WaveBasis}
Fix a maximal polynomial degree $p\in\IN_0$, $Q_k\in\IP^{k}(\IR)$ of degree exactly $k$ for $k=0,\ldots,p$, and let $2\le n\in\IN$.
For $k=0,\ldots,p$, define the matrices $\bM^{(k)}\in \IC^{\dt k \times \dt k}$
$$%\bM^{(k)}\in \IC^{\dt k \times \dt k},\qquad
\bM_{\ell,m;j}^{(k)}:=Y_{\ell}^m(\bd_{k,j}) \quad 
\begin{cases}
0\le \ell\le k,\\
1\le m\le %\dim (\IH^\ell_h\Rn)=
\binom{\ell+n-2}{\ell}\frac{2\ell+n-2}{\ell+n-2},\\
1\le j\le \dt k, 
\end{cases}$$
where 
$\{Y_\ell^m\}_{\ell\in \IN_0;\, 1\le m\le \binom{\ell+n-2}{\ell}\frac{2\ell+n-2}{\ell+n-2}}$ 
are the usual (hyper)spherical harmonics, which are orthonormal in $L^2(\{\bd\in\IR^n,|\bd|=1\})$ (see e.g.\ \cite{MUL66} or \cite[\S B.4]{AndreaPhD}).
The indices $\ell$ and $m$ identify the matrix rows, while $j$
identifies the matrix columns.
% (Note that the matrices depend on the degree and the directions only, and not on the polynomials $Q_k$.)

Then the set $\calS:=\{b_{k,j}, \;0\le k\le p,\; 1\le j\le \dt k\}$ defined by \eqref{eq:bkj} is a basis of $\IU^p(K)$ if and only if all the $p+1$ matrices $\bM^{(k)}$
are invertible.
\end{prop}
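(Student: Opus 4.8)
The plan is to exploit the natural grading of $\IU^p(K)$ by polynomial degree and reduce the statement first to one about \emph{homogeneous} polynomial solutions, and then to a classical fact about harmonic polynomials and hyperspherical harmonics. First I would record the bookkeeping. Since the wave operator is homogeneous, every $U\in\IU^p(K)$ splits uniquely into homogeneous parts, so $\IU^p(K)=\bigoplus_{k=0}^p\IV^k(K)$, where $\IV^k(K)$ denotes the space of homogeneous polynomial solutions of degree exactly $k$; by the very definition of $\dt k$ in \eqref{eq:bkj} one has $\dim\IV^k(K)=\dim\IU^k(K)-\dim\IU^{k-1}(K)=\dt k$, hence $\dim\IU^p(K)=\sum_{k=0}^p\dt k=\#\calS$. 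Each $b_{k,j}$ is a transport polynomial, hence solves the wave equation, and has degree at most $p$, so $\calS\subset\IU^p(K)$ and it suffices to decide when $\calS$ is linearly independent.

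Next I would carry out the degree reduction. Because $Q_k$ has degree exactly $k$, with leading coefficient $q_k\ne0$, and $|\bd_{k,j}|=1$, the polynomial $b_{k,j}$ has degree exactly $k$, with degree-$k$ homogeneous part $q_k(\bd_{k,j}\cdot\bx-ct)^k\in\IV^k(K)$ and no components of degree larger than $k$. Fixing for each $k$ a basis $\calB_k$ of $\IV^k(K)$ and ordering both $\calB:=\calB_0\cup\cdots\cup\calB_p$ and $\calS$ by increasing degree, the (square) matrix expressing $\calS$ in $\calB$ is block upper-triangular with respect to the grading, its $k$-th diagonal block being exactly the $\dt k\times\dt k$ coordinate matrix of $\{q_k(\bd_{k,j}\cdot\bx-ct)^k\}_{j=1}^{\dt k}$ in $\calB_k$. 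Such a matrix is invertible iff all its diagonal blocks are, so, using $q_k\ne0$, $\calS$ is a basis of $\IU^p(K)$ iff for every $k\in\{0,\dots,p\}$ the family $\{(\bd_{k,j}\cdot\bx-ct)^k\}_{j=1}^{\dt k}$ is a basis of $\IV^k(K)$.

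It then remains to identify, for fixed $k$, when those transport monomials form a basis of $\IV^k(K)$, and this is where the hyperspherical harmonics enter. The complex substitution $(\bx,t)\mapsto(\bx,t/(\ri c))$ maps $\IV^k(K)$ (complexified) isomorphically onto the space of homogeneous harmonic polynomials of degree $k$ in the $n+1$ variables $(\bx,\tau)$, and sends $(\bd\cdot\bx-ct)^k$ to $(\bd\cdot\bx+\ri\tau)^k$, the $k$-th power of the isotropic vector $(\bd,\ri)\in\IC^{n+1}$. The classical expansion of such powers — separate $\bx=r\hat\bx$, expand $(r\,\bd\cdot\hat\bx+\ri\tau)^k$ in Gegenbauer polynomials of $\bd\cdot\hat\bx$, apply the addition theorem, and repackage into solid harmonics; see \cite{MUL66} and \cite[\S B.4]{AndreaPhD} — yields a basis $\{G_\ell^m\}$ of that space, indexed precisely by the pairs $(\ell,m)$ with $0\le\ell\le k$ and $1\le m\le\binom{\ell+n-2}{\ell}\frac{2\ell+n-2}{\ell+n-2}$, and constants $\kappa_{k,\ell}$ depending only on $k,\ell,n$ such that $(\bd\cdot\bx+\ri\tau)^k=\sum_{\ell,m}\kappa_{k,\ell}\,Y_\ell^m(\bd)\,G_\ell^m$. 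Consequently the coordinate column of $(\bd_{k,j}\cdot\bx-ct)^k$ in the pulled-back basis $\{G_\ell^m\}$ is $(\kappa_{k,\ell}Y_\ell^m(\bd_{k,j}))_{\ell,m}$, i.e.\ the $j$-th column of $\bM^{(k)}$ premultiplied by $\operatorname{diag}(\kappa_{k,\ell})$; once all $\kappa_{k,\ell}\ne0$, this coordinate matrix is invertible iff $\bM^{(k)}$ is, and combining with the previous step completes the proof.

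The main obstacle is exactly this last point: producing the expansion with the correct index range $0\le\ell\le k$ and, crucially, verifying that \emph{every} constant $\kappa_{k,\ell}$ in that range is nonzero — this is what makes the diagonal rescaling harmless, and it is the one place where the fine structure of hyperspherical harmonics (rather than mere dimension counts) is needed. Everything else is linear algebra: the homogeneity of the wave operator, the telescoping identity for $\dt k$, and the block-triangular change-of-basis argument.
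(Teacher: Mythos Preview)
Your argument is sound and the block--upper--triangular reduction to the homogeneous top parts $\{(\bd_{k,j}\cdot\bx-ct)^k\}_{j}$ is exactly the paper's Step~2, just phrased more cleanly. The genuine difference is in the analysis of the $k$-th diagonal block. You pass to harmonic polynomials in $n+1$ variables and expand the single isotropic power $(\bd\cdot\bx+\ri\tau)^k$ in a basis $\{G_\ell^m\}$, obtaining the coordinate matrix $\operatorname{diag}(\kappa_{k,\ell})\bM^{(k)}$; this is correct but, as you note, it leaves you with the nontrivial task of showing that every $\kappa_{k,\ell}$, $0\le\ell\le k$, is nonzero. The paper's route sidesteps this entirely: it never moves to $(n+1)$-variable harmonics but instead shows that $\sum_j A_j b_{k,j}=0$ is equivalent to the \emph{family} of conditions $\sum_j A_j(\bd_{k,j}\cdot\hat\bx)^\beta=0$ for all $0\le\beta\le k$ (using only $c_{k,k}\ne0$ and homogeneity in $\bx$), then replaces the monomials $z^\beta$ by the Legendre/Gegenbauer polynomials $P_\ell(z)$ --- a triangular change of basis with nonzero diagonal by definition --- and finally applies the addition formula for spherical harmonics on $S^{n-1}$. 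The only ``nonvanishing'' used is that $P_\ell$ has degree exactly $\ell$, so no analogue of your $\kappa_{k,\ell}\ne0$ ever arises.

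In short: your framework is correct and arguably tidier in its linear-algebra packaging, but the paper's computation is more elementary because working with \emph{all} powers $(\bd\cdot\hat\bx)^\beta$, $\beta\le k$, at once (rather than the single harmonic $(\bd\cdot\bx+\ri\tau)^k$) makes the passage to spherical harmonics an invertible triangular change of variables, eliminating precisely the obstacle you identify.
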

\begin{proof}
Since by \eqref{eq:dimUp} $\dim(\IU^p(K))=\sum_{k=0}^p\dt k$, it is enough to show that the elements of $\calS$ are linearly independent if and only if all matrices $\bM^{(k)}$, $0\le k\le p$, are invertible

\emph{Step 1.}
We first prove that the elements of $\calS_k:=\{b_{k,j}, \; 1\le j\le\dt k\}$ are linearly independent if and only if $\bM^{(k)}$ is invertible.

We denote by $c_{k,q}$, $0\le q\le k$, the coefficients of
$Q_k$, i.e.\ $Q_k(z)=\sum_{q=0}^k c_{k,q}z^q$.
To verify the independence of the elements of $\calS_k$, we set to zero a linear combination of them.
Expanding using the binomial theorem gives
\begin{align*}
0=\sum_{j=1}^{\dt k} A_j b_{k,j}(\bx,t)
&=\sum_{j=1}^{\dt k} \sum_{q=0}^k A_j c_{k,q}(\bd_{k,j}\cdot\bx-ct)^q\\
&=\sum_{j=1}^{\dt k} \; \sum_{\alpha,\beta\ge0,\; \alpha+\beta\le k}
A_j c_{k,\alpha+\beta}\binom{\alpha+\beta}\alpha   (-ct)^\alpha (\bd_{k,j}\cdot\bx)^\beta
\\
&=\sum_{\alpha=0}^k   (-ct)^\alpha  
\sum_{\beta=0}^{k-\alpha}c_{k,\alpha+\beta}\binom{\alpha+\beta}\alpha 
\sum_{j=1}^{\dt k} A_j (\bd_{k,j}\cdot\bx)^\beta
.\end{align*}
This is a polynomial in $t$, thus it is zero if and only if the coefficients of all powers of $t$ vanish:
\begin{align*}
0=\sum_{j=1}^{\dt k} A_j b_{k,j}(\bx,t) \quad \iff \quad
0&=
\sum_{\beta=0}^{k-\alpha}c_{k,\alpha+\beta}\binom{\alpha+\beta}\alpha 
\sum_{j=1}^{\dt k} A_j (\bd_{k,j}\cdot\bx)^\beta 
\quad
\forall0\le \alpha\le k.
\end{align*}
Every term in the sum over $\beta$ is a \emph{homogeneous} polynomial of degree $\beta$ in $\bx$, thus 
% \begin{align*}
% 0=\sum_{j=1}^{\dt k} A_j b_{k,j}(\bx,t) \quad \iff \quad
% 0&=\sum_{j=1}^{\dt k} A_j (\bd_{k,j}\cdot\bx)^\beta 
% \qquad\forall0\le \beta\le k.
% \end{align*}
\begin{align*}
0=\sum_{j=1}^{\dt k} A_j b_{k,j}(\bx,t) \quad \iff \quad
0&=c_{k,\alpha+\beta}\sum_{j=1}^{\dt k} A_j (\bd_{k,j}\cdot\bx)^\beta 
\qquad\forall0\le \alpha\le k;\; 0\le \beta\le k-\alpha.
\end{align*}
Since $c_{k,k}\ne0$, otherwise $Q_k$ would have degree lower than $k$, we fix $\alpha=k-\beta$ and obtain
\begin{align*}
0=\sum_{j=1}^{\dt k} A_j b_{k,j}(\bx,t) \quad \iff \quad
0&=\sum_{j=1}^{\dt k} A_j (\bd_{k,j}\cdot\bx)^\beta 
\qquad\forall0\le \beta\le k.
\end{align*}
Now we have to deal with homogeneous polynomials only, so we can restrict ourselves to the unit sphere, i.e.\ fix $\bx=\hat\bx$, $|\hat\bx|=1$.
Denote by $P_\ell(z)=\sum_{q=0}^\ell L_{\ell,q} z^q$ the Legendre polynomial of degree $\ell$.
We can sum over $\ell$ the monomials in $\bd_{k,j}\cdot\hat\bx$ multiplying them with the coefficients $L_{\ell,q}$ and use the hyperspherical harmonic addition formula \cite[(B.29)]{AndreaPhD} (from \cite[Theorem~2]{MUL66}) to get the equivalent formula
\begin{align*}
0=\sum_{j=1}^{\dt k} A_j b_{k,j}(\bx,t) \quad \iff \quad
0=&\sum_{q=0}^\ell L_{\ell,q}\sum_{j=1}^{\dt k} A_j (\bd_{k,j}\cdot\hat\bx)^q
=\sum_{j=1}^{\dt k} A_j P_\ell(\bd_{k,j}\cdot\hat\bx)\\
%\overset{\text{\cite[(B.29)]{AndreaPhD}}}
=&
\sum_{j=1}^{\dt k} A_j 
\frac{\abs{\{|\bd|=1\}}}{\dH\ell} 
%\frac{|\IW^{n-1}|}{\dH\ell} 
\sum_{m=1}^{\dH\ell} Y_{\ell}^m(\hat\bx)\conj{Y_{\ell}^m(\bd_{k,j})}\qquad \forall \ell=0,\ldots,k,
\end{align*}
where $\dH\ell:=\binom{\ell+n-2}{\ell}\frac{2\ell+n-2}{\ell+n-2}$ is the dimension of the space of homogeneous harmonic polynomials of degree $\ell$ in $n$ variables and $\abs{\{|\bd|=1\}}$ is the $(n-1)$-dimensional measure of the unit sphere in $\IR^n$.
% for all $\ell=1,\ldots,k$,
% where $P_\ell$ is the Legendre polynomial of degree $\ell$.
We multiply this by $r^\ell\frac{\dH\ell}{\abs{\{|\bd|=1\}}}$ with $r>0$ and sum over~$\ell$:
\begin{align*}
0=\sum_{j=1}^{\dt k} A_j b_{k,j}(\bx,t) \quad \iff \quad
0&=
\sum_{\ell=0}^k r^\ell
\sum_{j=1}^{\dt k} A_j 
\sum_{m=1}^{\dH\ell} Y_{\ell}^m(\hat\bx)\conj{Y_{\ell}^m(\bd_{k,j})} \\
% &=\sum_{\ell=0}^k \sum_{m=1}^{\dH\ell}r^\ell Y_{\ell}^m(\hat\bx)
% \sum_{j=1}^{\dt k} A_j \conj{Y_{\ell}^m(\bd_{k,j})} \\
&=\sum_{\ell=0}^k 
\sum_{m=1}^{\dH\ell}
B_{\ell,m}(\hat\bx,r)
(\conj{\bM^{(k)}} \vec A)_{\ell,m} \qquad \forall r>0,
\end{align*}
where the functions in 
$\{B_{l,m}(\hat\bx,r):=r^\ell Y_{\ell}^m(\hat\bx)\}_{0\le \ell\le k;\;
1\le m\le \binom{\ell+n-2}{\ell}\frac{2\ell+n-2}{\ell+n-2}}$
are by definition linearly independent as they are an orthogonal basis
of the space of harmonic polynomials 
in $\IR^n$ (if $r$ is thought as a radial coordinate).
Thus, the expression above is zero if and only if the result of the matrix--vector product 
$(\bM^{(k)} \vec A)_{\ell,m}$ is equal to $\bzero\in \IC^{\dt k}$.
If the matrix $\bM^{(k)}$ is invertible, then $\vec A=\bzero\in
\IC^{\dt k}$ and 
we have proved that
the polynomial waves in $\calS_k$ are linearly independent.
On the other hand, if the matrix $\bM^{(k)}$ is singular, then exists
$\vec0\ne\vec A\in \ker \bM^{(k)}$, i.e.\ there exists a linear combination of polynomial waves of $\calS_k$ identically equal to zero, which is the same as saying that these are not linearly independent.

\emph{Step 2.}
We now show that the first step of the proof implies the assertion.
One implication is trivial:
given $0\le k\le p$, if the elements of $\calS$ are linearly
independent, then the same property hold for all its subsets, in
particular for $\calS_k$ and it follows that $\bM^{(k)}$ is invertible.
Now assume all matrices $\bM^{(k)}$ are invertible, thus the elements of each subsets $\calS_k$ are linearly independent.
To verify the linear independence of the elements of $\calS$ we write
\begin{align*}
0&=\sum_{k=0}^p\sum_{j=1}^{\dt k} A_{k,j} b_{k,j}(\bx,t)
% =\sum_{j=1}^{J_K} A_{K,j}  b^Q_{K,j}(\bx,t)
% +\underbrace{\sum_{k=0}^{K-1}\sum_{j=1}^{J_k} A_{k,j}  b^Q_{k,j}(\bx,t)}_{\in \IP^{K-1}\Rn}
% \\&
=c_{p,p} %\underbrace{
\sum_{j=1}^{\dt p} A_{p,j} (\bd_{p,j}\cdot \bx-ct)^p%}_{\in \IP^K_h\Rn}
+%\underbrace{
R_{p-1}(\bx,t)%}_{\in \IP^{p-1}\Rn}
.\end{align*}
This is the sum of a homogeneous polynomial of degree $p$ and a polynomial ($R_{p-1}$) of degree $p-1$, thus it is zero if and only if both addends are zero.
Since $\bM^{(p)}$ is invertible, step 1 of the proof with the choice $Q_p(z)=z^p$ implies that
$\{(\bd_{p,j}\cdot \bx-ct)^p\}_{1\le j\le \dt p}$ are linearly independent, thus
$A_{p,j}=0$ for all $1\le j\le \dt p$.
Repeating the argument we see that also the $A_{p-1,j}$ coefficients are zero, and proceeding by backward induction we prove that all coefficients down to $A_{0,1}$ vanish.
This is saying that the elements of $\calS$ are linearly independent.
\end{proof}

From the proof of Proposition~\ref{prop:WaveBasis}, it is also clear that a finite set $\{b_{k,j},\ j\in J\subset\{1,\ldots\,d_k\}\}$ of polynomials in the form \eqref{eq:bkj}
is linearly independent if and only if the matrix formed by the columns of $\bM^{(k)}$ with indices in $J$ has full rank.

\begin{remark}[Two-dimensional case]
% For $n=2$ there are exactly $(p+1)^2$ linearly independent polynomial waves of degree at most $p$, or $(2k+1)$  linearly independent homogeneous polynomial waves of degree exactly~$k$.
For $n=2$ and $\ell>0$, the circular harmonics are
$Y_\ell^1(\cos\theta,\sin\theta)=\ee^{\ri\ell \theta}$ and
$Y_\ell^2(\cos\theta,\sin\theta)=\ee^{-\ri\ell \theta}$ thus $\bM_{\ell,m;j}^{(k)}=\ee^{(-1)^m \ri\ell\theta_{k,j}}$ 
(if $\bd_{k,j}=(\cos \theta_{k,j},\sin\theta_{k,j})$ are distinct directions), which is
product of a diagonal and a Vandermonde matrix, thus it is always invertible.
This implies that, in two space dimensions, $\calS$ is a basis of $\IU^p(K)$ for \emph{any} set of propagation directions  
$\{\bd_{k,j}, \;0\le k\le p,\; 1\le j\le\dt k\}$ with $\bd_{k,j}\ne \bd_{k,j'}$ for $j\ne j'$.
\end{remark}

\begin{remark}[Three-dimensional case]
If $n=3$, not all sets of distinct directions give linearly independent $b_{k,j}$.
An example is when too many directions are concentrated on the same circle of latitude.
Lemma 3.4.2 of \cite{AndreaPhD} describes some simple cases when $\bM^{(k)}$ are invertible (shown graphically in \cite[Fig.~1]{EKSTWtransparent}).
Moreover, Lemma 3.4.1 of \cite{AndreaPhD} ensures that ``generically'' these matrices are invertible.
Several algorithms to generate ``almost-equispaced'' unit directions in $\IR^3$ have been studied for different purposes, see e.g.\ \cite{SLW04,PTC14}. 
In particular, the directions computed in~\cite{SLW--} ensures invertibility of the matrices $\bM^{(k)}$ and upper bounds on the norms of their inverses are available, see \cite[Remark~3.4.7]{AndreaPhD}.
\end{remark}

\begin{remark}
Alternatively, one can construct a basis of $\IU^p(K)$ by evolving in time polynomial initial conditions as in \S\ref{s:TpBasis}.
The polynomial
$$
U(\bx,t)=\sum_{\malpha\in\IN_0^n,k\in \IN_0, |\malpha|+k\le p} a_{k,\malpha} \bx^\malpha t^k,
$$
with $ a_{k,\malpha}\in\IR$ belongs to $\IU^p(K)$ if and only if its coefficients satisfy the recurrence
\begin{align*}
a_{k,\malpha}&=\frac{c^2}{k(k-1)} \sum_{m=1}^n (\alpha_m+1)(\alpha_m+2)a_{k-2,\malpha+2\be_m}.
\end{align*}
Given a basis $\{\widetilde b_1,\ldots,\widetilde b_{\binom{p+n}n}\}$
of $\IP^{p}\Rn$ and a basis $\{\widehat b_1,\ldots,\widehat
b_{\binom{p-1+n}n}\}$ of $\IP^{p-1}\Rn$, one can define a basis of
$\IU^p(K)$ such that each element satisfies either
$U(\cdot,0)=\widetilde b_\ell$ and $\der Ut(\cdot,0)=0$ or
$U(\cdot,0)=0$ and $\der Ut(\cdot,0)=\widehat b_\ell$ for some $\ell$.
Again, bases constituted by monomials, Legendre or Chebyshev polynomials are allowed.
(Note that, from $\dim\IP^p\Rn=\binom{p+n}p$ and \eqref{eq:dimUp}, one has $\dim\IP^p\Rn+\dim\IP^{p-1}\Rn=\dim\IU^p(K)$.)

% A different formula for the solution of the Cauchy problem for the wave equation with polynomial data is given in Theorem~3 of \cite{MiWi56}.
% \amnote{Anything useful from this MiWi56?}

As in Theorem 3 of \cite{MiWi56}, the elements of this basis of $\IU^p(K)$ can also be written using iterated Laplacians as
\begin{align*}
\widetilde U_\ell(\bx,t)&=\sum_{j=0}^{\lceil \frac{p}2\rceil} 
\frac{\Delta^j \big(\widetilde b_\ell (\bx)\big) (ct)^{2j}}{(2j)!},\qquad 
\ell=1,\ldots, \binom{p+n}n,
\\
\widehat U_\ell(\bx,t)&=\sum_{j=0}^{\lceil \frac{p-1}2\rceil} 
\frac{\Delta^j \big(\widehat b_\ell (\bx)\big) (ct)^{2j+1}}{c(2j+1)!},\qquad 
\ell=1,\ldots, \binom{p-1+n}n.
\end{align*}
Equation (14) in \cite{MiWi56} (first discovered in \cite{MiWi55a}) gives an explicit formula for the case where $\widetilde b_\ell$ and $\widehat b_\ell$ are chosen as monomials.
\end{remark}
 
% \begin{remark}
% A different basis for $\IU^p(K)$, not relying on the choice of any directions, has been developed in \cite{MiWi55a} (see also subsequent papers by the same authors).
% \end{remark}

\begin{remark}\label{rem:NonPolyTrefftz}
The construction of the fields $b_{k,j}$ suggests possible non-polynomial discrete Trefftz spaces alternative to $\IU^p(K)$.
As proposed in \cite[\S3.1]{PFT09}, if one is interested in waves propagating with wavenumber close to a specific value $k$, then the basis functions $\sin(k(\bd\cdot\bx-ct))$ and  $\cos(k(\bd\cdot\bx-ct))$ might be used.
One can also think of using $f(\bd\cdot\bx-ct)$, with $f$ a Gaussian function or a compactly supported smooth function.
To the best of our knowledge, no non-polynomial Trefftz scheme for the time-domain wave equation has been described to date.
\end{remark}

\subsubsection{A basis of \texorpdfstring{$\IW^p(K)$}{Wp(K)}}
\label{s:SpBasis}

Bases of $\IW^p(K)$, as in \eqref{eq:ST}, can be defined as first-order derivatives of bases of $\IU^{p+1}(K)$:
\begin{align*}
\text{if} \quad \IU^{p+1}(K)=\spn\big\{b_j,\; j\in \calJ\big\}
\quad\text{then}\quad
\IW^{p}(K)=\spn\big\{(\textstyle\der{b_j}t,-\nabla b_j),\; j\in \calJ\big\}.
\end{align*}
The only elements of $\IU^{p+1}(K)$ not contributing to $\IW^p(K)$ are the constants, thus, by \eqref{eq:dimUp},
%$\dim\IW^p(K)=\dim\IU^{p+1}(K)-1$, cf.\ \eqref{eq:dimUp}.
\begin{align}\label{eq:dimSp}
\dim\IW^p(K)=\dim\IU^{p+1}(K)-1
&=\binom{p+n}{n}\frac{2p+n+2}{p+1}-1
% =\frac{(2p+n+2)(p+n)!}{n!\;(p+1)!}-1
% =\calO_{p\to\infty}(p^{n})
=\begin{cases}
2p+2 & n=1,\\
(p+1)(p+3) & n=2,\\
% (p+2)^2-1 & n=2,\\
\frac{(p+2)(p+3)(2p+5)}6-1 & n=3.
\end{cases}
\end{align}
For $P_k\in\IP^k(\IR)$ of degree exactly $k$, $k=0,\ldots,p$, and for $\dt k$ as in \eqref{eq:bkj}, the set
\begin{align*}
\Big\{(v_{k,j},\bsigma_{k,j})=\big(cP_k(\bd_{k,j}\cdot\bx-ct),\bd_{k,j}P_k(\bd_{k,j}\cdot\bx-ct)\big),
\quad k=0,\ldots,p,\; j=0,\ldots,\dt{k+1}
\Big\}
\end{align*}
is a basis of  $\IW^p(K)$, provided that the directions $\bd_{k,j}$ satisfy the conditions in Proposition~\ref{prop:WaveBasis}.
(This follows immediately by choosing $Q_{k}$ such that $Q_k'=-P_{k-1}$ and $Q_0=1$.)

\begin{remark}\label{rem:TvsS}
We can characterise the polynomials that are in $\IT^p(K)$ but not in $\IW^p(K)$.
We claim that $\IT^p(K)$ can be decomposed as the direct sum of $\IW^p(K)$ and the space of the  fields $(0,\bsigma)$ where $\bsigma\in\IP^{p}(\IR^n)^n$ are time-independent, divergence-free and not gradient of harmonic polynomials of degree $p+1$.
It is clear that these fields belong to $\IT^p(K)\setminus\IW^p(K)$.
Then the dimension of the space of the divergence-free fields in $\IP^{p}(\IR^n)^n$ is $n\binom{p+n}n-\binom{p+n-1}n$, and that of its subspace of gradients of harmonic polynomials of degree $p+1$ is $\binom{p+n-1}{n-1}\frac{2p+n+1}{p+1}-1$ (see \cite[eq.~B.28]{AndreaPhD}).
Comparing with \eqref{eq:dimTp} and \eqref{eq:dimSp}, we see that $\dim\IT^p(K)-\dim\IW^p(K)=\binom{p+n}n\frac{pn-p-1}{p+1}+1$, which equals the dimension of the divergence-free polynomials in $\IP^{p}(\IR^n)^n$ that are not gradients.
(Recall that in Remark~\ref{rem:IBVP1toIBVP2} we derived a similar decomposition of the solution of a first-order acoustic wave IBVP as sum of a solution of a second-order IBVP and a 
divergence-free, time-independent field.)
\end{remark}

\begin{remark}
The bases used in previous works on Trefftz method for time-domain wave problems are related to those described so far.
In particular,
\cite[eq.~(5.7)]{SpaceTimeTDG},
\cite[eq.~(56)]{LiK16},
\cite[eq.~(13)]{PFT09}, 
\cite[eq.~(9)]{WTF14}, and 
\cite[\S5]{BGL2016} (where the approximation properties of $\IU^p(K)$ are investigated)
% \ipnote{Fanno anche best approximation estimates in $h$ (e $hp$ in $(1+1)d$)...}
use the basis in \eqref{eq:bkj} with monomials as $Q_k$'s or $P_k$'s;
% (but in the numerical computations of \cite{SpaceTimeTDG} Legendre polynomials have been used as $P_k$'s), 
\cite[\S4.3]{KretzschmarPhD} and \cite[eq.~(10)]{EKSTWtransparent} adapt the same basis to the Maxwell equations.
On the other hand, \cite[Remark~3.7]{EKSW15} and \cite[\S4.2]{KretzschmarPhD} describe recurrence relations similar to \eqref{eq:RecurrenceTp} for Maxwell's equations.
% first-order including div-constraint  
% Lehel use Sp type of basis
%\amnote{I believe Fritz used Legendre polynomials, need to check. 
%\ip{I checked in their papers, but I didn't find anything on Legendre. In our paper, they wrote that for the {\em non Trefftz} bases they use tensor product of Legengre polynomials.}
%Yes, but not sure about what they did in the code... 1D Matlab code from Sascha has only monomials, 3D C++ code of Fritz has several functions called LegendreBlahBlah. 
%--- I've dropped the sentence...}
\end{remark}

\subsubsection{Approximation theory for \texorpdfstring{$\IW^p(K)$}{Wp(K)}}
\label{s:SpApprox}

The following approximation result for $\IU^p(K)$ follows immediately along the lines of the proof of Corollary~\ref{cor:ApproxTp}, by applying Lemma~\ref{lem:Taylor} to the second-order wave equation, with $N=n+1$ and $N'=N''=1$.
% \amnote{No powers of c? Don't think so. Move definition of seminorm together with the other one?}

\begin{cor}\label{cor:ApproxUp}
For $p,s\in\IN_0$, and for constant $c>0$, let $U\in H^{s+1}(K)$ be a solution of  $ -\Delta U+\frac1{c^2}\dersec U t=0$ in a domain $K\subset\IR^{n+1}$ that satisfies Assumption~\ref{as:StarShaped}.
Then, there exists $U_p\in \IU^p(K)$ such that, for all $0\le j\le  m:=\min\{p,s\}$,
\begin{align*}
\abs{U-U_p}_{\widehat H^j_c(K)}\leq
%\am{c^{-1/2????}} 
2\binom{n+j}n%(1+j)^{n}
\frac{(n+1)^{m+1-j}}{(m-j)!} 
\frac{h_K^{m+1-j}}{\rho^{(n+1)/2}}\abs{U}_{\widehat H^{m+1}_c(K)},
\end{align*}
where $\abs{V}_{\widehat H^k_c(K)}^2:=
\sum_{\malpha\in\IN_0^n, \alpha_t\in\IN_0,|\malpha|+\alpha_t=k}
\N{c^{1/2-\alpha_t}D^{\malpha,\alpha_t} V}_{L^2(K)}^2$ for all $V\in H^k(K)$, $k\in\IN_0$,
cf.~\eqref{eq:HjcNorm}.
\end{cor}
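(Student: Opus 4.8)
The plan is to carry over the argument of Corollary~\ref{cor:ApproxTp}, now applied to the scalar second-order wave equation in place of the first-order system. Concretely, I would invoke Lemma~\ref{lem:Taylor} with $N=n+1$, $N'=N''=1$, and the homogeneous, constant-coefficient differential operator of degree $d=2$ given by $\calL U:=-\Delta U+c^{-2}\dersec Ut$, for which $\IU^p(K)$ is, by definition (see \eqref{eq:UpK}), the space of polynomial solutions of degree at most $p$.

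First I would remove the wave speed by the anisotropic rescaling $\widetilde U(\bx,s):=U(\bx,s/c)$, defined on $\widetilde K:=\{(\bx,s):(\bx,s/c)\in K\}$; a direct computation shows that $\widetilde U$ solves the unit-speed wave equation $-\Delta\widetilde U+\dersec{\widetilde U}s=0$ in $\widetilde K$. I would then verify the geometric hypotheses of Lemma~\ref{lem:Taylor}: the change of variables $s=ct$ maps the ellipsoid $\{|\bx-\bx_K|^2+c^2(t-t_K)^2<\rho^2h_K^2\}$ of Assumption~\ref{as:StarShaped} onto the Euclidean ball of radius $\rho h_K$ centred at $(\bx_K,ct_K)\in\widetilde K$, and, by the very definition of the anisotropic diameter, $\diam(\widetilde K)=h_K$; hence $\widetilde K$ is star-shaped with respect to that ball, with the same $\rho\in(0,1/2]$.

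Next, with $m:=\min\{p,s\}$, I would apply Lemma~\ref{lem:Taylor} to $\widetilde U$ with order parameter $m+1$: since $U\in H^{s+1}(K)$ gives $\widetilde U\in H^{s+1}(\widetilde K)\subset H^{m+1}(\widetilde K)$, part~(ii) ensures that the averaged Taylor polynomial $Q^{m+1}[\widetilde U]$ has degree at most $m\le p$ and again solves the unit-speed wave equation, i.e.\ $Q^{m+1}[\widetilde U]\in\IU^p(\widetilde K)$, while part~(iii) yields, for all $0\le j\le m$,
\[
\abs{\widetilde U-Q^{m+1}[\widetilde U]}_{H^j(\widetilde K)}
\le 2\binom{n+j}{n}\frac{(n+1)^{m+1-j}}{(m-j)!}\,\frac{h_K^{m+1-j}}{\rho^{(n+1)/2}}\,
\abs{\widetilde U}_{H^{m+1}(\widetilde K)}.
\]
Finally I would set $U_p(\bx,t):=Q^{m+1}[\widetilde U](\bx,ct)\in\IU^p(K)$ and undo the rescaling: comparing with the definition of $\abs{\cdot}_{\widehat H^k_c(K)}$ in \eqref{eq:HjcNorm}, the Jacobian $\di t=c^{-1}\di s$ combines with the weight $c^{1/2-\alpha_t}$ and with the factor $c^{\alpha_t}$ produced by $t$-differentiation of $\widetilde U(\bx,ct)$ to give exactly $\abs{V}_{\widehat H^k_c(K)}=\abs{\widetilde V}_{H^k(\widetilde K)}$ for every $V\in H^k(K)$ and every $k\in\IN_0$; using this with $V=U$ and with $V=U-U_p$ converts the displayed inequality into the claimed bound.

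There is no substantial obstacle here: the statement is genuinely immediate once Lemma~\ref{lem:Taylor} is available, exactly as in Corollary~\ref{cor:ApproxTp}. The only point demanding some care is the bookkeeping of the anisotropic scaling — checking simultaneously that the ellipsoid of Assumption~\ref{as:StarShaped} becomes a ball of radius exactly $\rho h_K$, that $h_K$ equals the Euclidean diameter of $\widetilde K$, and that the powers of $c$ built into the weighted seminorm $\abs{\cdot}_{\widehat H^k_c(K)}$ are precisely those for which the Jacobian factor $c^{-1}$ cancels for each multi-index $(\malpha,\alpha_t)$ with $|\malpha|+\alpha_t=k$.
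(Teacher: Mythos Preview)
Your proposal is correct and follows exactly the approach the paper indicates: the paper states that the result ``follows immediately along the lines of the proof of Corollary~\ref{cor:ApproxTp}, by applying Lemma~\ref{lem:Taylor} to the second-order wave equation, with $N=n+1$ and $N'=N''=1$,'' which is precisely the rescale-to-unit-speed, apply-averaged-Taylor, and scale-back argument you carry out.
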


\begin{theorem}\label{thm:ConvergenceSp}
Under the assumptions on $(v,\bsigma)$ and $\calT_h$ stipulated in Theorem~\ref{thm:ConvergenceTp}, suppose there exists $U$ defined in $Q$ such that $v=\der U t$ and $\bsigma=-\nabla U$ in $Q$.
Let $(\Vhp,\Shp)$ be the solution of the Trefftz-DG formulation with $\bT\Th=\IWp\Th$.
Then, with $m_K:=\min\{p_K, s_K\}$,
\begin{align}\nonumber
\Tnorm{(v-\Vhp,\bsigma-\Shp)}\DG
&\le (1+C_c)\rho^{-n/2-1}
\sum_{K\in\calT_h}\xi_K^{1/2} C'_{(m_K,n)} h_K^{m_K+1/2}
\abs{(v,\bsigma)}_{H^{m_K+1}_c(K)},\\
\text{where }\quad&
C'_{(m_K,n)}=
\begin{cases}
\frac{2(n+2)(n+1)^{m_K+\frac32}}{(m_K-1)!}
%7\frac{3^{n}(n+1)^{m_K}}{(m_K-1)!}
& \text{if } m_K\ge1,\\
2\sqrt{n+2}(n+1)^2+\sqrt2& \text{if } m_K=0.\\
\end{cases}
\label{eq:thm:ConvergenceSp}
\end{align}
\end{theorem}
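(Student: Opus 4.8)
The plan is to mirror exactly the structure of the proof of Theorem~\ref{thm:ConvergenceTp}, replacing the approximation estimate for $\IT^p(K)$ by the one for $\IW^p(K)$. The starting point is the quasi-optimality bound \eqref{eq:QuasiOpt} in Theorem~\ref{th:QO}, which tells us that the Galerkin error in $\Tnorm{\cdot}\DG$ norm is controlled, up to the factor $(1+C_c)$, by the best-approximation error in $\Tnorm{\cdot}\DGp$ norm over the discrete space $\bVp\Th=\IWp\Th$. Then I would invoke the bound \eqref{eq:DG+<H1}, established inside the proof of Theorem~\ref{thm:ConvergenceTp}, which says that for any $(w,\btau)\in H^1\Th^{1+n}$,
\[
\N{(w,\btau)}\DGp^2\le \frac1\rho\sum_{K\in\calT_h}\xi_K
\bigg(\frac{n+2}{h_K}\N{(w,\btau)}^2_{L^2_c(K)}+h_K \abs{(w,\btau)}^2_{H^1_c(K)}\bigg),
\]
applied to the approximation error $(w,\btau)=(v-\Vhp,\bsigma-\Shp)$. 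Note this step is completely independent of whether we use $\IT^p$ or $\IW^p$; it only uses trace inequalities (Lemma~\ref{lem:Trace}) and the scaling properties of the norms.

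The key new ingredient is to supply an element of $\IW^p(K)$ that approximates $(v,\bsigma)=(\der Ut,-\nabla U)$ with the correct orders in $h_K$ and $j=0,1$. For this I would use Corollary~\ref{cor:ApproxUp}: there exists $U_p\in\IU^{p_K+1}(K)$ with $\abs{U-U_p}_{\widehat H^j_c(K)}$ bounded, for $0\le j\le m_K+1$, by the stated constant times $h_K^{m_K+2-j}\abs{U}_{\widehat H^{m_K+2}_c(K)}/\rho^{(n+1)/2}$. Setting $(w\hp,\btau\hp):=(\der{U_p}t,-\nabla U_p)\in\IW^{p_K}(K)$, one has $(v-w\hp,\bsigma-\btau\hp)=(\der{(U-U_p)}t,-\nabla(U-U_p))$, so that each first-order space-time derivative of $(v-w\hp,\bsigma-\btau\hp)$ is a second-order space-time derivative of $U-U_p$. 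Hence $\abs{(v-w\hp,\bsigma-\btau\hp)}_{H^j_c(K)}$ is controlled by $\abs{U-U_p}_{\widehat H^{j+1}_c(K)}$, and in turn $\abs{(v,\bsigma)}_{H^{m_K+1}_c(K)}$ is comparable to $\abs{U}_{\widehat H^{m_K+2}_c(K)}$; here I would carefully track the relation between the $H^j_c$ norm of the first-order pair and the $\widehat H^{j+1}_c$ norm of $U$, using definition \eqref{eq:HjcNorm}. Applying this with $j=0$ and $j=1$, then substituting into \eqref{eq:DG+<H1} and \eqref{eq:QuasiOpt}, yields
\[
\Tnorm{(v-\Vhp,\bsigma-\Shp)}\DG^2\le (1+C_c)^2\frac1{\rho^{n+2}}\sum_{K\in\calT_h}\xi_K\,C''_{(m_K,n)} h_K^{2m_K+1}\abs{(v,\bsigma)}_{H^{m_K+1}_c(K)}^2,
\]
and taking square roots and using $\sqrt{a+b}\le\sqrt a+\sqrt b$ gives the asserted sum form with the constant $C'_{(m_K,n)}$.

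I expect the only delicate bookkeeping — and hence the main obstacle — to be tracking the precise constant $C'_{(m_K,n)}$, in particular getting the power of $(n+1)$ and the combinatorial factor $\binom{n+j}n$ right when passing from the approximation estimate for $U$ (which involves $m_K+1$ in place of $m_K$, because $U_p\in\IU^{p_K+1}(K)$) to the estimate for the pair $(v,\bsigma)$, and then combining the $j=0$ term (carrying $\sqrt{n+2}/h_K$) with the $j=1$ term. For the case $m_K=0$ one argues exactly as in Theorem~\ref{thm:ConvergenceTp}: for any $(w\hp,\btau\hp)\in\IW^0(K)$ (whose elements are of the form $(c P_0(\bd\cdot\bx-ct),\bd P_0(\bd\cdot\bx-ct))$ with $P_0$ constant, i.e.\ constant pairs up to the transport structure) one has $\abs{(v-w\hp,\bsigma-\btau\hp)}_{H^1_c(K)}=\abs{(v,\bsigma)}_{H^1_c(K)}$, and a direct application of \eqref{eq:DG+<H1} with the $L^2_c$ term estimated via Corollary~\ref{cor:ApproxUp} at $j=0$ gives the stated $m_K=0$ constant $2\sqrt{n+2}(n+1)^2+\sqrt2$. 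All the rest is routine, since coercivity, continuity, the trace inequality and the scaling identities have already been set up.
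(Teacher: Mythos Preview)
Your proposal is correct and follows essentially the same approach as the paper: quasi-optimality \eqref{eq:QuasiOpt}, the trace-to-volume bound \eqref{eq:DG+<H1}, Corollary~\ref{cor:ApproxUp} applied to $U$ with $m=m_K+1$ (so $U_p\in\IU^{p_K+1}(K)$ and $(\partial_t U_p,-\nabla U_p)\in\IW^{p_K}(K)$), and the norm comparison between $|(\partial_t V,-\nabla V)|_{H^{k-1}_c(K)}$ and $|V|_{\widehat H^k_c(K)}$. The paper makes the last step explicit via the two-sided inequality $|V|_{\widehat H^k_c(K)}^2\le |(\partial_t V,-\nabla V)|_{H^{k-1}_c(K)}^2\le \min\{n+1,k\}\,|V|_{\widehat H^k_c(K)}^2$, which is exactly the bookkeeping you flag as the only delicate point; note also a slip in your write-up where you apply \eqref{eq:DG+<H1} to $(v-\Vhp,\bsigma-\Shp)$ rather than to the best-approximation difference $(v-\partial_t U_p,\bsigma+\nabla U_p)$.
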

\begin{proof}
Using the norms defined in Corollary \ref{cor:ApproxUp} and \eqref{eq:HjcNorm}, for all elements $K\in\calT_h$, $k\in\IN$, $V\in H^k(K)$ we have
% \amnote{Term $D^{\malpha,\alpha_t}V$ can appear at most $n+1$ times in non-hat seminorm squared, coming from $v$ and each component of $\bsigma$.}
\begin{align}\label{eq:HjcNormComparison}
\abs{V}_{\widehat H^k_c(K)}^2\le \abs{\Big(\der Vt, -\nabla V\Big)}_{H^{k-1}_c(K)}^2
\le \min\{n+1,k\}\abs{V}_{\widehat H^k_c(K)}^2.
\end{align}
Define $U\hp\in L^2(Q)$ such that ${U\hp}_{|_K}\in\IU^{p_K}(K)$ coincides with the approximation of $U$ given by Corollary~\ref{cor:ApproxUp} with $m=m_K+1$.
If $m_K\ge1$ for all elements, we combine the previous results to obtain the assertion:
\begin{align*}
\Tnorm{(v-\Vhp,\bsigma-&\Shp)}\DG^2
\overset{\eqref{eq:QuasiOpt}}\le
(1+C_c)^2 \Tnorm{(v-\partial {U\hp}/\partial t,\bsigma+\nabla U\hp)}\DGp^2
\\&\overset{\eqref{eq:DG+<H1}}\le
(1+C_c)^2 \frac1\rho 
\sum_{K\in\calT_h}\xi_K
\bigg(\frac{n+2}{h_K}\N{\Big(v-\der {U\hp} t,\bsigma+\nabla U\hp\Big)}^2_{L^2_c(K)}\\
&\hspace{40mm}+h_K \abs{\Big(v-\der {U\hp} t,\bsigma+\nabla U\hp\Big)}^2_{H^1_c(K)}
\bigg)
\\&\overset{\eqref{eq:HjcNormComparison}}\le
(1+C_c)^2 \frac1\rho 
\sum_{K\in\calT_h}\xi_K
\bigg(\frac{n+2}{h_K}\abs{U-U\hp}^2_{\widehat H^1_c(K)}
+2h_K \abs{U-U\hp}^2_{\widehat H^2_c(K)}
\bigg)
\\&\le
(1+C_c)^2 \frac{4(n+2)^2}{\rho^{n+2}} 
\sum_{K\in\calT_h}\xi_K
\frac{(n+1)^{2m_K+3}}{(m_K-1)!^2}
h_K^{2m_K+1}\abs{U}^2_{\widehat H^{m_K+2}_c(K)},
\end{align*}
and we conclude by using again \eqref{eq:HjcNormComparison}.
The proof for the case $m_K=0$ follows similarly.
% \amnote{section to be double checked\\
% bound on silly coefficients is not very good but simplest possible, 6 could be $\frac23\sqrt{82}\approx $..., max achieved for $n=2$, see handwritten notes 7.6.2016
% %n=1:5; (n+2).*(n+1).^2.*(2/3).^(2*n)
% }
%\amnote{double check...}
\end{proof}

\begin{rem}
Under the same assumption of Remark~\ref{rem:SimpleBoundTp} ($\alpha=\beta^{-1}=c^{-1}$, $\delta=1/2$, $\vartheta=1$, 
$\gamma=0$, %$\gamma\le 3/5$, 
$m_K\ge1$, $n=2$ or $n=3$),
one obtains
$C'_{(m_K,n)}\le 40 (n+1)^{m_K+1/2}/(m_K-1)!$ and
bound \eqref{eq:thm:ConvergenceSp} simplifies to
\begin{align*}
\Tnorm{(v-\Vhp,\bsigma-\Shp)}\DG
&\le \am{120}\sqrt3\rho^{-n/2-1}\sum_{K\in\calT_h} \frac{\big((n+1)h_K\big)^{m_K+1/2}}{(m_K-1)!}
\abs{(v,\bsigma)}_{H^{m_K+1}_c(K)}.
\end{align*}
\end{rem}

\section*{Acknowledgements}
\addcontentsline{toc}{section}{Acknowledgements}
The authors are grateful to Blanca Ayuso de Dios, Thomas Hagstrom, Joachim Sch\"oberl and Endre S\"uli for stimulating discussions in relation to this work.

I. Perugia has been funded by the Vienna Science and Technology Fund
  (WWTF) through the project MA14-006, and by the Austrian Science
  Fund (FWF) through the grants P~29197-N32 and F~65.
%She also acknowledges support of the Italian Ministry of Education, University and Research (MIUR) through the project PRIN-2012HBLYE$4\_001$.

\bibliographystyle{siam}
\bibliography{references_andrea} 
\addcontentsline{toc}{section}{References}
\end{document}